\newtheorem*{theorem*}{Theorem}
\newtheorem{theorem}{Theorem}[section]
\newtheorem{proposition}[theorem]{Proposition}
\newtheorem{lemma}[theorem]{Lemma}
\newtheorem{definition}[theorem]{Definition}
\newtheorem{corollary}[theorem]{Corollary}
\theoremstyle{definition} \newtheorem*{remark*}{Remark}
\theoremstyle{theorem} \newtheorem*{conjecture*}{Conjecture}
\theoremstyle{definition} 
\theoremstyle{definition} \newtheorem{example}[theorem]{Example}
\theoremstyle{theorem} 
\begin{document}
\title{Rack homology and conjectural Leibniz homology}
\author{Simon Covez}
%\address{Newton institute}
\subjclass[2010]{17A32, 20N99}
\keywords{Leibniz algebra, Zinbiel algebra, rack, cubical set}

%\tableofcontents

\begin{abstract} This article presents results being consistent with conjectures of J.-L. Loday about the existence and properties of a \textit{Leibniz homology} for groups. Introducing \textit{$\mathbf{L}$-sets} we prove that \textit{(pointed) rack homology} has properties this conjectural Leibniz homology should satisfy, namely the existence of a coZinbiel coalgebra structure on rack homology and the existence of a non trivial natural cocommutative coalgebra morphism from the rack homology of a group to its Eilenberg-MacLane homology. The end of the paper treats the particular cases of the linear group and of abelian groups. We prove the existence of a \textit{connected coZinbiel-associative bialgebra} structure on their rack homology.
\end{abstract}

\maketitle

\section*{Introduction}
\subsection*{Chevalley-Eilenberg homology and Leibniz homology} The Chevalley-Eilenberg homology is the natural homology theory associated to Lie algebras (cf. \cite{CartanEilenberg}). The Koszul dual of the operad $\mathcal{L}ie$ encoding Lie algebras is the operad $\mathcal{C}om$ encoding commutative algebras. Therefore the Chevalley-Eilenberg homology of a Lie algebra is naturally provided with a cocommutative coalgebra structure (cf. \cite{LodayVallette}).
\begin{align*}
\mathrm{H}_{\bullet}(-,\Bbbk) : \mathbf{Lie} \to \mathbf{Com}^c
\end{align*}
Given a Lie algebra $\mathfrak{g}$ this homology theory is the homology of a chain complex whose underlying graded vector space is the exterior algebra $\Lambda(\mathfrak{g})$. A fondamental remark due to J.-L. Loday is that using the antisymmetry of the Lie bracket it is possible to rewrite the differential in such a way that the relation $\mathrm{d}^2 = 0$ is a consequence of the \textit{Leibniz relation} only.
\begin{align*}
[[x,y],z] = [x,[y,z]] + [[x,z],y]
\end{align*}
As a consequence the differential on the Chevalley-Eilenberg chain complex lifts up to a differential $\mathrm{dL}$ on the tensor vector space $\mathrm{T}(\mathfrak{g})$. It defines a new chain complex $(\mathrm{T}(\mathfrak{g}),\mathrm{dL})$, and so a new homology theory called \textit{Leibniz homology} (cf. \cite{LodayCyclic}). Because the Leibniz relation is the only relation involved in the definition of $\mathrm{dL}$, this complex $(\mathrm{T}(\mathfrak{g}),\mathrm{dL})$ is defined for a category of algebras containing the category of Lie algebras. These algebras have been dubbed \textit{Leibniz algebras} by J.-L. Loday. More precisely a Leibniz algebra is a vector space $\mathfrak{g}$ provided with a bilinear map $[-,-] : \mathfrak{g} \times \mathfrak{g} \to \mathfrak{g}$ called Leibniz bracket (or shortly bracket) satisfying the Leibniz relation. The operad $\mathcal{L}eib$ encoding Leibniz algebras being Koszul dual to the operad $\mathcal{Z}inb$ encoding Zinbiel \footnote{Zinbiel algebras are Koszul dual to Leibniz algebras and are sometimes called dual Leibniz algebras; the word Zinbiel is Leibniz spelled backward} algebras, the Leibniz homology of a Leibniz algebra is naturally provided with a coZinbiel coalgebra structure.
\begin{align*}
\mathrm{HL}_{\bullet}(-,\Bbbk) : \mathbf{Leib} \to \mathbf{Zinb^c}
\end{align*}
At the operad level the morphism $\mathcal{L}eib \to \mathcal{L}ie$ induces a morphism $\mathcal{C}om \to \mathcal{Z}inb$, so there is a commutative diagram:
$$
\xymatrix{
\mathbf{Leib} \ar[rr]^{\mathrm{HL}_{\bullet}(-,\Bbbk)} & & \mathbf{Zinb}^c \ar[r] & \mathbf{Com}^c
\\
\mathbf{Lie} \ar@{^{(}->}[u] \ar[rrru]_{\mathrm{H}_{\bullet}(-,\Bbbk)}
}
$$
Moreover the canonical projection $\mathrm{T}(\mathfrak{g}) \twoheadrightarrow \Lambda(\mathfrak{g})$ induces a natural map of cocommutative algebras in homology $\mathrm{HL}_{\bullet}(\mathfrak{g},\Bbbk) \to \mathrm{H}_{\bullet}(\mathfrak{g},\Bbbk)$ fitting into a long exact sequence.
\begin{align*}
\cdots \to \mathrm{H}_{n}^{rel}(\mathfrak{g},\Bbbk) \to \mathrm{HL}_{n}(\mathfrak{g},\Bbbk) \to \mathrm{H}_{n}(\mathfrak{g},\Bbbk) \to \mathrm{H}_{n}^{rel}(\mathfrak{g},\Bbbk) \to \cdots 
\end{align*}
\subsubsection*{Chevalley-Eilenberg homology and Leibniz homology of abelian Lie algebras \textnormal{(cf. \cite{LodayEns})}} The Chevalley-Eilenberg and Leibniz differentials of an abelian Lie algebra are trivial. Therefore $\mathrm{H}_{n}(\mathfrak{g},\Bbbk) \simeq \Lambda^n(\mathfrak{g})$ and $\mathrm{HL}_{n}(\mathfrak{g},\Bbbk) \simeq \mathrm{T}^n(\mathfrak{g})$ for all $n \in \mathbb{N}$, and the morphism from Leibniz homology to Chevalley-Eilenberg homology is the canonical projection $\mathrm{T}(\mathfrak{g}) \twoheadrightarrow \Lambda(\mathfrak{g})$.

\subsubsection*{Chevalley-Eilenberg homology and Leibniz homology of the Lie algebra of matrices $\mathfrak{gl}(A)$ \textnormal{(cf. \cite{LodayCyclic})}} Given a unital associative algebra $A$ over a field of characteristic $0$, the Chevalley-Eilenberg and the Leibniz homologies of the Lie algebra of matrices $\mathfrak{gl}(A)$ are provided with more structure. 
\par
There is a product on $\mathfrak{gl}(A)$, called the direct sum of matrices, which induces a connected commutative graded Hopf algebra structure on the Chevalley-Eilenberg homology of $\mathfrak{gl}(A)$. As a consequence if we combine the Hopf-Borel theorem and the Loday-Quillen-Tsygan theorem, then we obtain the following isomorphism of Hopf algebras
\begin{align*}
\mathrm{H}_{\bullet}(\mathfrak{gl}(A),\Bbbk) \simeq \mathrm{S}\big(\mathrm{HC}_{\bullet-1}(A)\big)
\end{align*}
where $\mathrm{HC}_{\bullet}(A)$ is the \textit{cyclic homology} of $A$. 
\par
The direct sum of matrices induces a connected graded coZinbiel-associative bialgebra structure on the Leibniz homology of $\mathfrak{gl}(A)$. There exists a structure theorem for such type of bialgebras (cf. \cite{Burgunder_graph_complex_and_Leibniz_homology}). As a consequence if we combine this structure theorem and the Loday-Cuvier theorem, then we obtain the following isomorphism of coZinbiel-associative bialgebras
\begin{align*}
\mathrm{HL}_{\bullet}(\mathfrak{gl}(A),\Bbbk) \simeq \mathrm{T}\big(\mathrm{HH}_{\bullet-1}(A)\big)
\end{align*} 
where $\mathrm{HH}_{\bullet-1}(A)$ is the \textit{Hochschild homology} of $A$.

\subsection*{Eilenberg-MacLane homology and conjectural Leibniz homology for groups} The \textit{Eilenberg-MacLane homology} is the natural homology theory associated to groups (cf. \cite{CartanEilenberg}). This homology theory is naturally associated with a cocommutative coalgebra structure.
\begin{align*}
\mathrm{H}_{\bullet}(-,\Bbbk) : \mathbf{Grp} \to \mathbf{Com}^c
\end{align*}
Lie algebras being the linearized objects associated to groups, the existence and properties of the Leibniz homology theory for Lie algebras and Leibniz algebras led J.-L. Loday to state this conjecture.
\begin{conjecture*}[J.-L. Loday \cite{LodayEns,LodayConjectural}] There exists a \textnormal{Leibniz homology theory} defined for groups which is naturally endowed with a coZinbiel coalgebra structure. 
\begin{align*}
\mathrm{HL}_{\bullet}(-,\Bbbk) : \mathbf{Grp} \to \mathbf{Zinb}^c
\end{align*}
This Leibniz homology is related to the usual group homology by a natural morphism of cocommutative algebras.
\begin{align*}
\mathrm{HL}_{\bullet}(G,\Bbbk) \to \mathrm{H}_{\bullet}(G,\Bbbk)
\end{align*}
This Leibniz homology is the natural homology theory of mathematical objects called \textnormal{coquecigrues} whose groups carry naturally the structure.
$$
\xymatrix{
\mathbf{Coquecigrues} \ar[rr]^{\quad \mathrm{HL}_{\bullet}(-,\Bbbk)} & & \mathbf{Zinb}^c \ar[r] & \mathbf{Com}^c 
\\
\mathbf{Grp} \ar@{^{(}->}[u] \ar[rrru]_{\quad \mathrm{H}_{\bullet}(-,\Bbbk)} & &
}
$$
\end{conjecture*}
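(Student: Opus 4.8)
The plan is to answer the conjecture constructively, by exhibiting an explicit functor that plays the role of $\mathrm{HL}_\bullet(-,\Bbbk)$ and then verifying the two structural properties it is required to satisfy. The candidate is \emph{(pointed) rack homology} applied to the conjugation rack of a group. Recall that every group $G$ carries a canonical rack structure $\mathrm{Conj}(G)$ with operation $x \triangleright y = y^{-1} x y$, and that the self-distributivity axiom of a rack is exactly the group-theoretic shadow of the Leibniz relation, so that racks are to groups what Leibniz algebras are to Lie algebras. Rack homology is the homology of an explicit chain complex $C_\bullet(X) = \Bbbk[X^{\times \bullet}]$ whose differential is an alternating sum of face maps built from $\triangleright$; passing to the pointed (degenerate-free) quotient gives a functor $\mathbf{Grp} \to \Bbbk\text{-}\mathbf{Mod}$, $G \mapsto \mathrm{H}^{\mathrm{rack}}_\bullet(\mathrm{Conj}(G))$. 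The whole problem is then to refine the target of this functor first to $\mathbf{Zinb}^c$ and then to relate it to $\mathbf{Com}^c$.

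First I would produce the coZinbiel coalgebra structure. The key device is the notion of an \emph{$\mathbf{L}$-set}, a cubical-type combinatorial object whose normalized chain complex recovers the rack complex. On such a cubical object the natural comultiplication is not merely cocommutative: the decomposition of an $n$-cube into its front and back faces yields a \emph{half-shuffle} coproduct, and I would check that these half-shuffles satisfy the coZinbiel co-relations already at the level of $\mathbf{L}$-sets, where they reduce to combinatorial identities among cofaces. Passing to homology then endows $\mathrm{H}^{\mathrm{rack}}_\bullet(\mathrm{Conj}(G))$ with a natural coZinbiel coalgebra structure, mirroring the way the dual-Leibniz cooperation arises on $\mathrm{T}(\mathfrak g)$ in the abelian algebraic case.

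Next I would construct the comparison morphism to Eilenberg--MacLane homology. Group homology is computed by the simplicial bar complex, and conjugation provides a chain map from the cubical rack complex to the bar complex (sending a rack $n$-chain to a suitable group $n$-chain); I would verify that it commutes with the differentials and descends to homology. The substance of this step is to show that the induced map $\mathrm{H}^{\mathrm{rack}}_\bullet(\mathrm{Conj}(G)) \to \mathrm{H}_\bullet(G,\Bbbk)$ is a morphism of \emph{cocommutative} coalgebras, the source carrying the cocommutative coalgebra underlying its coZinbiel structure and the target the standard Eilenberg--Zilber comultiplication, and then to exhibit non-triviality by evaluating the map in low degrees.

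The hard part will be steps two and three together: producing a genuinely coZinbiel --- not merely cocommutative --- coproduct that is natural in $G$, and simultaneously arranging that the comparison map to group homology is compatible with exactly the cocommutative structure obtained by forgetting along $\mathbf{Zinb}^c \to \mathbf{Com}^c$. I expect the $\mathbf{L}$-set formalism to be the technical engine making both the half-shuffle coproduct and the comparison map manifest and functorial already at the chain level. Finally, the special cases of the linear group and of abelian groups would serve as consistency checks, the direct sum of matrices (respectively the group multiplication) upgrading the coalgebra to a connected coZinbiel--associative bialgebra and producing tensor-algebra type answers analogous to $\mathrm{T}(\mathrm{HH}_{\bullet-1}(A))$.
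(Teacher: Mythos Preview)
The statement you are addressing is a \emph{conjecture}, and the paper does not claim to prove it. Rather, the paper presents rack homology as a candidate and verifies that it satisfies \emph{most} of the properties such a Leibniz homology should have---explicitly framing its results as ``consistent with'' Loday's conjecture, not as a resolution of it.

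That said, your proposed construction is essentially the paper's program: pass from $G$ to its conjugation rack, interpret the rack nerve as an $\mathbf{L}$-set, obtain the coZinbiel coproduct on $\mathrm{HR}_\bullet$ via half-shuffle decompositions on the cubical chain complex, and build the comparison map $\mathrm{HR}_\bullet(G)\to\mathrm{H}_\bullet(G)$ from the inclusion $\mathrm{L}(\mathrm{N}^{\square}G)\hookrightarrow\mathrm{N}^{\square}G$. The one methodological difference is that the paper does not verify the coZinbiel relations by direct combinatorial computation as you suggest; instead it uses the acyclic models theorem (representability of $\mathrm{CL}_n$ by $\mathrm{L}^n$ together with the vanishing $\mathrm{H}_p(\mathrm{L}^n)=0$ for $p\neq 1$) to obtain the coproducts and their relations up to homotopy.

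Where your framing overshoots is in calling this an ``answer'' to the conjecture. The third clause---that the theory is the natural homology of the correct category of \emph{coquecigrues}---is not settled by exhibiting racks as one candidate. The paper is careful here: it says the results are consistent with the conjecture and that racks are a natural candidate (via Kinyon's and Dancso's integration results), but it does not assert that $\mathbf{Rack}$ is the sought-for category, nor that $\mathrm{HR}_\bullet$ is \emph{the} Leibniz homology rather than \emph{a} homology with the right formal features. Your write-up should adopt the same caution.
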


\subsubsection*{Eilenberg-MacLane homology and conjectural Leibniz homology of abelian groups} Eilenberg-MacLane homology of an abelian group is well known (cf. \cite{Brown_Cohomology_of_groups}). It is provided with a graded Hopf algebra structure, and for a field $\Bbbk$ of characteristic $0$ there is an isomorphism $\mathrm{H}_n(G,\Bbbk) \simeq \Lambda^n(G \otimes \Bbbk)$ for all $n \in \mathbb{N}$. Abelian Lie algebras being linearized objects associated to abelian groups, the similar properties satisfied by the Chevalley-Eilenberg homology and the Leibniz homology of an abelian Lie algebra led J.-L. Loday to state the following conjecture.
\begin{conjecture*}[J.-L. Loday \cite{LodayEns,LodayConjectural}] The Leibniz homology of an abelian group $G$ is provided with a connected coZinbiel-associative bialgebra structure. Moreover for all $n \in \mathbb{N}$ there is an isomorphism $\mathrm{HL}_{n}(G,\Bbbk) \simeq \mathrm{T}^n(G \otimes \Bbbk)$, and if the characteristic of $\Bbbk$ is $0$, then the natural morphism from $\mathrm{HL}_n(G,\Bbbk)$ to $\mathrm{H}_n(G,\Bbbk)$ is the canonical projection $\mathrm{T}^n(G \otimes \Bbbk) \to \Lambda^n(G \otimes \Bbbk)$.
\end{conjecture*}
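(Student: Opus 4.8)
The plan is to realize the conjectural functor $\mathrm{HL}_\bullet(-,\Bbbk)$ on $\mathbf{Grp}$ by the \emph{(pointed) rack homology} of the conjugation rack $\mathrm{Conj}(G)$, so that the structure demanded by the conjecture is produced by the general results already available: the coZinbiel coalgebra structure on pointed rack homology and the natural cocommutative coalgebra morphism from rack homology to Eilenberg-MacLane homology. For an abelian group the conjugation operation $x \triangleright y = xyx^{-1} = y$ is trivial, so $\mathrm{Conj}(G)$ is the \emph{trivial rack} on the underlying set of $G$; the problem therefore reduces to computing the pointed rack homology of a trivial rack together with the two algebraic structures it carries.

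First I would carry out the homological computation. Because the rack operation is trivial, the two families of face maps entering the rack differential (equivalently, the differential of the associated cubical set, resp. $\mathbf{L}$-set) coincide, and the normalized differential degenerates. I would make this precise using the $\mathbf{L}$-set/cubical description of the rack space: a trivial rack presents its rack space as a product object, so an Eilenberg--Zilber (shuffle) argument splits the normalized pointed complex into tensor powers of its degree-one part. This should yield, in each degree, an identification $\mathrm{HL}_n(G,\Bbbk) \simeq \mathrm{T}^n(V)$ with $V := \mathrm{HL}_1(G,\Bbbk)$, giving at once the graded tensor-module shape required by the conjecture and the connectedness $\mathrm{HL}_0(G,\Bbbk) \simeq \Bbbk$.

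Next I would install the two structures. The coZinbiel coproduct is inherited from the general coZinbiel coalgebra structure on pointed rack homology. The associative product is the analogue of the direct sum of matrices used for $\mathfrak{gl}(A)$: since $G$ is abelian the multiplication $m : G \times G \to G$ is a group homomorphism, hence a morphism of racks, and composing the induced map on homology with the Künneth isomorphism $\mathrm{HL}_\bullet(G,\Bbbk) \otimes \mathrm{HL}_\bullet(G,\Bbbk) \xrightarrow{\sim} \mathrm{HL}_\bullet(G \times G,\Bbbk)$ produces a graded associative product with unit the generator of $\mathrm{HL}_0$. I would then verify the mixed \emph{coZinbiel-associative} compatibility, checking it on the generating degree-one classes and propagating it through the tensor decomposition; on a connected bialgebra of this type the rigidity/structure theorem invoked in the $\mathfrak{gl}(A)$ case pins the object down as the free connected coZinbiel-associative bialgebra, i.e. the tensor bialgebra on $V$.

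The remaining point, and the one I expect to be the main obstacle, is to identify $V = \mathrm{HL}_1(G,\Bbbk)$ with $G \otimes \Bbbk$ rather than with the much larger free module $\Bbbk[G]$ on the underlying set, and then to match the comparison map with the canonical projection $\mathrm{T}^n(G \otimes \Bbbk) \twoheadrightarrow \Lambda^n(G \otimes \Bbbk)$. This is exactly where the group structure, as opposed to the bare rack, has to enter: the reduction forcing $[xy] = [x] + [y]$ in degree one must be read off from the pointing together with compatibility with the natural morphism to Eilenberg-MacLane homology, for which $\mathrm{H}_1(G,\Bbbk) \simeq G \otimes \Bbbk$. Granting this identification in degree one, the target is the tensor bialgebra $\mathrm{T}(G \otimes \Bbbk)$, whose cocommutative coalgebra quotient is $\Lambda(G \otimes \Bbbk) \simeq \mathrm{H}_\bullet(G,\Bbbk)$ in characteristic $0$; by naturality and cocommutativity the coalgebra morphism of the earlier theorem is then forced to be the canonical projection $\mathrm{T}^n \twoheadrightarrow \Lambda^n$, which would complete the argument.
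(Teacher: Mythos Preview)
The statement you are trying to prove is a \emph{conjecture} of Loday; the paper does not prove it and does not claim to. What the paper does prove (Section~4) is the analogous statement for \emph{rack} homology, which is exactly the candidate you have chosen for $\mathrm{HL}_\bullet$. Your overall strategy therefore coincides with the paper's, and the coZinbiel--associative bialgebra structure and the comparison map to $\mathrm{H}_\bullet$ go through just as you outline. But the very obstacle you flag in your last paragraph is genuine and is \emph{not} resolved: the paper computes $\mathrm{HR}_1(G,\Bbbk)\simeq\Bbbk[G\setminus\{0\}]$, not $G\otimes\Bbbk$, and hence $\mathrm{HR}_n(G,\Bbbk)\simeq\mathrm{T}^n(\Bbbk[G\setminus\{0\}])$. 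The relation $[xy]=[x]+[y]$ you hope to extract is not forced by the pointing (which only kills the neutral element as a degenerate simplex) nor by compatibility with the map to Eilenberg--MacLane homology (a map \emph{out of} $\mathrm{HR}_1$ cannot create relations \emph{inside} $\mathrm{HR}_1$). So rack homology realizes a variant of Loday's conjecture with the ``wrong'' primitive part, and the conjecture as literally stated remains open in the paper.

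For the parts that do go through, the paper's arguments are more direct than the ones you sketch. Since conjugation in an abelian group is trivial, the rack differential is identically zero and $\mathrm{HR}_n(G,\Bbbk)=\mathrm{CR}_n(G,\Bbbk)$; no Eilenberg--Zilber splitting is needed. The Pontryagin product is literal concatenation $(g_1,\dots,g_p)\star(g_{p+1},\dots,g_{p+q})=(g_1,\dots,g_{p+q})$, and its associativity and the semi-Hopf compatibility $\Delta_\prec\circ\star=\star_\otimes\circ(\Delta_\prec\otimes\Delta)$ with the half-shuffle coproduct are immediate on generators. The comparison map $\mathrm{S}_n$ reduces, again because conjugation is trivial, to the antisymmetrizer $\sum_{\sigma\in\mathbb{S}_n}\epsilon(\sigma)(g_{\sigma(1)},\dots,g_{\sigma(n)})$, which is the canonical map $\mathrm{T}^n(\Bbbk[G\setminus\{0\}])\to\Lambda^n(G\otimes\Bbbk)$ --- note the domain.
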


\subsubsection*{Eilenberg-MacLane homology and conjectural Leibniz homology of the linear group $\mathrm{GL}(R)$} The Eilenberg-MacLane homology of the (infinite) linear group $\mathrm{GL}(R)$ is provided with a connected commutative graded Hopf algebra structure (cf. \cite{LodayCyclic}). The Lie algebra of matrices $\mathfrak{gl}(A)$ being the linearized object associated to the linear group, the similar propreties satisfied by the Chevalley-Eilenberg homology and the Leibniz homology of $\mathfrak{gl}(A)$ led J.-L. Loday to state the following conjecture.
\begin{conjecture*}[J.-L. Loday \cite{LodayEns,LodayConjectural}] The Leibniz homology of the linear group $\mathrm{GL}(R)$ is provided with a connected coZinbiel-associative bialgebra structure. 
\end{conjecture*}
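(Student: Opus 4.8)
The plan is to realize the conjectural Leibniz homology of $\mathrm{GL}(R)$ by the \emph{pointed rack homology} studied in the body of the paper, and then to equip it with the associative product predicted by the Lie-algebra case. By the results established above for $\mathbf{L}$-sets, the pointed rack homology $\mathrm{HR}_\bullet(\mathrm{GL}(R),\Bbbk)$ of the conjugation rack of $\mathrm{GL}(R)$ already carries a natural coZinbiel coalgebra structure together with a cocommutative coalgebra morphism to the Eilenberg-MacLane homology $\mathrm{H}_\bullet(\mathrm{GL}(R),\Bbbk)$. It therefore suffices to endow $\mathrm{HR}_\bullet(\mathrm{GL}(R),\Bbbk)$ with an associative, unital, connected product that is compatible with this coproduct in the sense defining a coZinbiel-associative bialgebra, mirroring exactly the structure carried by $\mathrm{HL}_\bullet(\mathfrak{gl}(A),\Bbbk)$.

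First I would build the product from the direct sum of matrices. The block-diagonal maps $\mathrm{GL}_m(R)\times\mathrm{GL}_n(R)\to\mathrm{GL}_{m+n}(R)$ are compatible with the stabilization defining $\mathrm{GL}(R)$ and commute with conjugation, so they assemble into a morphism of racks $\oplus\colon\mathrm{GL}(R)\times\mathrm{GL}(R)\to\mathrm{GL}(R)$. Applying the cubical $\mathbf{L}$-set chain functor, together with an Eilenberg-Zilber comparison between the cubical chains of a product and the tensor product of cubical chains, turns $\oplus$ into a chain map whose homology class gives a product $\mathrm{HR}_\bullet(\mathrm{GL}(R),\Bbbk)^{\otimes 2}\to\mathrm{HR}_\bullet(\mathrm{GL}(R),\Bbbk)$. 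Associativity follows from the associativity of the block sum under the canonical identification $\mathrm{GL}_{m+n+p}\cong(\mathrm{GL}_m\oplus\mathrm{GL}_n)\oplus\mathrm{GL}_p$, unitality from the empty block, and connectedness from the computation $\mathrm{HR}_0(\mathrm{GL}(R),\Bbbk)\cong\Bbbk$ forced by the pointing.

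The main obstacle is the compatibility between this product and the coZinbiel coproduct. A coZinbiel-associative bialgebra requires a mixed distributive law, i.e. the coZinbiel cooperation must be a morphism for the product, equivalently $\oplus$ must respect the half-shuffle coproduct coming from the cubical structure. I would verify this at the chain level by exhibiting both operations as induced by explicit cubical maps — the diagonal-type approximation furnishing the Zinbiel half-shuffle and the block-sum furnishing the multiplication — and then comparing them through the acyclic-models / Eilenberg-Zilber method. The delicate point is that the Zinbiel structure is \emph{non-symmetric}, so the symmetric Eilenberg-Zilber equivalence does not suffice: one must track the ordering of cubes carefully and construct an explicit, order-respecting chain homotopy between the two composites defining the distributive law, which then descends to the required strict identity in homology.

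Assembling these pieces yields a connected coZinbiel-associative bialgebra structure on $\mathrm{HR}_\bullet(\mathrm{GL}(R),\Bbbk)$, providing the evidence for Loday's conjecture that the construction is designed to exhibit. If one wishes to push further, applying Burgunder's structure theorem for such bialgebras would show this rack homology to be cofree as a coZinbiel coalgebra, the group-level analogue of the isomorphism $\mathrm{HL}_\bullet(\mathfrak{gl}(A),\Bbbk)\simeq\mathrm{T}(\mathrm{HH}_{\bullet-1}(A))$.
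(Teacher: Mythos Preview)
Your overall strategy---realize the conjectural Leibniz homology as pointed rack homology, import the coZinbiel coproduct from the $\mathbf{L}$-set machinery, and build the associative product from the direct sum of matrices via an Eilenberg--Zilber map---is exactly the route taken in the paper. However, two points in your execution are genuine gaps that the paper handles differently.

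First, your claim that the block-diagonal maps $\mathrm{GL}_m(R)\times\mathrm{GL}_n(R)\to\mathrm{GL}_{m+n}(R)$ are ``compatible with the stabilization defining $\mathrm{GL}(R)$'' is false as stated: stabilizing $A$ to $A\oplus \mathrm{I}_1$ and then applying $\oplus$ does \emph{not} agree with applying $\oplus$ first and then stabilizing (the identity block lands in a different slot). The paper avoids this by using instead the interleaving map $\mu_n:\mathrm{GL}_n\times\mathrm{GL}_n\to\mathrm{GL}_{2n}$, for which the family $\{\mu_n\}$ does pass to a genuine group morphism $\mu:\mathrm{GL}(R)\times\mathrm{GL}(R)\to\mathrm{GL}(R)$. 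The relation to $\oplus$ is only up to conjugation (Lemma~\ref{Lemma : relation between mu and oplus}).

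Second, and relatedly, associativity of the induced product on $\mathrm{HR}_\bullet$ is not as immediate as you suggest. The two ways of associating $\mu$ differ by an inner automorphism of $\mathrm{GL}(R)$, so one needs that conjugation acts trivially on rack homology. The paper isolates this as a separate lemma (Lemma~\ref{Lemma : invariance by conjugation of rack homology}): for any rack $X$ and $a\in X$, the map $-\lhd a$ is homotopic to the identity on $\mathrm{CR}_\bullet(X)$ via $h_a(x_1,\dots,x_n)=(a,x_1,\dots,x_n)$. This lemma is the engine behind both the well-definedness of the product on the colimit and its associativity; your sketch omits it entirely. Once you have it, the semi-Hopf compatibility $\Delta_\prec\circ\star=\star_\otimes\circ(\Delta_\prec\otimes\Delta)$ can be checked directly from the explicit shuffle formulas rather than through an abstract acyclic-models argument.
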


\subsection*{Results} A natural candidate for this conjectural Leibniz homology is the natural homology theory of objects integrating Leibniz algebras. A \textit{rack} is a mathematical object which encapsulates some properties of the conjugation in a group (cf. \cite{Joyce_A_classifying_invariant_of_knots}). There are at least two ways to construct a Leibniz algebra from a rack, a geometric one due to M.K. Kinyon (\cite{Kinyon}) and an algebraic one due to S. Dansco (\cite{Dancso}), and reciprocally any Leibniz algebra integrates into a \textit{local Lie rack} (cf. \cite{CovezIntegration}). As a consequence it is natural to suspect the homology theory of racks to be linked to this conjectural Leibniz homology theory.
\vskip 0.2cm
In this article we prove that rack homology satisfies most of the properties a Leibniz homology for groups should satisfy. These results are the contents of the following theorems. 
\subsubsection*{Algebraic structure on rack homology and relation with Eilenberg-MacLane homology}
\begin{theorem*}
The \textnormal{rack homology} of a rack $X$ is naturally provided with a coZinbiel coalgebra structure
\begin{align*}
\mathrm{HR}_{\bullet}(-,\Bbbk) : \mathbf{Rack} \to \mathbf{Zinb}^c,
\end{align*}
and there is a natural morphism $\mathrm{S}_{\bullet} : \mathrm{HR}_{\bullet}(G,\Bbbk) \to \mathrm{H}_{\bullet}(G,\Bbbk)$ of cocommutative coalgebra from the rack homology of a group $G$ to its Eilenberg-MacLane homology fitting in a long exact sequence.
\begin{align*}
\cdots \to \mathrm{H}^{rel}_{n+1}(G,\Bbbk) \to \mathrm{HR}_n(G,\Bbbk) \stackrel{\mathrm{S}_n}{\to} \mathrm{H}_n(G,\Bbbk) \to \mathrm{H}_n^{rel}(G,\Bbbk) \to \cdots 
\end{align*}
\end{theorem*}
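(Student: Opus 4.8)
The plan is to treat both assertions through the formalism of \textbf{L-sets}: I will realize the rack chain complex as the chain complex of an L-set, transport a coZinbiel cooperation from the L-set structure, and then compare with the bar construction of a group. Recall that $\mathrm{HR}_\bullet(X,\Bbbk)$ is the homology of the rack complex $(C^R_\bullet(X),\partial)$ with $C^R_n(X)=\Bbbk[X^n]$ and cubical differential $\partial=\sum_{i=1}^n(-1)^i(\partial^0_i-\partial^1_i)$, where $\partial^0_i$ deletes the $i$-th entry and $\partial^1_i$ deletes it after letting it act on its predecessors. First I would observe that the rack space of $X$ naturally carries the structure of an L-set, so that it suffices to equip the chain complex of an arbitrary L-set with a coZinbiel cooperation. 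Concretely, I would define $\delta\colon C^R_\bullet(X)\to C^R_\bullet(X)\otimes C^R_\bullet(X)$ as a \emph{half-diagonal}: a signed sum over ordered partitions $\{1,\dots,n\}=S\sqcup T$ with $S,T\neq\varnothing$ and $1\in S$, sending $(x_1,\dots,x_n)$ to the tensor product of the subcube on the coordinates $S$ and the subcube on the coordinates $T$, the latter being acted on by the preceding coordinates in $S$ exactly as the maps $\partial^1$ prescribe. Summing over both $1\in S$ and $1\in T$ recovers the classical (Serre) cubical diagonal; keeping only $1\in S$ is the Zinbiel refinement.

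I would then carry out two verifications. First, that $\delta$ is a morphism of chain complexes, i.e. $\delta\circ\partial=(\partial\otimes 1\pm 1\otimes\partial)\circ\delta$; this is the point where \emph{self-distributivity} of the rack intervenes, as it is precisely the identity needed to commute a face map past the reindexing of an acted-on coordinate. Second, that $\delta$ satisfies the coZinbiel (dual half-shuffle) coidentity, which reduces to a purely combinatorial identity among ordered partitions of $\{1,\dots,n\}$ and is independent of the rack. Naturality in $X$ is immediate because $\delta$ is defined by formulas equivariant for rack morphisms. Passing to homology yields the functor $\mathrm{HR}_\bullet(-,\Bbbk)\colon\mathbf{Rack}\to\mathbf{Zinb}^c$.

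For a group $G$ I would use the conjugation rack $x\triangleright y=y^{-1}xy$, whose structure maps are built from the multiplication of $G$. I would construct a natural chain map $s_\bullet\colon C^R_\bullet(G)\to C^{EM}_\bullet(G)$ to the bar complex, induced by triangulating each cube into simplices (the Eilenberg--Zilber subdivision) and identifying the resulting data with bar tuples through the conjugation action; that $s_\bullet$ commutes with the differentials is again guaranteed by self-distributivity together with associativity. Set $\mathrm{S}_\bullet:=H_\bullet(s_\bullet)$. To see that $\mathrm{S}_\bullet$ is a morphism of cocommutative coalgebras, note that symmetrizing $\delta$ yields the cubical diagonal on $C^R_\bullet(G)$, that $s_\bullet$ intertwines this with the Alexander--Whitney diagonal on $C^{EM}_\bullet(G)$ up to chain homotopy, and that both diagonals are cocommutative on homology; hence $\mathrm{S}_\bullet$ respects the induced coproducts. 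Finally I would define the relative complex $C^{rel}_\bullet(G)$ as the (shifted) mapping cone of $s_\bullet$, whose long exact homology sequence is exactly $\cdots\to\mathrm{H}^{rel}_{n+1}(G)\to\mathrm{HR}_n(G)\xrightarrow{\mathrm{S}_n}\mathrm{H}_n(G)\to\mathrm{H}^{rel}_n(G)\to\cdots$.

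The crux is the first verification above: producing one cooperation $\delta$ that is at once a chain map for the cubical rack differential and satisfies the coZinbiel coidentity on the nose. The full cubical diagonal only yields a cocommutative coproduct, and isolating its coZinbiel half while preserving compatibility with $\partial$ forces careful bookkeeping of cubical signs and of how the rack action threads through each ordered partition — this is precisely the combinatorial content the L-set formalism is designed to organize. A secondary, milder difficulty is that $s_\bullet$ respects the two diagonals only up to homotopy, so one must check that this homotopy suffices to make $\mathrm{S}_\bullet$ a coalgebra morphism after passing to homology.
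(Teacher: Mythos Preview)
Your outline is broadly sound, but it departs from the paper's argument in two structural ways, and one of your claims is stronger than what the paper actually establishes.

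For the coZinbiel structure, the paper does write down explicit half-shuffle formulas $\Delta_\prec,\Delta_\succ$ on $\mathrm{CL}_\bullet$ in low degrees, but it does \emph{not} verify the coZinbiel relation on the nose. Instead it proves representability of $\mathrm{CL}_n$ by the models $\mathrm{L}^n$, computes $\mathrm{H}_\bullet(\mathrm{L}^n)$ (concentrated in degree~$1$), and then invokes the acyclic models theorems to extend the maps and to show that the codendriform identities and the relation $\Delta_\succ\simeq\tau\circ\Delta_\prec$ hold \emph{up to homotopy}; only after passing to homology does one get a genuine coZinbiel coalgebra. Your claim that the coZinbiel coidentity for $\delta$ ``reduces to a purely combinatorial identity \dots\ independent of the rack'' is therefore more than the paper asserts: the rack action is woven into the second tensor factor of $\delta$, so any chain-level identity will at minimum need self-distributivity, and the paper deliberately sidesteps the question of whether it holds strictly. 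If you pursue the direct route you must either produce that strict verification or fall back to a homotopy argument.

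For the comparison map and the long exact sequence, the paper's mechanism is different from yours and more rigid. It proves (Proposition~3.2) that the rack nerve $\mathrm{N}^{\mathrm R}G$ is isomorphic to $\mathrm{L}(\mathrm{N}^\square G)$, the $\mathbf{L}$-sub-cubical-set of the cubical nerve of $G$. The inclusion $\mathrm{CL}_\bullet(\mathrm{N}^\square G)\hookrightarrow \mathrm{C}_\bullet(\mathrm{N}^\square G)$ is then a genuine injection of complexes, so the relative complex is an honest quotient and the long exact sequence is the associated one; the passage to Eilenberg--MacLane homology goes through the acyclic-models quasi-isomorphism $\mathrm{C}_\bullet(\mathrm{N}^\square G)\simeq \mathrm{C}_\bullet(\mathrm{N}^\Delta G)$. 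Your triangulation-plus-mapping-cone construction would yield an equivalent long exact sequence, but it bypasses the identification $\mathrm{N}^{\mathrm R}G\cong \mathrm{L}(\mathrm{N}^\square G)$, which is the conceptual core of the paper's argument and what makes the coalgebra compatibility of $\mathrm{S}_\bullet$ automatic (both diagonals arise from the same cubical Alexander--Whitney map, restricted and compared via acyclic models, rather than via an ad hoc homotopy you would have to supply).
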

\subsubsection*{Rack homology of abelian groups}
\begin{theorem*} The rack homology of an abelian group $(G,+)$ is provided with a connected coZinbiel-associative bialgebra structure where the associative product is induced by the rack morphism $+ : G \times G \to G$. Moreover for all $n \in \mathbb{N}$ there is an isomorphism $\mathrm{HR}_n(G,\Bbbk) \simeq \mathrm{T}^n(\Bbbk[G\setminus\{0\}])$, and if the characteristic of $\Bbbk$ is $0$, then the natural morphism from $\mathrm{HR}_n(G,\Bbbk)$ to $\mathrm{H}_n(G,\Bbbk)$ is the canonical map $\mathrm{T}^n(\Bbbk[G\setminus\{0\}]) \to \Lambda^n(G \otimes \Bbbk)$.
\end{theorem*}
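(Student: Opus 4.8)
\subsection*{Proof proposal}
The plan is to exploit the fact that the conjugation rack underlying an abelian group is trivial. First I would observe that for $(G,+)$ abelian the rack operation $x \triangleright y = -y+x+y$ reduces to $x \triangleright y = x$, so that in the (pointed) rack differential $\mathrm{d}_n = \sum_i (-1)^i(\partial_i^0 - \partial_i^1)$ the deletion operator $\partial_i^0$ and the action operator $\partial_i^1$ coincide termwise (the action by $\triangleright x_i$ being the identity). Consequently $\mathrm{d}_n = 0$ identically, and the pointed rack homology equals the module of non-degenerate pointed chains, the basepoint being $0 \in G$. This yields $\mathrm{HR}_n(G,\Bbbk) = \Bbbk[(G\setminus\{0\})^n] \cong \mathrm{T}^n(\Bbbk[G\setminus\{0\}])$, with $\mathrm{HR}_0(G,\Bbbk) = \Bbbk$, so connectedness comes for free.

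Because the differential vanishes, the coalgebra structure produced by the first theorem is already present at the chain level, and I would identify it explicitly. Through the underlying cubical ($\mathbf{L}$-set) structure the coproduct is the cubical Serre diagonal, and I would check that it agrees with the standard cofree coZinbiel coproduct on the tensor module $\mathrm{T}(\Bbbk[G\setminus\{0\}])$ cogenerated by the degree-one part $\Bbbk[G\setminus\{0\}]$. For the associative product I would use that $+ : G \times G \to G$ is a rack morphism (automatic between trivial racks) and feed it into the cubical cross product
\begin{align*}
\mathrm{HR}_p(G)\otimes\mathrm{HR}_q(G) \to \mathrm{HR}_{p+q}(G\times G) \xrightarrow{\;+_\ast\;} \mathrm{HR}_{p+q}(G).
\end{align*}
Since in the cubical setting a product of two cubes is again a cube, this cross product is given by a single (unshuffled) term; applying $+_\ast$, which sends $(g_i,0)\mapsto g_i$ and $(0,h_j)\mapsto h_j$, it carries $(g_1,\dots,g_p)\otimes(h_1,\dots,h_q)$ to $(g_1,\dots,g_p,h_1,\dots,h_q)$. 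Thus the product induced by $+$ is concatenation, the free associative product on $\mathrm{T}(\Bbbk[G\setminus\{0\}])$.

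Next I would verify the coZinbiel-associative bialgebra compatibility between this concatenation product and the Serre-diagonal coproduct. This is the axiom check exhibiting $\mathrm{T}(\Bbbk[G\setminus\{0\}])$ as the connected coZinbiel-associative bialgebra cofreely cogenerated by its primitives, the degree-one subspace $\Bbbk[G\setminus\{0\}]$. Invoking the structure theorem cited in the introduction (cf. \cite{Burgunder_graph_complex_and_Leibniz_homology}) then pins the bialgebra down up to isomorphism and upgrades the additive identification $\mathrm{HR}_n(G,\Bbbk)\cong\mathrm{T}^n(\Bbbk[G\setminus\{0\}])$ to an isomorphism of bialgebras.

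Finally, for the comparison with Eilenberg-MacLane homology in characteristic $0$, I would recall the classical computation $\mathrm{H}_n(G,\Bbbk)\cong\Lambda^n(G\otimes\Bbbk)$ for an abelian group and trace the natural morphism $\mathrm{S}_n$ of the first theorem at the chain level. As every rack chain is a cycle, it suffices to evaluate $\mathrm{S}_n$ on a generator $(x_1,\dots,x_n)$ and to recognize the antisymmetrized bar cycle representing $\bar x_1\wedge\cdots\wedge\bar x_n$, where $\bar x = x\otimes 1 \in G\otimes\Bbbk$; this exhibits $\mathrm{S}_n$ as the canonical map $\mathrm{T}^n(\Bbbk[G\setminus\{0\}]) \to \Lambda^n(G\otimes\Bbbk)$, namely linearize each tensor factor via $\Bbbk[G\setminus\{0\}]\to G\otimes\Bbbk$ and then project $\mathrm{T}^n \to \Lambda^n$. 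I expect this last identification to be the main obstacle: the additive and structural statements drop out quickly from the vanishing of the differential, but matching $\mathrm{S}_n$ with the canonical projection requires unwinding the construction of $\mathrm{S}$ from the preceding theorem and reconciling the set-theoretic basis $G\setminus\{0\}$ with the group-linearization $G\otimes\Bbbk$, all while keeping careful track of the signs in the antisymmetrization.
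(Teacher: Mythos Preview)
Your proposal is correct and follows essentially the same approach as the paper: both exploit that conjugation in an abelian group is trivial to get a zero differential (hence $\mathrm{HR}_n \cong \mathrm{T}^n(\Bbbk[G\setminus\{0\}])$), identify the Pontryagin product induced by $+$ with concatenation via the cubical Eilenberg--Zilber map, invoke the general coZinbiel coproduct on $\mathbf{L}$-set homology and check the semi-Hopf compatibility, and finally observe that $\mathrm{S}_n$ collapses to the signed sum $\sum_{\sigma}\epsilon(\sigma)(g_{\sigma(1)},\dots,g_{\sigma(n)})$, i.e.\ the canonical projection. The only difference is emphasis: the paper dispatches the identification of $\mathrm{S}_n$ in one line using its explicit formula from Section~3, so the ``main obstacle'' you anticipate is in fact immediate once you quote that formula.
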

\subsubsection*{Rack homology of the linear group $\mathrm{GL}(R)$}
\begin{theorem*}
The rack homology of the linear group $\mathrm{GL}(R)$ is provided with a connected coZinbiel-associative bialgebra structure where the associative product is induced by the direct sum of matrices.
\end{theorem*}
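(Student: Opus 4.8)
The coZinbiel coalgebra structure on $\mathrm{HR}_{\bullet}(\mathrm{GL}(R),\Bbbk)$ is already provided by the first theorem and is natural in the rack, so the plan is to construct an associative product induced by the direct sum of matrices, to establish its compatibility with this coproduct, and to check connectedness. The whole argument is designed to run in parallel with the proof of the abelian-group theorem and with the classical construction of the commutative Hopf algebra structure on the Eilenberg--MacLane homology of $\mathrm{GL}(R)$, replacing group homology by rack homology, the cocommutative coproduct by the coZinbiel one, and the symmetric cross product by the non-symmetric Eilenberg--Zilber map attached to the monoidal structure of $\mathbf{L}$-sets. In particular I neither expect nor want the product to be commutative: exactly as in the abelian case the target is a tensor-type rather than a symmetric-type bialgebra, and the non-commutativity is built into the non-symmetric $\mathbf{L}$-set cross product.

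First I would check that the direct sum $\oplus : \mathrm{GL}(R) \times \mathrm{GL}(R) \to \mathrm{GL}(R)$ is a morphism of racks for the conjugation rack structure $g \triangleright h = g h g^{-1}$. This is the block computation
\begin{align*}
\mathrm{diag}(g_1,g_2)\,\mathrm{diag}(h_1,h_2)\,\mathrm{diag}(g_1,g_2)^{-1} = \mathrm{diag}\big(g_1 h_1 g_1^{-1},\, g_2 h_2 g_2^{-1}\big),
\end{align*}
which shows that $\oplus$ intertwines the product rack structure on the source with the conjugation rack structure on the target, and it remains a rack morphism after passing to the colimit $\mathrm{GL}(R) = \varinjlim_n \mathrm{GL}_n(R)$. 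Composing the $\mathbf{L}$-set cross product with the functorial map $\oplus_{\ast}$ then yields the desired product
\begin{align*}
\mu : \mathrm{HR}_{\bullet}(\mathrm{GL}(R),\Bbbk)^{\otimes 2} \xrightarrow{\ \times\ } \mathrm{HR}_{\bullet}(\mathrm{GL}(R) \times \mathrm{GL}(R),\Bbbk) \xrightarrow{\ \oplus_{\ast}\ } \mathrm{HR}_{\bullet}(\mathrm{GL}(R),\Bbbk).
\end{align*}

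Associativity of $\mu$ reduces to the strict associativity of the block-diagonal sum, $(\mathrm{diag}(g,h))\oplus k = g \oplus (\mathrm{diag}(h,k)) = \mathrm{diag}(g,h,k)$, together with the associativity of the Eilenberg--Zilber map; and connectedness follows from the homological grading together with $\mathrm{HR}_0(\mathrm{GL}(R),\Bbbk) = \Bbbk$, the unit being the class coming from $\mathrm{GL}_0 = \{\ast\}$. The delicate point is the well-definedness of $\mu$ on the colimit: stabilising the two factors in different orders produces block-diagonal matrices such as $\mathrm{diag}(g,1,h)$ and $\mathrm{diag}(g,h,1)$ that differ only by conjugation by a permutation matrix. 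Hence the crucial lemma I would establish is that conjugation by a fixed element acts trivially on rack homology, by an explicit prism/homotopy operator in the rack complex modelled on the classical argument that inner automorphisms act trivially on group homology. It is worth stressing that, contrary to the classical $\mathrm{GL}(R)$ situation where this triviality is what forces commutativity, here it serves only to make $\mu$ well defined and associative: because the $\mathbf{L}$-set cross product is not symmetric, the swap of the two factors is not related to the identity, and $\mu$ stays genuinely non-commutative.

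Finally, the bialgebra compatibility between $\mu$ and the coZinbiel coproduct is, by naturality of the $\mathbf{L}$-set structure in the rack, governed by the same chain-level identity as in the abelian case, so it reduces to checking the distributive law defining coZinbiel-associative bialgebras in the sense of \cite{Burgunder_graph_complex_and_Leibniz_homology} between the half-shuffle coproduct and the Eilenberg--Zilber product. I expect the genuinely new obstacle for $\mathrm{GL}(R)$, compared with the abelian case where the rack is trivial, to be the conjugation-triviality lemma: one must produce an explicit contracting homotopy in the $\mathbf{L}$-set normalised rack complex showing that conjugation by a fixed matrix induces the identity on $\mathrm{HR}_{\bullet}$, so that $\mu$ descends to the colimit and is associative there. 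Once this is secured, associativity, connectedness and the compatibility relation assemble $\mathrm{HR}_{\bullet}(\mathrm{GL}(R),\Bbbk)$ into the asserted connected coZinbiel-associative bialgebra.
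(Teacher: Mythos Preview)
Your outline follows the same architecture as the paper: produce an associative product from the direct-sum map via an Eilenberg--Zilber cross product, prove the key lemma that inner automorphisms act trivially on rack homology, and verify the semi-Hopf relation. You have correctly identified the crux as the conjugation-triviality lemma. The paper's homotopy for it is even simpler than the prism you anticipate: $h_a(x_1,\dots,x_n)=(a,x_1,\dots,x_n)$ already satisfies $dh_a+h_ad=\mathrm{id}-\mathrm{CR}_\bullet(c_a)$.

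There is one genuine wrinkle in your write-up, and it is exactly the point where the paper makes a different choice. You present $\oplus:\mathrm{GL}(R)\times\mathrm{GL}(R)\to\mathrm{GL}(R)$ as a rack morphism and then apply functoriality; but $\oplus$ is \emph{not} a well-defined map on the colimit (stabilising $g$ before or after taking the block sum with $h$ gives $g\oplus I\oplus h$ versus $g\oplus h\oplus I$). You notice this yourself a paragraph later, but by then you have already invoked ``$\oplus_\ast$'' as if it were the map induced by an honest rack morphism, and you have claimed associativity is ``strict''. In your setup neither well-definedness nor associativity is strict; both need the conjugation lemma, applied at each finite stage with a conjugating matrix that varies with the sizes. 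This can be made to work, but the exposition should be reorganised: define the product via $\oplus:\mathrm{GL}_m\times\mathrm{GL}_n\to\mathrm{GL}_{m+n}$ at finite level, then use conjugation-invariance to pass to the colimit and to get associativity there.

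The paper avoids this bookkeeping by replacing $\oplus$ with the interleaved embedding $\mu_n:\mathrm{GL}_n\times\mathrm{GL}_n\to\mathrm{GL}_{2n}$ (odd rows/columns from the first factor, even ones from the second). This $\mu$ \emph{is} a genuine group homomorphism on $\mathrm{GL}(R)$, hence a rack morphism, so the Pontryagin product is defined on the nose; conjugation-invariance is then used only to prove associativity, since $\mu(\mu(A,B),C)$ and $\mu(A,\mu(B,C))$ are conjugate rather than equal. The two routes are equivalent and rely on the same lemma; the paper's choice just localises the non-strictness in one place.
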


The plan for this article is the following.

\subsection*{Section 1 : (co)Dendriform and (co)Zinbiel (co)algebras} This section is based on \cite{LodayVallette} and \cite{Burgunder_graph_complex_and_Leibniz_homology}. It recalls definitions about (co)dendriform and (co)Zinbiel (co)algebras, together with definitions and a structure theorem about coZinbiel-associative bialgebras.

\subsection*{Section 2 : $\mathbf{L}$-sets} This section is the core of our paper where we define the new notion of \textit{$\mathbf{L}$-sets}. The category of \textit{$\mathbf{L}$-sets}, denoted $\mathbf{LSet}$, is a full subcategory of the category $\mathbf{cSet}$ of cubical sets. First we prove the existence of left and right adjoint $\mathrm{L},\Gamma : \mathbf{LSet} \to \mathbf{cSet}$ to the forgetful functor $\mathrm{U} : \mathbf{LSet} \to \mathbf{cSet}$.
\begin{align*}
\Gamma \vdash \mathrm{U} \dashv \mathrm{L} 
\end{align*}
By construction there exists of a long exact sequence relating the homology of a cubical set $\mathrm{X}$ and the homology of the $\mathbf{L}$-set $\mathrm{L(X)}$ naturally associated to it.
\begin{align*}
\cdots \to \mathrm{H}^{rel}_{n+1}(X,\Bbbk) \to \mathrm{H}_{n}(\mathrm{L(X)},\Bbbk) \to \mathrm{H}_{n}(\mathrm{X},\Bbbk) \to \mathrm{H}^{rel}_{n}(\mathrm{X},\Bbbk) \to \cdots
\end{align*}
We finish this section with the proof of the existence of a coZinbiel coalgebra structure on the homology of any $\mathbf{L}$-set (Theorem \ref{Theorem : Zinbiel up to homotopy coalgebra structure on Leibniz homology}) using the method of the acyclic models. 

\subsection*{Section 3 : $\mathbf{L}$-homology of groups and rack homology} In this section we compute the $\mathbf{L}$-set $\mathrm{L}(\mathrm{N}^{\square}G)$ associated to the \textit{cubical nerve} of a group $G$. The importance of the cubical nerve is that its homology is isomorphic to the Eilenberg-MacLane homology of the group. We prove that $\mathrm{L}(\mathrm{N}^{\square}G)$ is isomorphic to the \textit{nerve of the rack $G$} (Proposition \ref{Theorem : bijection from L(N(G,M)) to MGn}). This result implies that the $\mathbf{L}$-homology of the cubical set $\mathrm{L}(\mathrm{N}^{\square}G)$ is exactly the rack homology of $G$. With the results proved in the previous section, we deduce the existence of a coZinbiel coalgebra structure on rack homology (Theorem \ref{Theorem : Zinbiel up to homotopy coalgebra structure on rack homology}) and the existence of a morphism from the rack homology of a group to its Eilenberg-MacLane homology fitting into a long exact sequence (Theorem \ref{Theorem : long exact sequence linking HR and H}). 

\subsection*{Section 4 : Rack homology of abelian groups} In this section we focus on the particular case of abelian groups. First the definition of the graded Hopf algebra structure on the Eilenberg-MacLane homology of an abelian group is recalled, together with the computation of the Eilenberg-MacLane homology groups of an abelian group. Then we prove the existence of a coZinbiel-associative bialgebra structure on the rack homology of an abelian group, and we compute the rack homology groups of an abelian group. 

\subsection*{Section 5 : Rack homology of the linear group} In this section we focus on the particular case of the linear group $\mathrm{GL}(R)$. First we recall the definition of the graded Hopf algebra structure on the Eilenberg-MacLane homology of $\mathrm{GL}(R)$. Then we prove the existence of a coZinbiel-associative bialgebra structure on its rack homology (Theorem \ref{Theorem : coZinbiel-associative bialgebra structure on the rack homology of the linear group}).

\subsection*{Appendix A: Acyclic models} This appendix is a summary of \cite{EilenbergMacLane}. It recalls definitions and theorems of the Eilenberg-MacLane theory about \textit{acyclic models}. 

\subsection*{Appendix B: Simplicial and cubical sets} This appendix recalls definitions and properties of simplicial and cubical sets. Especially we remind the construction of a cocommutative coalgebra structure on the homology of a cubical set using the method of acyclic models.

\section{(co)Dendriform and (co)Zinbiel (co)algebras}

\subsection{Shuffle} For all $n \in \mathbb{N}^*$ let $\mathbb{S}_n$ be the group of permutations of the set $\{1,\dots,n\}$. For all $p,q \in \mathbb{N}^*$ let $\mathrm{Sh}_{p,q}$ be the subset of elements $\sigma \in \mathbb{S}_{p+q}$ satisfying
\begin{align*}
\sigma(1) < \cdots < \sigma(p) \, \text{ and } \, \sigma(p+1) < \cdots < \sigma(p+q).  
\end{align*}
Such an element $\sigma$ is called a \textit{(p,q)-shuffle}. Remark that $\sigma(1) = 1$ or $\sigma(1) = p+1$. The subset of $(p,q)$-shuffles satisfying $\sigma(1) = 1$ (resp. $\sigma(1) = p+1$) is denoted by $\mathrm{Sh}_{p,q}^1$ (resp. $\mathrm{Sh}_{p,q}^{p+1}$).
\par
There is a bijection $\mathrm{Sh}_{p,q} \stackrel{\iota}{\simeq} \mathrm{Sh}_{q,p}$ given by
\begin{align*}
\iota(\sigma)(k) &:= 
\left\{\begin{array}{ll}
\sigma(k+p) &  \text{if } 1 \leq k \leq q,\\
\sigma(k-q) & \text{if } q+1 \leq k \leq p+q.
\end{array}\right.
\end{align*}
\par
For all $p,q,r \in \mathbb{N}^*$ let $\mathrm{Sh}_{p,q,r}$ be the subset of elements $\sigma \in \mathbb{S}_{p+q+r}$ satisfying
\begin{align*}
\sigma(1) < \cdots < \sigma(p) \, \text{ and } \, \sigma(p+1) < \cdots < \sigma(p+q) \, \text{ and } \, \sigma(p+q+1) < \cdots < \sigma(p+q+r).
\end{align*}
Such an element $\sigma$ is called a \textit{(p,q,r)-shuffle}.
\par
For all $p,q,r \in \mathbb{N}^*$ there are bijections $\mathrm{Sh}_{p+q,r} \times \mathrm{Sh}_{p,q} \stackrel{\alpha}{\simeq} \mathrm{Sh}_{p,q,r} \stackrel{\beta}{\simeq} \mathrm{Sh}_{p,q+r} \times \mathrm{Sh}_{q,r}$ given by 
\begin{align*}
\alpha(\sigma,\gamma)(k) &:= 
\left\{\begin{array}{ll}
\sigma(\gamma(k)) &  \text{if } 1 \leq k \leq p+q,\\
\sigma(k) & \text{if } p+q+1 \leq k \leq p+q+r.
\end{array}\right.
\\
\beta(\sigma,\gamma)(k) &:=
\left\{\begin{array}{ll}
\sigma(k) &  \text{if } 1 \leq k \leq p,\\
\sigma(p+\gamma(k-p)) & \text{if } p+1 \leq k \leq p+q+r.
\end{array}\right.
\end{align*} 

\subsection{Dendriform (co)algebra (\cite{LodayVallette})} A \textit{(graded) dendriform algebra} is a graded vector space $A$ endowed with two products $\prec,\succ : A \otimes A \to A$ satisfying the following relations called \textit{dendriform relations}:
%\begin{align}
%\prec \circ (\prec \otimes \mathrm{id}) &= \prec \circ (\mathrm{id} \otimes \prec + \mathrm{id} \otimes \succ),\\
%\prec \circ (\succ \otimes \mathrm{id}) &= \succ \circ (\mathrm{id} \otimes \prec),\\
%\succ \circ (\mathrm{id} \otimes \succ ) &= \succ \circ (\prec \otimes \mathrm{id}+ \succ \otimes \mathrm{id}).
%\end{align}
%In terms of elements in $A$ the dendriform relations write:
\begin{align*}
(x \prec y) \prec z &= x \prec (y \prec z + y \succ z),\\
(x \succ y) \prec z &= x \succ (y \prec z),\\
x \succ (y \succ z) &= (x \succ y + x \prec y) \succ z.
\end{align*}
A dendriform algebra $(A,\prec,\succ)$ is said to be \textit{unital} if there exists an element $1 \in A$ such that $1 \prec x = x \succ 1 = 0$ and $1 \succ x = x \prec 1 = x$ for all $x \in A$. Note that $1 \prec 1$ and $1 \succ 1$ are not defined. Given a dendriform algebra $(A,\prec,\succ)$, the product $\star := \prec + \succ$ on $A$ is associative. Therefore there exists a functor between categories of algebras $\mathbf{Dend} \to \mathbf{As}$.
\vskip 0.2cm
By duality we get the definition of a codendriform coalgebra. A \textit{(graded) codendriform coalgebra} is a graded vector space $C$ endoced with two coproducts $\Delta_{\prec},\Delta_{\succ} : C \to C \otimes C$ satisifying the following relation called \textit{codendriform relation}:
\begin{align*}
(\Delta_{\prec} \otimes \mathrm{id}) \circ \Delta_{\prec} &= (\mathrm{id} \otimes \Delta_{\prec} + \mathrm{id} \otimes \Delta_{\succ}) \circ \Delta_{\prec},\\
(\Delta_{\succ} \otimes \mathrm{id}) \circ \Delta_{\prec} &= (\mathrm{id} \otimes \Delta_{\prec}) \circ \Delta_{\succ},\\
(\mathrm{id} \otimes \Delta_{\succ}) \circ \Delta_{\succ} &= (\Delta_{\prec} \otimes \mathrm{id} +  \Delta_{\succ} \otimes \mathrm{id}) \circ \Delta_{\succ}.
\end{align*}
A codendriform coalgebra is said to be \textit{counital} if there exists a linear map $c : C \to \Bbbk$ such that $(c \otimes \mathrm{id}) \circ \Delta_{\prec} = (\mathrm{id} \otimes c) \circ \Delta_{\succ} = 0$ and $(c \otimes \mathrm{id}) \circ \Delta_{\succ} = (\mathrm{id} \otimes c) \circ \Delta_{\prec} = \mathrm{id}$. Note that $(c \otimes c) \circ \Delta_{\prec}$ and $(c \otimes c) \circ \Delta_{\succ}$ are not defined. 
\vskip 0.2cm
Given a codendriform coalgebra $(A,\Delta_{\prec},\Delta_{\succ})$, the coproduct $\Delta := \Delta_{\prec} + \Delta_{\succ}$ on $A$ is coassociative. Therefore there exists a functor between categories of coalgebras $\mathbf{Dend}^c \to \mathbf{As}^c$.
\subsection{(co)Zinbiel (co)algebra (\cite{LodayVallette})} A \textit{(graded) Zinbiel algebra} is a graded vector space $A$ endowed with a product $\prec : A \otimes A \to A$ satisfying the following relation called \textit{(graded) Zinbiel relation}:
%\begin{align}
%\prec \circ (\prec \otimes \mathrm{id}) = \prec \circ \big(\, \mathrm{id} \, \otimes \prec + \mathrm{id} \otimes (\prec \circ \,  \tau)\big).
%\end{align}
%In term of elements in $A$ the Zinbiel relation writes:
\begin{align*}
(x \prec y) \prec z = x \prec (y \prec z + (-1)^{|x||y|} z \prec y).
\end{align*}
A Zinbiel algebra $(A,\prec)$ is said to be \textit{unital} if there exists an element $1 \in A$ such that the following is verified $1 \prec x = 0$ and $x \prec 1 = x$ for all $x \in A$. Note that $1 \prec 1$ is not defined. Given a Zinbiel algebra $(A,\prec)$, the products $\prec$ and $\succ := \tau \circ \prec$ define a dendriform algebra structure on $A$. Therefore there exists a functor between categories of algebras $\mathbf{Zinb} \to \mathbf{Dend}$. The product $\star := \prec + \prec \circ \, \tau$ on $A$ is associative and commutative. Therefore there exists a functor between categories of algebras $\mathbf{Zinb} \to \mathbf{Com}$.
\vskip 0.2cm
By duality we get the definition of a coZinbiel coalgebra. A \textit{(graded) coZinbiel coalgebra} is a graded vector space $C$ endowed with a coproduct $\Delta_{\prec} : C \to C \otimes C$ satisfying the following relation called \textit{coZinbiel relation}:
\begin{align*}
(\Delta_{\prec} \otimes \mathrm{id}) \circ \Delta_{\prec} = \big(\mathrm{id} \otimes \Delta_{\prec} + \mathrm{id} \otimes (\tau \circ \Delta_{\prec})\big) \circ \Delta_{\prec}.
\end{align*}
\par
A coZinbiel coalgebra $(C,\Delta_{\prec})$ is said to be \textit{counital} if there exists a linear map $c : C \to \Bbbk$ such that the relations $(c \otimes \mathrm{id}) \circ \Delta_{\prec} = 0$ and $(\mathrm{id} \otimes c) \circ \Delta_{\prec} = \mathrm{id}$ are satisfied. Note that $(c \otimes c) \circ \Delta_{\prec}$ is not defined. Given a connected coZinbiel coalgebra $(C,\Delta_{\prec},c)$ the augmentation ideal $\overline{C} := \mathrm{Ker}(c)$ is a (non counital) coZinbiel coalgebra for the reduced coproduct $\Delta_{\prec}^-(x) = \Delta_{\prec}(x) - x \otimes 1$. Conversely, given a non counital coZinbiel coalgebra $(\overline{C},\Delta^-_{\prec})$, the augmented vector space $C := \overline{C} \oplus \Bbbk$ provided with the augmented coproduct $\Delta_{\prec} := \Delta_{\prec}^-(x) + x \otimes 1$ is a counital coZinbiel coalgebra.
\vskip 0.2cm
Let $(\overline{C},\Delta^-_{\prec},)$ be a coZinbiel coalgebra. A filtration of $\overline{C}$ is defined by 
\begin{align*}
F_1\overline{C} &:= \{x \in \overline{C} \, | \, \Delta_{\prec}^-(x) = 0\} \\
F_r\overline{C} &:= \{x \in \overline{C} \, | \, \Delta_{\prec}^-(x) \in F_{r-1}\overline{C} \otimes F_{r-1}\overline{C} \} \,  \text{ for all } \, r >1
\end{align*} 
The coZinbiel coalgebra $\overline{C}$ is said to be \textit{connected} (or \textit{conilpotent}) if $\overline{C} = \bigcup_{r \geq 1} F_r\overline{C}$. The space of \textit{primitive elements of $\overline{C}$} is the first piece of the filtration.
\begin{align*}
\mathrm{Prim}(\overline{C}) := F_1\overline{C}
\end{align*} 
\par
Given a coZinbiel coalgebra $(A,\Delta_{\prec})$, the coproducts $\Delta_{\prec}$ and $\Delta_{\succ} = \tau \circ \Delta_{\prec}$ define a codendriform coalgebra structure on $A$. Therefore there exists a functor between categories of algebras $\mathbf{Zinb}^c \to \mathbf{Dend}^c$. The coproduct $\Delta := \Delta_{\prec} + \tau \circ \Delta_{\prec}$ on $A$ is coassociative and cocommutative. Therefore there exists a functor between categories of algebras $\mathbf{Zinb}^c \to \mathbf{Com}^c$.
\begin{example}
Let $V$ be a graded vector space. Let us define a coZinbiel coproduct on the reduced tensor vector space $\overline{\mathrm{T}}(V)$ by:
\begin{align} \displaystyle
\Delta_{\prec}(x_1 \cdots x_n) := \sum_{p+q = n} \sum_{\sigma \in \mathrm{Sh}_{p,q}^1} \epsilon(\sigma) \, x_{1}x_{\sigma(2)} \cdots x_{\sigma(p)} \otimes x_{\sigma(p+1)} \cdots x_{\sigma(p+q)} \label{half-shuffle coproduct}  
\end{align} 
This coproduct is called the \textit{half-shuffle coproduct}.
\end{example}
\subsection{coZinbiel-associative bialgebra (\cite{Burgunder_graph_complex_and_Leibniz_homology})} A \textit{(graded) coZinbiel-associative bialgebra $(H,\star,\Delta_{\prec})$} is a graded vector space $H$ endowed with a counital coZinbiel coproduct $\Delta_{\prec}: H \to H \otimes H$ and an associative product $\star : H \otimes H \to H$ satisfying the following compatibility relation called \textit{semi-Hopf relation}:
\begin{align*}
\Delta_{\prec} \circ \star = \star_{\otimes} \circ (\Delta \otimes \Delta_{\prec}).  
\end{align*}
\begin{example} \label{Example : Bialgebra Zinb-As T(V)} The graded vector space $\mathrm{T}(V)$ with the concatenation product and the half-shuffle coproduct \eqref{half-shuffle coproduct}.  
\end{example}
\par
%A coZinbiel coalgebra $(C = \Bbbk \oplus \overline{C},\Delta_{\prec})$ is said to be \textit{connected} if $C = \bigcup_{n \geq 0} F_nC$ where $F_0 = \Bbbk1$ and by induction $F_n := \{x \in C \, | \, \overline{\Delta_{\prec}}(x) \in F_{n-1}C \otimes F_{n-1}C\}$ with $\overline{\Delta_{\prec}}(x) = \Delta_{\prec}(x) - x \otimes 1$. 
\subsection{Structure theorem for coZinbiel-associative bialgebras (\cite{Burgunder_graph_complex_and_Leibniz_homology})}\label{Theorem : Hopf-Borel} The \textit{Hopf-Borel theorem} is a structure theorem for (graded) Hopf algebras. This theorem states that a connected commutative graded Hopf algebra over a field of characteristic $0$ is free and cofree over its primitive part. As a consequence such a Hopf algebra is isomorphic to $(\mathrm{S}(V),\cdot,\Delta_{\mathrm{Sh}})$ where $V$ is its primitive part, $\cdot$ the canonical symmetric product on $\mathrm{S}(V)$ and $\Delta_{\mathrm{Sh}}$ the \textit{shuffle coproduct}.
%\begin{theorem}[Hopf-Borel]\label{Theorem : Hopf-Borel}
%\end{theorem}
\vskip 0.2cm
In the case of a coZinbiel-associative bialgebra a similar structure theorem holds. 
\begin{theorem}{\cite{Burgunder_graph_complex_and_Leibniz_homology}} \label{Theorem : Structure theorem for coZinbiel-associative bialgebras}
A connected coZinbiel-associative bialgebra (over a field of any characteristic) is free and cofree over its primitive part. 
\end{theorem}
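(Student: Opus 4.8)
The plan is to identify the free-and-cofree model explicitly and then to show that an arbitrary connected coZinbiel-associative bialgebra is isomorphic to it. First I would record that the object of Example \ref{Example : Bialgebra Zinb-As T(V)}, namely $\mathrm{T}(V)$ with the concatenation product and the half-shuffle coproduct \eqref{half-shuffle coproduct}, is at the same time the free associative algebra on $V$ and the cofree conilpotent coZinbiel coalgebra on $V$ (the latter being the coalgebra dual to the fact that $\overline{\mathrm{T}}(V)$ with the half-shuffle product is the free Zinbiel algebra). A short computation with \eqref{half-shuffle coproduct} shows that its primitive space $\mathrm{Prim}(\mathrm{T}(V))$ is exactly $V = \mathrm{T}^1(V)$. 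Thus the theorem amounts to the assertion that every connected coZinbiel-associative bialgebra $H$ is isomorphic, as a bialgebra, to the model built on $V := \mathrm{Prim}(H)$.

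Second, I would introduce the two canonical comparison maps. Freeness of the associative algebra $\mathrm{T}(\mathrm{Prim}(H))$ lets the inclusion $\mathrm{Prim}(H) \hookrightarrow H$ extend to a unique algebra morphism $\phi : \mathrm{T}(\mathrm{Prim}(H)) \to H$. In the other direction I want a coZinbiel-coalgebra morphism $\psi : H \to \mathrm{T}(\mathrm{Prim}(H))$; by the cofreeness recalled above, such a $\psi$ is produced uniquely from any linear projection $\varpi : \overline{H} \to \mathrm{Prim}(H)$ onto the primitives. I would then check, using the semi-Hopf relation $\Delta_{\prec} \circ \star = \star_{\otimes} \circ (\Delta \otimes \Delta_{\prec})$, that $\phi$ is also a coalgebra morphism and $\psi$ also an algebra morphism, so that both are bialgebra morphisms.

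The crux, and the step I expect to be the main obstacle, is the characteristic-free construction of the projector $\varpi$ (equivalently an idempotent $e$ on $\overline{H}$ with image $\mathrm{Prim}(H)$). Since the field may have positive characteristic, the Eulerian idempotent $\log^{\star}(\mathrm{id})$ is unavailable, so $e$ cannot be obtained from the convolution logarithm. Instead I plan to define $e$ by induction along the conilpotency filtration $F_{1} \subset F_{2} \subset \cdots$ of $\overline{H}$: set $e$ to be the identity on $F_{1} = \mathrm{Prim}(H)$ and, on $F_{r}$, correct $\mathrm{id}$ by a term built from $\star$ applied to $\Delta_{\prec}^{-}$, so that $e$ annihilates the decomposables $\star(\overline{H} \otimes \overline{H})$. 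The semi-Hopf relation is precisely what makes this recursion consistent and guarantees that $e(\overline{H}) \subseteq \mathrm{Prim}(H)$, because it controls $\Delta_{\prec} \circ \star$ in terms of lower filtration pieces.

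Finally I would conclude that $\phi$ and $\psi$ are mutually inverse. The composite $\psi \circ \phi$ is a bialgebra endomorphism of $\mathrm{T}(\mathrm{Prim}(H))$ restricting to the identity on $\mathrm{Prim}(H)$; since $\mathrm{T}(\mathrm{Prim}(H))$ is generated as an algebra by its primitives and the map respects the conilpotency filtration, an induction on filtration degree forces $\psi \circ \phi = \mathrm{id}$. Symmetrically, injectivity of $\psi$ and surjectivity of $\phi$ follow from the connectedness of $H$ by the same filtration induction, yielding the bialgebra isomorphism $H \simeq \mathrm{T}(\mathrm{Prim}(H))$ and hence the freeness and cofreeness claimed.
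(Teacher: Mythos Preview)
The paper does not prove this theorem: it is quoted with a citation to \cite{Burgunder_graph_complex_and_Leibniz_homology} and used as a black box, so there is no ``paper's own proof'' to compare against. Your outline is the standard strategy for structure theorems of generalized bialgebras (in the spirit of Loday--Ronco and Burgunder): exhibit $\mathrm{T}(V)$ as simultaneously free associative and cofree conilpotent coZinbiel, build the comparison maps from freeness/cofreeness, and produce a characteristic-free projector onto the primitives by induction on the coradical filtration using the semi-Hopf compatibility. This is essentially the route taken in the cited reference, so your plan is on target; the only genuinely delicate step you correctly flag is the explicit construction of the idempotent $e$, which in Burgunder's paper is written down combinatorially rather than merely described recursively.
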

As a consequence a connected coZinbiel-associative bialgebra is isomorphic as a bialgebra to $\mathrm{T}(V)$ where $V$ is its primitive part (cf. Example \ref{Example : Bialgebra Zinb-As T(V)}).

\section{$\mathbf{L}$-sets}
In this section we introduce \textit{$\mathbf{L}$-sets}. These objects are cubical sets (cf. \ref{Definition : cubical sets}) for which the cocommutative coalgebra structure on the homology is induced by a coZinbiel coalgebra structure. 
\subsection{$\mathbf{L}$-sets} A \textit{$\mathbf{L}$-set} is a cubical set $\mathrm{X}$ such that $\mathrm{X}_0 = \{\star\}$, and for all $n \in \mathbb{N}$
\begin{align*}
d_{1,0} = d_{1,1} : X_{n} \to X_{n-1}
\end{align*}
%for all $1 \leq k \leq n, \, \epsilon,\epsilon' \in \{0,1\}^k$ : 
%\begin{align}
%d_{1,\epsilon_1} \cdots d_{1,\epsilon_k} = d_{1,\epsilon'_1} \cdots d_{1,\epsilon'_k}
%\end{align}
By definition a $\mathbf{L}$-set is a cubical set, thus there is a functor $\mathrm{U}$ from the category $\mathbf{LSet}$ of $\mathbf{L}$-set to the category $\mathbf{cSet}$ of cubical set
\begin{align*}
\mathrm{U} : \mathbf{LSet} \to \mathbf{cSet}.
\end{align*}
\subsection{The category $\mathbf{L}$} A $\mathbf{L}$-set can be defined as a contravariant functor from a certain category $\mathbf{L}$ to the category $\mathbf{Set}$ in the following way. Let $\mathbf{L}$ be the category with the same objects as $\square$, but with set of morphisms $\mathrm{Hom}_{\mathbf{L}}(\square_m,\square_n)$ from $\square_m$ to $\square_n$ defined as the coequalizer of $d_{1,1}$ and $d_{1,0}$.
\begin{align*}
\xymatrix{ 
\mathrm{Hom}_{\square}(\square_{m+1},\square_n) \ar@<0.5ex>[r]^{d_{1,1}} \ar@<-0.5ex>[r]_{d_{1,0}} & \mathrm{Hom}_{\square}(\square_m,\square_n) \ar@{>>}[r] &   \mathrm{coeq}(d_{1,0},d_{1,1}) =: \mathrm{Hom}_{\mathbf{L}}(\square_m,\square_n) 
 }
\end{align*}
%Let $\sim_{m,n}$ be the equivalence relation on $\mathrm{Hom}_{\square}(\square_m,\square_n)$ generated by :
%\begin{align}
%\sigma_u\delta_{v,\epsilon} \sim_{m,n} \sigma_{u'}\delta_{v',\epsilon'} \Leftrightarrow \big(u = u' \text{ and } v = v' = (1,\dots,1)\big).
%\end{align}
%Let us denote by $\mathbf{L}$ be the quotient category of $\square$ by these families of equivalence relations, then by the universal property of the quotient category 

Thanks to the universal property of a quotient category, there is a projection functor from $\square^{\mathrm{op}}$ to $\mathbf{L}^{\mathrm{op}}$ and each $\mathbf{L}$-set $\mathrm{X}$ factorizes in a unique way through the category $\mathbf{L}^{\mathrm{op}}$.
\begin{align*}
\xymatrix{
\square^{\mathrm{op}} \ar[r]^{\mathrm{X}} \ar[d] &  \mathbf{Set}
\\
\mathbf{L}^{\mathrm{op}} \ar[ru]_{\overline{\mathrm{X}}} &
}
\end{align*}
As a consequence the category of $\mathbf{L}$-set is equivalent to the category of functors from $\mathbf{L}^{\mathrm{op}}$ to $\mathbf{Set}$.
\begin{align*}
\mathbf{LSet} \simeq [\mathbf{L}^{\mathrm{op}},\mathbf{Set}]
\end{align*}

\subsection{The functors $\mathrm{L}$ and $\Gamma$} In this section we construct a left and a right adjoint to the forgetful functor $\mathrm{U}: \mathbf{LSet} \to \mathbf{cSet}$. 
\vskip0.2 cm
Let $\mathrm{L}$ be the functor from $\mathbf{cSet}$ to $\mathbf{LSet}$ defined on objects by $\mathrm{LX}_0 = \{\star\}$ and for all $n \geq 1$ 
\begin{align*} \displaystyle 
\mathrm{LX}_n &= \bigcap_{1 \leq k \leq n} \bigcap_{\epsilon,\epsilon' \in \{0,1\}^k} \mathrm{eq}(d_{1,\epsilon_1}\cdots d_{1,\epsilon_k} ; d_{1,\epsilon'_1} \cdots d_{1,\epsilon'_k}) \subseteq \mathrm{X}_n,
\end{align*}
$\displaystyle \mathrm{LX}(d_{i,\epsilon}) = (d_{i,\epsilon})_{|\mathrm{LX}_n}, \,  \mathrm{LX}(s_{i}) = (s_{i})_{|\mathrm{LX}_n}$ (well defined thanks to the cubical identities), and on morphisms by declaing that $\mathrm{Lt}_0$ is the unique map from $\star$ to $\star$ and $\mathrm{Lt}_n = \mathrm{t}_{|\mathrm{LX}_n}$ for all $n \geq 1$. There is a natural transformation $\mathrm{inc} : \mathrm{L} \to \mathrm{id}_{\mathbf{cSet}}$ from $\mathrm{L}$ to the identity functor induced by the inclusions $\mathrm{LX}_n \subseteq \mathrm{X}_n$.  

\vskip 0.2cm

Let $\Gamma$ be the functor from $\mathbf{cSet}$ to $\mathbf{LSet}$ defined on objects by $\Gamma\mathrm{X}_0 = \{\star\}$ and for all $n \geq 1$
\begin{align*}
\displaystyle \Gamma\mathrm{X}_n =  \mathrm{coeq}(d_{1,0} ; d_{1,1}) 
\end{align*} 
and $\displaystyle \mathrm{\Gamma X}(d_{i,\epsilon}) = \overline{d_{i,\epsilon}}$ (well defined thanks to the cubical idenitites), and on morphisms by declaring that $\mathrm{\Gamma t}_0$ is the unique map from $\star$ to $\star$ and $\mathrm{\Gamma t}_n = \overline{\mathrm{t}_n}$. There is a natural transformation $\mathrm{proj} : \mathrm{id}_{\mathbf{cSet}} \to \Gamma$ from the identity functor to $\Gamma$ induced by the projections $\mathrm{X}_n \twoheadrightarrow \Gamma \mathrm{X}_n$.  
\vskip 0.2cm
There are adjunctions $\mathrm{\Gamma \vdash U}$ and $\mathrm{U \dashv L}$, and the functors $\mathrm{L}$ and $\Gamma$ satisfy the relations $\mathrm{L \circ L = L}, \, \Gamma \circ \Gamma = \Gamma, \, \Gamma \circ \mathrm{L} = \mathrm{L}$ and $\mathrm{L} \circ \Gamma = \Gamma$.
As a consequence a cubical set $\mathrm{X}$ is a $\mathbf{L}$-set if and only if $\mathrm{X = LX = \Gamma X}$.

\subsection{Representable contravariant functors of $\mathbf{L}$} For all $n \in \mathbb{N}$ let us denote by $\mathrm{L}^n$ the representable functor $\mathrm{Hom}_{\mathbf{L}}(-,\square_n)$. By definition $\mathrm{L}^n = \Gamma\square^n$.
\begin{lemma} \label{Proposition : CLn representable}
The restrictions of functors $\mathrm{Q}_n$ and $\mathrm{C}_n$ (cf. \ref{Appendix : Homology of cubical sets}) to $\mathbf{LSet}$ are representable by $\Bbbk.\mathrm{L}^n$.
\end{lemma}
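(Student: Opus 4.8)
The plan is to derive both representability statements from the Yoneda lemma in the presheaf category $[\mathbf{L}^{\mathrm{op}},\mathbf{Set}]$, to which $\mathbf{LSet}$ is equivalent. Since $\mathrm{L}^n = \mathrm{Hom}_{\mathbf{L}}(-,\square_n)$ is by definition the representable functor at the object $\square_n$, Yoneda supplies for every $\mathbf{L}$-set $\mathrm{X}$ a bijection $\mathrm{Hom}_{\mathbf{LSet}}(\mathrm{L}^n,\mathrm{X}) \simeq \mathrm{X}_n$, natural in $\mathrm{X}$, whose universal element is the class $\iota_n$ of $\mathrm{id}_{\square_n}$ in $(\mathrm{L}^n)_n = \mathrm{Hom}_{\mathbf{L}}(\square_n,\square_n)$. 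This single bijection is the engine behind both assertions; what remains is to $\Bbbk$-linearize and to deal with degeneracies.

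First I would treat the unnormalized functor. $\Bbbk$-linearizing the Yoneda bijection yields a natural isomorphism $\mathrm{Q}_n(\mathrm{X}) = \Bbbk[\mathrm{X}_n] \simeq \Bbbk\big[\mathrm{Hom}_{\mathbf{LSet}}(\mathrm{L}^n,\mathrm{X})\big]$. In the sense of acyclic models (Appendix A) this is exactly the statement that the restriction of $\mathrm{Q}_n$ to $\mathbf{LSet}$ is representable with model $\mathrm{L}^n$ and universal element $\iota_n \in \mathrm{Q}_n(\mathrm{L}^n)$: for any $\mathbf{L}$-set $\mathrm{X}$ the family $\{\mathrm{Q}_n(f)(\iota_n) : f \in \mathrm{Hom}_{\mathbf{LSet}}(\mathrm{L}^n,\mathrm{X})\}$ is precisely the tautological basis $\mathrm{X}_n$ of $\Bbbk[\mathrm{X}_n]$. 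Naturality is inherited from that of the Yoneda isomorphism, so nothing further is needed here.

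Next I would handle the normalized functor $\mathrm{C}_n$, obtained from $\mathrm{Q}_n$ by collapsing the degenerate cubes. Here a bare application of Yoneda does not suffice, because the $\iota_n$-orbit only spans $\mathrm{C}_n(\mathrm{X})$ and is subject to the degeneracy relations. The plan is to invoke the normalization theorem for cubical sets: the degenerate subfunctor splits off as a natural direct summand, so that $\mathrm{C}_n$ is a natural retract of $\mathrm{Q}_n$ and therefore, being a summand of a functor representable by $\Bbbk.\mathrm{L}^n$, is itself representable by $\Bbbk.\mathrm{L}^n$. Two compatibilities must be checked: that the degeneracy operators $s_i$ descend along the coequalizer defining $\mathbf{L}$ (guaranteed by the cubical identities already used to make $\mathrm{L}$ and $\Gamma$ well defined), and that the splitting is natural with respect to morphisms of $\mathbf{L}$-sets.

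I expect this last step, the natural splitting of the degenerate part over the quotient category $\mathbf{L}$, to be the main obstacle: it is the only place where the combinatorics of $\mathbf{L}$, as opposed to those of $\square$, genuinely intervene, and where representability could fail if normalization did not interact cleanly with the coequalizer $\mathrm{coeq}(d_{1,0},d_{1,1})$. Everything else reduces formally to the equivalence $\mathbf{LSet} \simeq [\mathbf{L}^{\mathrm{op}},\mathbf{Set}]$ together with $\Bbbk$-linearized Yoneda.
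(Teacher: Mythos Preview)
Your proposal is correct and follows essentially the same route as the paper: Yoneda for $\mathrm{Q}_n$, then the retract argument (Lemma~\ref{the quotient of a representable is representable}) for $\mathrm{C}_n$. The paper makes the splitting explicit, taking $\eta_{\mathrm{X}} := (\mathrm{id} - s_1 d_{1,0}) \cdots (\mathrm{id} - s_n d_{n,0})$; this uses only cubical face and degeneracy operators, so it is already natural on all of $\mathbf{cSet}$ and restricts verbatim to the full subcategory $\mathbf{LSet}$. In other words, your anticipated ``main obstacle'' does not arise: no special combinatorics of the coequalizer defining $\mathbf{L}$ are needed, because the normalization splitting lives one level up on $\mathbf{cSet}$ and descends for free.
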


\begin{proof} 
Let $n \in \mathbb{N}$ be fixed and take as set of models $\mathcal{M}$ the set with one element $\{\Bbbk.\mathrm{L}^n\}$. By definition the functor $\mathrm{Q}_n$ is the composition of the functor $\Bbbk. : \mathbf{Set} \to \Bbbk\mathbf{Mod}$ and the evaluation functor $\mathrm{ev}_{\square_n} : \mathbf{LSet} \to \mathbf{Set}$
\begin{align*}
\mathrm{Q}_n := \Bbbk. \circ \mathrm{ev}_{\square_n}. 
\end{align*}
Let us define a natural transformation $\Psi$ from $\mathrm{Q}_n$ to $\mathrm{\widetilde{Q}}_n$ (cf. \ref{Appendix : Representable functor}) by the formula
\begin{align*}
\Psi_{\mathrm{X}} : \mathrm{Q}_n(\mathrm{X}) \to \mathrm{\widetilde{Q}}_n(\mathrm{X}) \, ; \, \Psi_{\mathrm{X}}(x) = (\phi_x,\overline{\mathrm{id}_{\square_n}}),
\end{align*}
where $\phi_x$ is the unique natural transformation from $\mathrm{L}^n$ to $\mathrm{X}$ such that $(\phi_x)_n(\overline{\mathrm{id}_{\square_n}}) = x$ (Yoneda's Lemma). We have $\Phi \circ \Psi = \mathrm{id}$ so $\mathrm{Q}_n : \mathbf{LSet} \to \Bbbk\mathbf{Mod}$ is representable.
\vskip 0.2cm
The representability of $\mathrm{C}_n$ is a consequence of Lemma \ref{the quotient of a representable is representable}. Indeed, let $\xi : \mathrm{Q}_n \to \mathrm{C}_n$ be the natural transformation defined by passing to the quotient, and let $\eta : \mathrm{C}_n \to \mathrm{Q}_n$ be the natural transformation defined by 
\begin{align*}
\eta_{\mathrm{X}} := (\mathrm{id} - s_1 d_{1,0}) \cdots  (\mathrm{id} - s_n d_{n,0}). 
\end{align*} 
Thanks to the cubical identities this map sends $\mathrm{D}_n(\mathrm{X})$ (cf. \ref{Appendix : Homology of cubical sets}) to $0$ and so is well defined. Moreover $\xi \circ \eta = \mathrm{id}$ then by Lemma \ref{the quotient of a representable is representable} the functor $\mathrm{C}_n$ is representable. 
\end{proof}

\begin{lemma} \label{Computation of H(Ln)}
For all $n \in \mathbb{N}$, $\mathrm{H}_{\bullet}(\mathrm{L}^n,\Bbbk) =
\left\{\begin{array}{ll}
0 & \text{if } \, \bullet \neq 1,\\
\Bbbk^{n} & \text{if } \, \bullet = 1. 
\end{array}\right.
$
\end{lemma}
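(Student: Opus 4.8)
The plan is to compute the normalized chain complex $(\mathrm{C}_\bullet(\mathrm{L}^n),\partial)$ of the $\mathbf{L}$-set $\mathrm{L}^n = \Gamma\square^n = \mathrm{Hom}_{\mathbf{L}}(-,\square_n)$ explicitly and read off its homology. First I would describe the non-degenerate cubes. A non-degenerate $m$-cube of the standard cube $\square^n$ is a face of the $n$-cube, i.e. a choice of $m$ ``variable'' coordinates $p_1 < \cdots < p_m$ in $\{1,\dots,n\}$ together with constants in $\{0,1\}$ on the remaining $n-m$ coordinates. Passing to $\mathrm{L}^n$ amounts to forming, in each degree, the coequalizer of $d_{1,0}$ and $d_{1,1}$; unwinding this coequalizer one finds that the induced equivalence relation identifies two faces exactly when they differ only by the $\{0,1\}$-values assigned to the constant coordinates lying strictly before the first variable coordinate $p_1$, the monotonicity constraint on morphisms of $\square$ forcing the flipped coordinate to precede all variables. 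Since such a flip preserves the number of variables, no non-degenerate face becomes degenerate or gets identified with a lower one, so $\mathrm{C}_m(\mathrm{L}^n)$ has a basis indexed by these ``flip-classes'' of faces.

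The crucial feature of the $\mathbf{L}$-set structure is that $d_{1,0}=d_{1,1}$, so the $i=1$ summand of the cubical boundary $\partial = \sum_{i}(-1)^i(d_{i,0}-d_{i,1})$ vanishes identically; in particular $\partial_1 = 0$ and, more importantly, the first variable coordinate $p_1$ is never turned into a constant by $\partial$. Consequently the integer $\ell := p_1 - 1 \in \{0,\dots,n-1\}$, the number of collapsed leading coordinates, is preserved by the differential, and I would split $\mathrm{C}_\bullet(\mathrm{L}^n)$ in degrees $\geq 1$ as a direct sum of subcomplexes $\bigoplus_{\ell=0}^{n-1}\mathrm{C}^{(\ell)}_\bullet$ indexed by $\ell$.

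Finally I would identify each summand with a shifted cube complex. Freezing the first variable at position $\ell+1$ and reading off the remaining coordinates $\ell+2,\dots,n$ (which are genuinely free, since only the leading constants get collapsed) gives an isomorphism $\mathrm{C}^{(\ell)}_{1+k} \cong \mathrm{C}_k(\square^{\,n-\ell-1})$ under which $\partial$ becomes, up to sign, the ordinary cubical boundary of $\square^{\,n-\ell-1}$, the index shift $i\mapsto i-1$ accounting for the frozen variable. Because a cube is acyclic, $\mathrm{H}_\bullet(\square^{\,n-\ell-1},\Bbbk)$ is $\Bbbk$ in degree $0$ and $0$ otherwise, so each $\mathrm{C}^{(\ell)}_\bullet$ contributes exactly one copy of $\Bbbk$ in degree $1$. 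Summing over the $n$ values of $\ell$ yields $\mathrm{H}_1(\mathrm{L}^n,\Bbbk) = \Bbbk^n$ and $\mathrm{H}_m(\mathrm{L}^n,\Bbbk)=0$ for $m\geq 2$, while $\mathrm{H}_0=0$ because the reduced complex is trivial in degree $0$ (the single basepoint) and $\partial_1=0$. The main obstacle is the bookkeeping in the first step: correctly unwinding the coequalizer defining $\mathrm{Hom}_{\mathbf{L}}(\square_m,\square_n)$ to pin down the flip relation, and checking that it neither creates new degeneracies nor touches the free constant coordinates of the suffix, so that the clean $\ell$-decomposition and the identification with cube complexes go through.
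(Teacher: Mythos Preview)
Your argument is correct and takes a genuinely different route from the paper. The paper proceeds topologically: it identifies the cubical homology of $\mathrm{L}^n$ with the singular homology of the geometric realization $|\mathrm{L}^n|$ (a quotient of $[0,1]^n$), and then runs an iterated Mayer--Vietoris argument, splitting off one $\Bbbk$ in degree $1$ for each coordinate direction. Your approach is purely combinatorial: you describe the non-degenerate cells of $\mathrm{L}^n$ directly, observe that the vanishing of the $i=1$ summand in $\partial$ makes the position $\ell=p_1-1$ of the first variable a grading preserved by the differential, and then identify each $\ell$-piece with a shifted normalized cube complex $\mathrm{C}_{\bullet-1}(\square^{\,n-\ell-1})$, which is acyclic. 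This avoids invoking the comparison with singular homology and is arguably more elementary; the paper's proof, on the other hand, makes the geometry of the quotient cube transparent and requires no bookkeeping about the coequalizer.

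Two minor remarks. Your justification ``the monotonicity constraint on morphisms of $\square$'' is not quite the right way to phrase why the flipped coordinate must precede all variables: the point is simply that $\delta_{1,\epsilon}$ freezes the \emph{first} variable of $f$, so the coordinate that gets flipped is $q_1$, which lies below the surviving variable set $\{q_2,\dots,q_{m+1}\}=\{p_1,\dots,p_m\}$. Second, your sentence about $\mathrm{H}_0$ is garbled: with the paper's conventions one has $\mathrm{C}_0(\mathrm{L}^n)=\Bbbk$ and $\partial_1=0$, so the unreduced $\mathrm{H}_0$ is $\Bbbk$, not $0$; the lemma as stated is really about reduced homology (and in any case only the vanishing in degrees $\geq 2$ is used in the acyclic models argument that follows).
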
 

\begin{proof} Let $n \in \mathbb{N}$ be fixed. The homology of $\mathrm{L}^n$ is isomorphic to the singular homology of its geometric realization $| \mathrm{L}^n |$. This topological space is homeomorphic to the quotient of $[0,1]^n$ by the equivalence relation identifying $(d_{1,\epsilon_1} \cdots d_{1,\epsilon_k})(x)$ and $(d_{1,\epsilon'_1} \cdots d_{1,\epsilon'_k})(x)$ for all $x \in [0,1]^n, \, 1 \leq k \leq n$ and $\epsilon_i,\epsilon'_i \in \{0,1\}$. 
\vskip 0.2cm
Let $\{A_i,B_i\}_{1 \leq i n}$ be a family of subspace of $|\mathrm{L}^n|$ defined by :
\begin{itemize}
\item $A_i := p(\{(x_1,\dots,x_n) \in [0,1]^n \, | \, x_i \neq \frac{1}{2}\})$,
\item $B_i := p(\{(x_1,\dots,x_n) \in [0,1]^n \, | \, \frac{1}{4} \leq x_i \leq \frac{3}{4} \})$. 
\end{itemize}
where $p : [0,1]^n \to |\mathrm{L}^n|$ is the projection. Thanks to the properties
\begin{itemize}
\item $A_1 \cup B_1 = |\mathrm{L}^n|$,
\item $A_1 \cap B_1 \simeq \{\star_1,\star_2\}$,
\item $B_1 \simeq \{\star\}$,
\end{itemize}
the Mayer-Vietoris exact sequence applied to the covering $| \mathrm{L}^n | = A_1 \cup B_1$
\begin{align*}
\cdots \to \mathrm{H}_{p+1}(| \mathrm{L}^n |) \to \mathrm{H}_p(A_{1} \cap B_{1}) \to \mathrm{H}_p(A_{1}) \oplus \mathrm{H}_p(B_{1}) \to \mathrm{H}_{p}(|\mathrm{L}^n|) \to \mathrm{H}_{p-1}(A_{1} \cap B_{1}) \to \cdots,
\end{align*}
proves that 
\begin{align*}
\mathrm{H}_{p}(|\mathrm{L}^n|) = 
\left\{\begin{array}{ll}
\mathrm{H}_{p}(A_1) & \text{if } p > 1, \\ 
\mathrm{H}_1(A_1) \oplus \Bbbk & \textit{if } p = 1.
\end{array}\right.
\end{align*}
Thanks to the properties
\begin{itemize}
\item $A_{2} \cup B_{2} = | \mathrm{L}^n |$,
\item $(A_1 \cap A_{2}) \cap (A_1 \cap B_{2}) \simeq \{\star_1,\star_2\}$,
\item $A_1 \cap B_2 \simeq \{\star\}$,
\end{itemize}
the Mayer-Vietoris exact sequence applied to the covering $A_1 = (A_1 \cap A_2) \cup (A_1 \cap B_2)$ proves that 
\begin{align*}
\mathrm{H}_{p}(A_1) = 
\left\{\begin{array}{ll}
\mathrm{H}_{p}(A_1 \cap A_2) & \text{if } p > 1, \\ 
\mathrm{H}_1(A_1 \cap A_2) \oplus \Bbbk & \textit{if } p = 1.
\end{array}\right.
\end{align*}
Applying successively this procedure leads to :
\begin{align*}
\mathrm{H}_{p}(| \mathrm{L}^n |) = 
\left\{\begin{array}{ll}
\mathrm{H}_{p}(A_1 \cap \cdots \cap A_n) & \text{if } p > 1, \\ 
\mathrm{H}_1(A_1 \cap \cdots \cap A_n) \oplus \Bbbk^{n-1} & \textit{if } p = 1.
\end{array}\right.
\end{align*}
Therefore 
\begin{align*}
\mathrm{H}_{p}(| \mathrm{L}^n |) = 
\left\{\begin{array}{ll}
0 & \text{if } p > 1, \\ 
\Bbbk^{n} & \textit{if } p = 1.
\end{array}\right.
\end{align*}
\end{proof}

\subsection{$\mathbf{L}$-homology of cubical sets} Let $\mathrm{X}$ be a cubical set. For all $n \in \mathbb{N}$ let us denote by $\mathrm{QL}_{n}(\mathrm{X})$ the module $\mathrm{Q}_n(\mathrm{LX})$, and by $\mathrm{CL}_{n}(\mathrm{X})$ the quotient of $\mathrm{QL}_{n}(\mathrm{X})$ by the subspace $\mathrm{DL}_n(\mathrm{X}) := \mathrm{D}_n(\mathrm{X}) \cap \mathrm{QL}_n(\mathrm{X})$. These constructions are natural in $\mathrm{X}$ and so induce functors
\begin{align*}
\mathrm{QL}_{n} : \mathbf{cSet} \to \mathbf{\Bbbk Mod} \, \text{ and } \, \mathrm{CL}_{n} : \mathbf{cSet} \to \mathbf{\Bbbk Mod}. 
\end{align*} 
Thanks to cubical identities, the degree $-1$ graded map $d = \sum_{i=1}^n (-1)^{i+1} (d_{i,1}-d_{i,0})$ defines chain complex structures on graded modules $\mathrm{QL}_{\bullet}(\mathrm{X})$ and $\mathrm{CL}_{\bullet}(\mathrm{X})$. These constructions are natural in $\mathrm{X}$ and so induce functors
\begin{align*}
\mathrm{QL}_{\bullet} : \mathbf{cSet} \to \mathbf{Ch}^+ \, \text{ and } \, \mathrm{CL}_{\bullet} : \mathbf{cSet} \to \mathbf{Ch}^+. 
\end{align*} 
\par
The \textit{unormalized $\mathbf{L}$-homology of} $\mathrm{X}$, denoted $\mathrm{HL^{u}_{\bullet}}(\mathrm{X})$, is the homology of the chain complex $\mathrm{QL}_{\bullet}(\mathrm{X})$. The \textit{$\mathbf{L}$-homology of} $\mathrm{X}$, denoted $\mathrm{HL_{\bullet}}(\mathrm{X})$, is the homology of the chain complex $\mathrm{CL}_{\bullet}(\mathrm{X})$.

\subsection{A long exact sequence relating $\mathrm{H}_{\bullet}$ and $\mathrm{HL}_{\bullet}$} The natural transformation $\mathrm{inc : L \to id_{\mathbf{cSet}}}$ induces natural transformations $\mathrm{C_{\bullet}(inc)}$ from $\mathrm{CL}_{\bullet}$ to $\mathrm{C_{\bullet}}$ and $\mathrm{H_{\bullet}(inc)}$ from $\mathrm{HL}_{\bullet}$ to $\mathrm{H_{\bullet}}$ . Then we get a short exact sequence (natural in $\mathrm{X}$) in the category of chain complexes
\begin{align*}
\mathrm{CL_{\bullet}(X)} \hookrightarrow  \mathrm{C_{\bullet}(X)} \twoheadrightarrow \mathrm{C^{rel}_{\bullet}(X)} := \mathrm{C_{\bullet}(X)}/\mathrm{CL_{\bullet}(X)}, 
\end{align*}
which induces a long exact sequence (natural in $\mathrm{X}$) 
\begin{align}
\cdots \to \mathrm{H^{rel}_{n+1}(X)} \to \mathrm{HL_{n}(X)} \to \mathrm{H_{n}(X)} \to \mathrm{H^{rel}_{n}(X)} \to \cdots \label{long exact sequence linking HL and H}
\end{align}

\subsection{A long exact sequence relating $\mathrm{H}_{\bullet}$ and $\mathrm{H}\Gamma_{\bullet}$} The natural transformation $\mathrm{proj : id_{\mathbf{cSet}} \to} \Gamma$ induces natural transformations $\mathrm{C_{\bullet}(proj)}$ from $\mathrm{C}_{\bullet}$ to $\mathrm{C}\Gamma_{\bullet}$ and $\mathrm{H_{\bullet}(proj)}$ from $\mathrm{H}_{\bullet}$ to $\mathrm{H}\Gamma_{\bullet}$ . Then we get a short exact sequence (natural in $\mathrm{X}$) in the category of chain complexes
\begin{align*}
\mathrm{Ker\big(C_{\bullet}(proj)\big)} =: \mathrm{C^{sub}_{\bullet}(X)} \hookrightarrow  \mathrm{C_{\bullet}(X)} \twoheadrightarrow \mathrm{C}\Gamma_{\bullet}(X), 
\end{align*}
which induces a long exact sequence (natural in $\mathrm{X}$) 
\begin{align*}
\cdots \to \mathrm{H^{sub}_{n}(X)} \to \mathrm{H_{n}(X)} \to \mathrm{H}\Gamma_{n}(X) \to \mathrm{H^{sub}_{n-1}(X)} \to \cdots 
%\label{long exact sequence linking HGamma and H}
\end{align*}

\subsection{A differential graded dendriform coalgebra structure on the Leibniz complex} Let $\mathrm{X}$ be a cubical set. Let us define two degree $0$ maps $\Delta_{\succ}(\mathrm{X})$ and $\Delta_{\prec}(\mathrm{X})$ from $\mathrm{CL}_{\bullet}(\mathrm{X})$ to $\mathrm{CL}_{\bullet}(\mathrm{X}) \otimes \mathrm{CL}_{\bullet}(\mathrm{X})$ by the following formulas :
\begin{align}\displaystyle
\Delta_{\succ}(\mathrm{X)}_n(x) = \bigoplus_{p+q = n} \sum_{\sigma \in \mathrm{Sh}_{p+q}^{1}} \epsilon(\sigma) \, (d_{\sigma(p+1),0}\cdots d_{\sigma(p+q),0})(x) \otimes (d_{1,1} \cdots d_{\sigma(p),1})(x) \label{Deltasucc}
\end{align}
and
\begin{align}\displaystyle
\Delta_{\prec}(\mathrm{X})_n(x) =  \bigoplus_{p+q = n} \sum_{\sigma \in \mathrm{Sh}_{p+q}^{p+1}} \epsilon(\sigma) \, (d_{1,0}\cdots d_{\sigma(p+q),0})(x) \otimes (d_{\sigma(1),1} \cdots d_{\sigma(p),1})(x) \label{Deltaprec} 
\end{align}
for all $x \in \mathrm{X}_n$ and $n \geq 2$. With enough stamina it is possible to check that these formulas define chain complex morphisms. A more conceptual way to prove that such chain morphisms exist and are homotopy unique is to use once again the acyclic models method : Indeed, define $\Delta_{\succ}$ and $\Delta_{\prec}$ in dimensions $2$ by the formulas \eqref{Deltasucc} and \eqref{Deltaprec}. These maps induce natural transformations between homology functors 
\begin{align*}
(\Delta_{\prec})_{2} \, , \, (\Delta_{\succ})_2: \mathrm{HL}_{2}(-,\Bbbk) \to \mathrm{HL}_{1}(-,\Bbbk) \otimes \mathrm{HL}_{1}(-,\Bbbk).
\end{align*}
%It is easy to check that these formulas define maps of dimensions $3$ from $\mathrm{CL}_{\bullet}$ to $\mathrm{CL}_{\bullet} \otimes \mathrm{CL}_{\bullet} \circ \delta$. 
To define $\Delta_{\succ}$ and $\Delta_{\prec}$ in higher dimensions we use the acyclic models method. By Proposition \ref{Proposition : CLn representable} the functor $\mathrm{CL}_{n}$ is representable by $\mathrm{L}^n$ for all $n \in \mathbb{N}$, and by Proposition \ref{Computation of H(Ln)} the homology groups $\mathrm{H}_{p}\big(\mathrm{CL}_{\bullet}(\mathrm{L}^n) \otimes \mathrm{CL}_{\bullet}(\mathrm{L}^n)\big)$ vanish for all $p > 2$ and $n \in \mathbb{N}$. The acyclic models theorems \ref{Theorem : acyclic models theorem 1} and \ref{Theorem : acyclic models theorem 2} imply that there are homotopy unique natural transformations $\Delta_{\succ}$ and $\Delta_{\prec}$ extending those already defined in dimensions $2$.
%\begin{align}
%\Delta_{\succ} := (\mathrm{AW}^{\succ}_{\delta}) \circ \mathrm{CL}_{\bullet}(\gamma) \circ \mathrm{D} \, \text{ and } \, \Delta_{\prec} := (\mathrm{AW}^{\prec}_{\delta}) \circ \mathrm{CL}_{\bullet}(\gamma) \circ \mathrm{D}
%\end{align}
%where $\mathrm{D}$ is the natural transformation from $\mathrm{CL}_{\bullet}$ to $\mathrm{CL_{\bullet}} \circ \otimes \circ \delta$ induced by the diagonal map $x \mapsto (x,x)$; and $\gamma$ is the natural equivalence from $\Bbbk. \circ \otimes$ and $\Bbbk. \otimes \Bbbk.$.   
\begin{theorem} \label{Theorem : Zinbiel up to homotopy coalgebra structure on Leibniz homology}
Let $X$ be a cubical set. Then $(\mathrm{CL_{\bullet}(X)},\Delta_{\succ},\Delta_{\prec})$ is a homotopy coZinbiel coalgebra (in the category of chain complexes).
\end{theorem}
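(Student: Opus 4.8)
The plan is to reduce every defining relation of a coZinbiel coalgebra to a computation in a single low dimension by means of the acyclic models method, exactly as was done to construct $\Delta_{\succ}$ and $\Delta_{\prec}$ themselves. The two ingredients are already in place: by Lemma \ref{Proposition : CLn representable} the functor $\mathrm{CL}_n$ is representable by $\Bbbk.\mathrm{L}^n$ for every $n$, so $\mathrm{CL}_{\bullet}$ is an admissible source functor for the acyclic models theorems \ref{Theorem : acyclic models theorem 1} and \ref{Theorem : acyclic models theorem 2}; and by Lemma \ref{Computation of H(Ln)} one has $\mathrm{H}_{\bullet}(\mathrm{CL}_{\bullet}(\mathrm{L}^n)) \cong \Bbbk^n$ concentrated in degree $1$. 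Applying the Künneth formula, the $k$-fold tensor power $\mathrm{CL}_{\bullet}(\mathrm{L}^n)^{\otimes k}$ has homology concentrated in degree $k$; in particular its homology vanishes in all degrees strictly greater than $k$. Thus each tensor-power functor $\mathrm{CL}_{\bullet}^{\otimes k}$ is acyclic on models above degree $k$, and by the uniqueness theorem \ref{Theorem : acyclic models theorem 2} any two natural chain maps $\mathrm{CL}_{\bullet} \to \mathrm{CL}_{\bullet}^{\otimes k}$ that agree in dimensions $\le k$ are naturally chain homotopic.

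First I would dispose of the identity $\Delta_{\succ} \simeq \tau \circ \Delta_{\prec}$, which is what upgrades the homotopy codendriform structure to a homotopy coZinbiel one. Both $\Delta_{\succ}$ and $\tau \circ \Delta_{\prec}$ are natural chain maps $\mathrm{CL}_{\bullet} \to \mathrm{CL}_{\bullet}^{\otimes 2}$, whose common target is acyclic on models above degree $2$. By the observation above it suffices to compare them in dimension $2$, and there they are given explicitly by \eqref{Deltasucc} and \eqref{Deltaprec}: the sum defining $\Delta_{\succ}$ runs over $\mathrm{Sh}_{p,q}^{1}$ while the one defining $\Delta_{\prec}$ runs over $\mathrm{Sh}_{p,q}^{p+1}$, and the bijection $\iota : \mathrm{Sh}_{p,q} \simeq \mathrm{Sh}_{q,p}$ of the Shuffle subsection carries one index set to the other while interchanging the two tensor slots, i.e. it realizes precisely the flip $\tau$. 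Hence the two maps coincide in dimension $2$ and therefore agree up to homotopy in every dimension.

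Next I would treat the codendriform relations; since $\Delta_{\succ} \simeq \tau \circ \Delta_{\prec}$, these reduce to the single coZinbiel relation $(\Delta_{\prec} \otimes \mathrm{id}) \circ \Delta_{\prec} \simeq (\mathrm{id} \otimes \Delta_{\prec} + \mathrm{id} \otimes \tau \circ \Delta_{\prec}) \circ \Delta_{\prec}$. Its two sides are natural chain maps $\mathrm{CL}_{\bullet} \to \mathrm{CL}_{\bullet}^{\otimes 3}$, whose target is acyclic on models above degree $3$; moreover both sides vanish in dimensions below $3$ because a nonzero term of the triple coproduct forces all three tensor factors to have positive degree. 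It therefore suffices to verify the relation in dimension $3$, in the multidegree $(1,1,1)$, where everything is given explicitly by \eqref{Deltasucc} and \eqref{Deltaprec}. The comparison is a finite manipulation of iterated shuffle sums: I would expand each side and use the bijections $\alpha : \mathrm{Sh}_{p+q,r} \times \mathrm{Sh}_{p,q} \simeq \mathrm{Sh}_{p,q,r}$ and $\beta : \mathrm{Sh}_{p,q,r} \simeq \mathrm{Sh}_{p,q+r} \times \mathrm{Sh}_{q,r}$ of the Shuffle subsection to reindex the doubly-indexed sums as single sums over $(p,q,r)$-shuffles, matching terms and signs $\epsilon(\sigma)$ on the two sides. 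By Theorem \ref{Theorem : acyclic models theorem 2} the dimension-$3$ agreement propagates to a natural chain homotopy in all dimensions, which is exactly the coZinbiel relation up to homotopy.

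The main obstacle is the low-dimensional combinatorial check in the third step. The acyclic models machinery does the heavy lifting of turning an infinite family of identities into one finite verification, but correctly organizing the iterated shuffle sums---tracking the face-map compositions $d_{i,\epsilon}$ together with the Koszul signs $\epsilon(\sigma)$ under the reindexing bijections $\alpha$ and $\beta$---is the delicate point; this is the codendriform analogue of the classical shuffle computation showing that the half-shuffle coproduct \eqref{half-shuffle coproduct} is coZinbiel. A secondary, purely formal, point to be careful about is the verification of the hypotheses of the acyclic models theorems for the tensor-power functors $\mathrm{CL}_{\bullet}^{\otimes k}$, which rests on combining the Künneth formula with the homology computation of Lemma \ref{Computation of H(Ln)}.
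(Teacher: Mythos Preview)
Your overall strategy via acyclic models is the same as the paper's, and your treatment of the codendriform relations in dimension $3$ (using the bijections $\alpha$ and $\beta$ and then invoking Theorem \ref{Theorem : acyclic models theorem 2}) matches the paper's argument. The genuine gap is in your verification that $\Delta_{\succ}\simeq\tau\circ\Delta_{\prec}$.

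You claim that the bijection $\iota:\mathrm{Sh}_{p,q}\to\mathrm{Sh}_{q,p}$ makes $\Delta_{\succ}$ and $\tau\circ\Delta_{\prec}$ \emph{coincide} in dimension $2$. This is false. The bijection $\iota$ does match the index sets $\mathrm{Sh}_{p,q}^{1}$ and $\mathrm{Sh}_{q,p}^{q+1}$, but it does not exchange the $\epsilon$-labels on the face maps: in both $\Delta_{\succ}$ and $\Delta_{\prec}$ the first tensor factor is built from $d_{-,0}$'s and the second from $d_{-,1}$'s, so after applying $\tau$ the $0$'s and $1$'s sit in the wrong slots. Concretely, in bidegree $(1,1)$ one finds (using $d_{1,0}=d_{1,1}$ on an $\mathbf{L}$-set)
\[
\Delta_{\succ}(x)-(\tau\circ\Delta_{\prec})(x)=(d_{2,0}-d_{2,1})x\otimes d_{1,0}x\neq 0,
\]
which is nonzero but equals $d\big(x\otimes d_{1,0}x\big)$ in $\mathrm{CL}_{\bullet}\otimes\mathrm{CL}_{\bullet}$. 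The paper proceeds exactly this way: it computes the difference in dimension $2$, exhibits the explicit homotopy $h(x)=d_{1,0}x\otimes x$ (equivalently $x\otimes d_{1,0}x$, depending on tensor conventions) in dimensions $\le 2$, and only then invokes Theorem \ref{Theorem : acyclic models theorem 2} to extend $h$ to all dimensions. Without this explicit low-degree homotopy the acyclic models argument cannot start, since the target $\mathrm{CL}_{\bullet}^{\otimes 2}$ has nontrivial homology precisely in degree $2$ on the models $\mathrm{L}^n$.
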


\begin{proof} %\
%\begin{itemize}
%\item 
The maps $(\mathrm{id} \otimes \Delta_{\succ}) \circ \Delta_{\succ}$ and $(\Delta \otimes \mathrm{id}) \circ \Delta_{\succ}$ are two maps from $\mathrm{CL_{\bullet}}$ to $\mathrm{CL_{\bullet} \otimes CL_{\bullet} \otimes CL_{\bullet}}$ which coincide in degree $3$. The acyclic model theorem \ref{Theorem : acyclic models theorem 2} implies that these two maps are homotopic. 
%\begin{align}
%(\mathrm{id} \otimes \Delta_{\succ}) \circ \Delta_{\succ} \simeq (\Delta \otimes \mathrm{id}) \circ \Delta_{\succ}
%\end{align}
In the same way we prove that $(\mathrm{id} \otimes \Delta_{\prec}) \circ \Delta_{\succ} \simeq(\Delta_{\succ} \otimes \mathrm{id}) \circ \Delta_{\prec}$ and $(\Delta_{\prec} \otimes \mathrm{id}) \circ \Delta_{\prec} \simeq (\mathrm{id} \otimes \Delta) \circ \Delta_{\prec}$. Therefore $(\mathrm{CL_{\bullet}(X)},\Delta_{\succ},\Delta_{\prec})$ is a homotopy codendriform coalgebra.
%\\
%\item 
\vskip 0.2cm
First we prove that there exist a homotopy in dimension $2$ from $\Delta_{\succ}$ to $\tau \circ \Delta_{\prec}$. Then for higher dimensions the result will be a consequence of the acyclic models theorem \ref{Theorem : acyclic models theorem 2}. 
%\par
%In dimension $1$ we have
%\begin{align}
%\Delta_\succ(\mathrm{X})_1(x) - (\tau \circ \Delta_{\prec})(\mathrm{X})_1(x) = 1 \otimes x - 1 \otimes x = 0.
%\end{align}
%\par
In dimension $2$ we have 
\begin{align*}
\Delta_\succ(\mathrm{X})_2(x) - (\tau \circ \Delta_{\prec})(\mathrm{X})_2(x) &= 
(
%1 \otimes x
 d_{1,0}x \otimes d_{2,1}x) - (
 %x \otimes 1
  - d_{1,1}x \otimes d_{2,0}x)
= d_{1,0}x \otimes dx
= (\mathrm{id} \otimes d)(d_{1,0}x \otimes x).
\end{align*}
Then $\mathrm{h} : \mathrm{CL}_{\bullet} \to \big(\mathrm{CL}_{\bullet} \otimes \mathrm{CL}_{\bullet}\big)[1]$ defined by 
%\begin{align}
$\mathrm{h(X)}_n(x) := 
%\left\{
%\begin{array}{ll}
%0 & \text{if } n = 1,\\
d_{1,0}x \otimes x $
%& \text{if } n = 2.
%\end{array}\right.
%\end{align}  
is a homotopy of dimensions $\leq 2$ from $\Delta_{\succ}$ to $\tau \circ  \Delta_{\prec}$. The acyclic model theorem \ref{Theorem : acyclic models theorem 2} implies that the maps $\Delta_{\succ}$ and $\tau \circ  \Delta_{\prec}$ are homotopic.
%\begin{align}
%\Delta_{\succ} \simeq \tau \circ  \Delta_{\prec}
%\end{align}
As a consequence $(\mathrm{CL_{\bullet}(X)},\Delta_{\succ},\Delta_{\prec})$ is a homotopy coZinbiel coalgebra.
%\end{itemize}
\end{proof}

\begin{corollary} Let $\mathrm{X}$ be a cubical set. Then $(\mathrm{HL}_{\bullet}(X,\Bbbk),\Delta)$ is a connected  coZinbiel coalgebra (in the category of graded vector space). 
\end{corollary}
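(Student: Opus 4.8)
The plan is to transfer the homotopy coZinbiel structure carried by the chain complex $\mathrm{CL}_\bullet(\mathrm{X})$, established in Theorem \ref{Theorem : Zinbiel up to homotopy coalgebra structure on Leibniz homology}, to a strict graded coZinbiel structure on its homology, using the one fact that homotopic chain maps induce the same map in homology. First I would invoke the Künneth theorem: since we work over a field $\Bbbk$, there are natural isomorphisms $\mathrm{H}_\bullet\big(\mathrm{CL}_\bullet(\mathrm{X})\otimes\mathrm{CL}_\bullet(\mathrm{X})\big)\cong\mathrm{HL}_\bullet(\mathrm{X},\Bbbk)\otimes\mathrm{HL}_\bullet(\mathrm{X},\Bbbk)$ and likewise for the triple tensor product. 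Through these identifications the chain maps $\Delta_\prec,\Delta_\succ$ of \eqref{Deltaprec} and \eqref{Deltasucc} descend to degree-$0$ coproducts $\Delta_\prec,\Delta_\succ:\mathrm{HL}_\bullet(\mathrm{X},\Bbbk)\to\mathrm{HL}_\bullet(\mathrm{X},\Bbbk)^{\otimes 2}$.

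Next I would promote the chain-level homotopies to strict identities in homology. Each codendriform relation asserts that two parallel chain maps $\mathrm{CL}_\bullet(\mathrm{X})\to\mathrm{CL}_\bullet(\mathrm{X})^{\otimes 3}$ are homotopic; applying $\mathrm{H}_\bullet$ and the Künneth isomorphism, they induce equal maps $\mathrm{HL}_\bullet\to\mathrm{HL}_\bullet^{\otimes 3}$, so the three codendriform relations hold on the nose. Similarly, the homotopy $\Delta_\succ\simeq\tau\circ\Delta_\prec$ from the proof of Theorem \ref{Theorem : Zinbiel up to homotopy coalgebra structure on Leibniz homology} becomes the equality $\Delta_\succ=\tau\circ\Delta_\prec$ in homology. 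Substituting this into the first codendriform relation $(\Delta_\prec\otimes\mathrm{id})\circ\Delta_\prec=(\mathrm{id}\otimes\Delta_\prec+\mathrm{id}\otimes\Delta_\succ)\circ\Delta_\prec$ yields exactly the coZinbiel relation $(\Delta_\prec\otimes\mathrm{id})\circ\Delta_\prec=\big(\mathrm{id}\otimes\Delta_\prec+\mathrm{id}\otimes(\tau\circ\Delta_\prec)\big)\circ\Delta_\prec$. Hence $\big(\mathrm{HL}_\bullet(\mathrm{X},\Bbbk),\Delta_\prec\big)$ is a strict graded coZinbiel coalgebra, with the cocommutative coproduct $\Delta=\Delta_\prec+\tau\circ\Delta_\prec$ induced on homology.

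It remains to verify counitality and connectedness, both of which I would read off from the grading and the half-shuffle shape \eqref{half-shuffle coproduct} of $\Delta_\prec$. Because $\mathrm{LX}_0=\{\star\}$ we have $\mathrm{HL}_0(\mathrm{X},\Bbbk)\cong\Bbbk$, and the projection $c:\mathrm{HL}_\bullet(\mathrm{X},\Bbbk)\to\mathrm{HL}_0(\mathrm{X},\Bbbk)\cong\Bbbk$ serves as counit: the left tensor factor of $\Delta_\prec$ always has positive degree, giving $(c\otimes\mathrm{id})\circ\Delta_\prec=0$, while the only degree-$0$ right factor contributes the term $x\otimes 1$, giving $(\mathrm{id}\otimes c)\circ\Delta_\prec=\mathrm{id}$. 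For connectedness I would argue by induction on the degree using the filtration of the augmentation ideal $\overline{\mathrm{HL}}_\bullet$: the reduced coproduct $\Delta_\prec^-$ sends a class of degree $n$ into $\bigoplus_{p+q=n,\;p,q\geq 1}\mathrm{HL}_p\otimes\mathrm{HL}_q$, hence strictly lowers degree in each factor, so every homogeneous element of degree $n$ lies in $F_n\overline{\mathrm{HL}}_\bullet$ and $\overline{\mathrm{HL}}_\bullet=\bigcup_{r\geq 1}F_r\overline{\mathrm{HL}}_\bullet$. The main point requiring care is the passage from homotopies to equalities, namely making the Künneth identification of the target natural enough that the codendriform homotopies and the homotopy $\Delta_\succ\simeq\tau\circ\Delta_\prec$ descend coherently; once that is in place the remaining verifications are purely formal degree bookkeeping.
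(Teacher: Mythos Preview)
Your proposal is correct and is precisely the standard passage from a homotopy coalgebra structure on a chain complex to a strict coalgebra structure on its homology; the paper states the corollary without proof, treating it as the immediate consequence of Theorem \ref{Theorem : Zinbiel up to homotopy coalgebra structure on Leibniz homology} that your argument spells out. Your use of the K\"unneth isomorphism over a field, the fact that homotopic chain maps induce equal maps in homology, and the degree argument for counitality and conilpotence are exactly what is needed, and nothing more.
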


\section{$\mathbf{L}$-homology for groups and rack homology}
In this section we compute explicitely the $\mathbf{L}$-set associated to the cubical nerve of a group $G$. We proved that the associated Leibniz chain complex is exactly the chain complex which computes the \textit{rack homology} of $G$. We deduce from this result and the previous section the existence of a coZinbiel coalgebra structure on the rack homology of a group $G$ and the existence of a long exact sequence relating group homology and rack homology. More generally we prove that rack homology is provided with a coZinbiel coalgebra structure.

\subsection{Cubical and simplicial nerves of a category} Let $\mathbf{C}$ be a category. The \textit{cubical nerve} of $\mathbf{C}$ is the cubical set $\mathrm{N^{\square}\mathbf{C}} := \mathrm{Hom}_{\mathbf{Cat}}\big(-,\mathbf{C}\big) : \square^{\mathrm{op}} \to \mathbf{Set}$, and the \textit{simplicial nerve} of $\mathbf{C}$ is the simplicial set $\mathrm{N^{\Delta}\mathbf{C}} := \mathrm{Hom}_{\mathbf{Cat}}\big(-,\mathbf{C}\big) : \Delta^{\mathrm{op}} \to \mathbf{Set}$. Using these functorial constructions we define two functors from $\mathbf{Cat}$ to $\mathbf{Ch}^+$ : $\mathrm{C}_{\bullet} \circ \mathrm{N}^{\square}$ and $\mathrm{C}_{\bullet} \circ \mathrm{N}^{\Delta}$.
\begin{align*}
\xymatrix{
\mathbf{Cat} \ar[r]^{\mathrm{N}^{\square}} \ar[d]_{\mathrm{N}^{\Delta}} & \mathbf{cSet} \ar[d]^{\mathrm{C}_{\bullet}}
\\
\mathbf{sSet} \ar[r]_{\mathrm{C}_{\bullet}} & \mathbf{Ch}^+
}
\end{align*}
\begin{theorem} \label{equivalence between cubical and simplicial nerves}
Let $\mathbf{C}$ be a category. The chain complexes $\mathrm{C_{\bullet}(N^{\square}\mathbf{C})}$ and $\mathrm{C_{\bullet}(N^{\Delta}\mathbf{C})}$ are quasi isomorphic.
\end{theorem}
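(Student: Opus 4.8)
The plan is to read both sides as functors $\mathbf{Cat} \to \mathbf{Ch}^+$, namely $F := \mathrm{C}_{\bullet} \circ \mathrm{N}^{\square}$ and $G := \mathrm{C}_{\bullet} \circ \mathrm{N}^{\Delta}$, and to compare them by the acyclic models method (Theorems \ref{Theorem : acyclic models theorem 1} and \ref{Theorem : acyclic models theorem 2}). The two families of models I would use are the cube categories $\square_n = [1]^n$ and the ordinal categories $[n]$, both regarded as objects of $\mathbf{Cat}$. In degree $0$ one has $\square_0 = [0]$, so $F_0(\mathbf{C}) = G_0(\mathbf{C}) = \Bbbk[\mathrm{Ob}\,\mathbf{C}]$ naturally; this common degree-$0$ identification is the map I will extend in both directions.

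First I would record the representability statements. Exactly as in Lemma \ref{Proposition : CLn representable}, the unnormalized functor $\mathbf{C} \mapsto \Bbbk[\mathrm{Hom}_{\mathbf{Cat}}(\square_n,\mathbf{C})]$ is representable by $\square_n$, and passing to the quotient by degeneracies via Lemma \ref{the quotient of a representable is representable} the functor $F_n = \mathrm{C}_n \circ \mathrm{N}^{\square}$ is representable by $\square_n = [1]^n$. The same argument, with $\square_n$ replaced by $[n]$, shows that $G_n = \mathrm{C}_n \circ \mathrm{N}^{\Delta}$ is representable by $[n]$.

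Next I would check the \emph{cross-acyclicity} of the models. For any category $\mathbf{D}$ the chain complex $F(\mathbf{D})$ (resp. $G(\mathbf{D})$) computes the singular homology of the geometric realization $|\mathrm{N}^{\square}\mathbf{D}|$ (resp. $|\mathrm{N}^{\Delta}\mathbf{D}|$), and both realizations are models for the classifying space $B\mathbf{D}$. Each model $[1]^m$ is a poset with a greatest element $(1,\dots,1)$ and each $[n]$ has a terminal object $n$; hence $B([1]^m)$ and $B([n])$ are contractible. Therefore $F$ and $G$ are both acyclic (reduced homology concentrated in degree $0$) on every model $[1]^m$ and on every model $[n]$. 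This is the crucial point: the simplicial functor $G$ is acyclic on the cubical models, and the cubical functor $F$ is acyclic on the simplicial models.

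With representability and cross-acyclicity in hand, the comparison theorem \ref{Theorem : acyclic models theorem 1} produces natural chain maps $\Phi : F \to G$ and $\Psi : G \to F$ extending the degree-$0$ identity of $\Bbbk[\mathrm{Ob}\,\mathbf{C}]$. The two composites $\Psi\Phi$ and $\mathrm{id}_F$ are natural self-maps of $F$ agreeing in degree $0$; since $F$ is representable and acyclic on the cubical models, the uniqueness theorem \ref{Theorem : acyclic models theorem 2} gives $\Psi\Phi \simeq \mathrm{id}_F$, and symmetrically $\Phi\Psi \simeq \mathrm{id}_G$. Hence $\Phi$ is a natural chain homotopy equivalence, in particular a quasi-isomorphism $\mathrm{C}_{\bullet}(\mathrm{N}^{\square}\mathbf{C}) \xrightarrow{\sim} \mathrm{C}_{\bullet}(\mathrm{N}^{\Delta}\mathbf{C})$ for every $\mathbf{C}$. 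The main obstacle I anticipate is the bookkeeping behind the cross-acyclicity together with the verification that the \emph{normalized} (rather than unnormalized) chain functors remain representable over $\mathbf{Cat}$; both are handled, respectively, by the contractibility of $B([1]^m)$ and $B([n])$ and by the retraction argument of Lemmas \ref{Proposition : CLn representable} and \ref{the quotient of a representable is representable}.
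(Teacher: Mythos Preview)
Your proof is correct and follows essentially the same acyclic-models strategy as the paper: build natural chain maps in both directions extending the identity in degree $0$, then use Theorem \ref{Theorem : acyclic models theorem 2} to show the composites are homotopic to the identity. The paper cites Lemmas \ref{Proposition : Cn representable}, \ref{lemma : Cndelta is representable}, \ref{lemma : deltan is contractible}, \ref{lemma : squaren is contractible} for representability and acyclicity, whereas you argue the acyclicity on the models uniformly via the observation that $[1]^m$ and $[n]$ have terminal objects and hence contractible classifying spaces; this is a cleaner way to handle all four acyclicity checks at once (in particular the ``cross'' cases $\mathrm{C}_\bullet(\mathrm{N}^\Delta\,[1]^m)$ and $\mathrm{C}_\bullet(\mathrm{N}^\square\,[n])$, which the paper leaves somewhat implicit).
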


\begin{proof}

Define $\mathrm{S}$ in dimension $0$ by  $\mathrm{S}_0 := \mathrm{id} : \mathrm{C}_0(\mathrm{N}^{\square}\mathbf{C}) \to \mathrm{C}_0(\mathrm{N}^{\Delta}\mathbf{C})$. By induction we extend this map to a morphism of chain complexes which is unique up to homotopy using acyclic models theorems. Indeed, by Lemma \ref{Proposition : Cn representable} the functor $\mathrm{C}_{n}$ is representable by $\square^{n}$ for all $n \in \mathbb{N}$, and by Lemma \ref{lemma : deltan is contractible} the homology groups $\mathrm{H}_{n-1}\big(\mathrm{C}_{\bullet}(\Delta^{n})\big)$ vanish for all $n \geq 2$. The acyclic models theorems \ref{Theorem : acyclic models theorem 1} and \ref{Theorem : acyclic models theorem 2} imply the existence of a homotopy unique morphism of chain complexes $\mathrm{S} : \mathrm{C_{\bullet}(N^{\square}\mathbf{C})} \to \mathrm{C_{\bullet}(N^{\Delta}\mathbf{C})}$ extending $\mathrm{S}_0$.

\vskip 0.2cm

In the same way Lemma \ref{lemma : Cndelta is representable} and Lemma \ref{lemma : squaren is contractible} imply that there exists a homotopy unique morphism of chain complexes $\mathrm{S}^{-1} : \mathrm{C_{\bullet}(N^{\Delta}\mathbf{C})} \to \mathrm{C_{\bullet}(N^{\square}\mathbf{C})}$ extending $\mathrm{S_0}^{-1} := \mathrm{id} : \mathrm{C}_0(\mathrm{N}^{\Delta}\mathbf{C}) \to \mathrm{C}_0(\mathrm{N}^{\square}\mathbf{C})$.

\vskip0.2cm

The chain map $\mathrm{S} \circ \mathrm{S}^{-1}$ coincides with $\mathrm{id} : \mathrm{C_{\bullet}(N^{\Delta}\mathbf{C})} \to \mathrm{C_{\bullet}(N^{\Delta}\mathbf{C})}$ in dimension $0$. The acyclic models Theorem \ref{Theorem : acyclic models theorem 2} implies that these two chain maps are homotopic. In the same way $\mathrm{S}^{-1} \circ \mathrm{S}$ is homotopic to $\mathrm{id} : \mathrm{C_{\bullet}(N^{\square}\mathbf{C})} \to \mathrm{C_{\bullet}(N^{\square}\mathbf{C})}$.  
\end{proof}

An explicit formula for $\mathrm{S} = \{\mathrm{S}_n\}_{n \in \mathbb{N}}$ is given by 
\begin{align*}
\mathrm{S}_n := \sum_{\sigma \in \mathbb{S}_n} \epsilon(\sigma) \, \sigma^* 
%\label{morphism from cubical homology to simplicial homology}
\end{align*}
where $\sigma$ is the functor from $\Delta_n$ to $\square_n$ defined by $\sigma(i) := \{\sigma(1),\dots,\sigma(i)\}$. 
\subsection{Cubical and simplicial nerves of a group} Let $G$ be a group. We still denote by $G$ the groupoid with set of objects $\{e\}$ and set of morphisms $G$. The composition is defined using the group multiplication:
\begin{align*}
e \stackrel{g}{\longrightarrow} e \stackrel{h}{\longrightarrow} e := e \stackrel{gh}{\longrightarrow} e
\end{align*}
The identity is given by the neutral element in $G$ : 
\begin{align*}
\mathrm{id}_e := e \stackrel{e}{\longrightarrow} e
\end{align*}
The \textit{cubical and simplicial nerves of $G$} are respectively the cubical and simplicial sets $\mathrm{N^{\square}}G$ and $\mathrm{N^{\Delta}}G$ of the groupoid $G$. Let us denote by $\mathrm{C^{\square}_{\bullet}}(G,\Bbbk)$ and $\mathrm{C^{\Delta}_{\bullet}}(G,\Bbbk)$ the chain complexes respectively associated to the cubical and simplicial nerves of $G$. By Theorem \ref{equivalence between cubical and simplicial nerves} these two chain complexes are quasi isomorphic, and so there is an isomorphism of their associated homologies 
\begin{align*}
\mathrm{H_{\bullet}^{\square}}(G,\Bbbk) \simeq \mathrm{H_{\bullet}^{\Delta}}(G,\Bbbk).
\end{align*}
A direct computation proves that the simplicial nerve $\mathrm{N}^{\Delta}G$ of the groupoid $G$ is isomorphic to the nerve $\mathrm{N}G$ of the group $G$ defined by
\begin{align*}
\mathrm{N}G &:= G^n,\\
\mathrm{N}G (\delta_i)(g_1,\dots,g_n) &:= 
\left\{\begin{array}{ll}
(g_2,\dots,g_n) & \text{ if } i = 0, \\
(g_1,\dots,g_ig_{i+1},\dots,g_n) & \text{ if } 1 \leq i \leq n,\\
(g_1,\dots,g_{n-1}) & \text{ if } i = n, \\
\end{array}\right. \\
\mathrm{N}G(\sigma_{i})(g_1, \dots,g_n) &:= (g_1, \dots, g_{i-1},e,g_i,\dots,g_n).
\end{align*} 
Therefore there are isomorphisms between cubical homology, simplicial homology and Eilenberg-MacLane homology.
\begin{align*}
\mathrm{H_{\bullet}^{\square}}(G,\Bbbk) \simeq \mathrm{H_{\bullet}^{\Delta}}(G,\Bbbk) \simeq \mathrm{H}_{\bullet}(G,\Bbbk).
\end{align*}
\subsection{Computation of the $\mathbf{L}$-nerve $\mathrm{L\big(N^{\square}G\big)}$ of a group $G$} Let $G$ be a group. Let $\mathrm{N^R}G$ be the cubical set defined by 

\begin{align*}
\mathrm{N^R}G_{n} &:= G^n ,\\
\mathrm{N^R}G(\delta_{i,\epsilon})(g_1, \dots,g_n) &:= 
\left\{\begin{array}{ll}
(g_1,\dots,g_{i-1},g_{i+1},\dots,g_n) & \text{ if } \epsilon = 0, \\
(g_i^{-1}g_1g_i,\dots,g_i^{-1}g_{i-1}g_i,g_{i+1},\dots,g_n) & \text{ if } \epsilon = 1.
\end{array}\right. \\
\mathrm{N^R}G(\sigma_{i})(g_1, \dots,g_n) &:= (g_1, \dots, g_{i-1},e,g_i,\dots,g_n).
\end{align*}
\begin{proposition} \label{Theorem : bijection from L(N(G,M)) to MGn} The cubical sets $\mathrm{L\big(N^{\square}G\big)}$ and $\mathrm{N^R}G$ are isomorphic. 
\end{proposition}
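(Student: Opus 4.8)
The plan is to make the functor-category description of $\mathrm{N^{\square}}G$ completely explicit, read off the subobject cut out by the defining conditions of the functor $\mathrm{L}$, and match it term by term with $\mathrm{N^R}G$. I would begin by parametrising the $n$-cubes. Regarding $\square_n$ as the poset category $\{0,1\}^n$, a functor $F\colon \square_n \to G$ into the one-object groupoid $G$ is determined by the group elements $f_t := F(\mathbf{0}\to t)$ attached to the vertices $t\in\{0,1\}^n$, where $\mathbf{0}=(0,\dots,0)$ and $f_{\mathbf{0}}=e$; conversely any vertex-labelling with $f_{\mathbf{0}}=e$ defines a functor via $F(s\to t)=f_s^{-1}f_t$ (using the groupoid composition convention $e\xrightarrow{g}e\xrightarrow{h}e=e\xrightarrow{gh}e$). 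This identifies $\mathrm{N^{\square}}G_n$ with the vertex-labellings, and it lets me compute every structure map, since $d_{i,\epsilon}F=F\circ\delta_{i,\epsilon}$ is simply the restriction of $F$ to the facet $\{t_i=\epsilon\}$, renormalised at the bottom vertex of that facet.

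Next I would extract the conditions defining $\mathrm{L}$. An iterated first-face operator $d_{1,\epsilon_1}\cdots d_{1,\epsilon_k}$ restricts $F$ to the subcube on which the first $k$ coordinates are fixed, renormalised at its bottom vertex; concretely it sends $F$ to the labelling $s\mapsto f_{(c,\mathbf{0})}^{-1}f_{(c,s)}$, where $c\in\{0,1\}^k$ records the fixed coordinates and $s\in\{0,1\}^{n-k}$. The equalizer condition $\mathrm{eq}(d_{1,\epsilon_1}\cdots d_{1,\epsilon_k};d_{1,\epsilon'_1}\cdots d_{1,\epsilon'_k})$ says this labelling is independent of $c$; taking $c=\mathbf{0}$ this is exactly the prefix-multiplicativity relation $f_{(c,s)}=f_{(c,\mathbf{0})}\,f_{(\mathbf{0},s)}$, required for every prefix length $k$. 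Peeling off one coordinate at a time by induction on $|\mathrm{supp}(t)|$, these relations force $f_t=\prod_{i\in\mathrm{supp}(t)}f_{e_i}$ with the factors in increasing index order, where $e_i$ is the $i$-th standard basis vector; conversely the product labelling satisfies every prefix relation. Hence an $\mathbf{L}$-cube is freely and uniquely determined by its $n$ axis-values $g_i:=f_{e_i}$, and I obtain a bijection $\phi_n\colon \mathrm{N^R}G_n=G^n\to \mathrm{L\big(N^{\square}G\big)}_n$ sending $(g_1,\dots,g_n)$ to the functor with $f_t=\prod_{i\in\mathrm{supp}(t)}g_i$.

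It then remains to check that $\phi=\{\phi_n\}$ intertwines all cubical structure maps. Using $d_{i,\epsilon}F=F\circ\delta_{i,\epsilon}$ together with the product formula, a direct computation shows that the axis-values of $d_{i,0}F$ are $(g_1,\dots,g_{i-1},g_{i+1},\dots,g_n)$, while those of $d_{i,1}F$ are $(g_i^{-1}g_1g_i,\dots,g_i^{-1}g_{i-1}g_i,g_{i+1},\dots,g_n)$: the bottom vertex of the facet $\{t_i=1\}$ is $e_i$, so renormalising by $f_{e_i}^{-1}=g_i^{-1}$ conjugates precisely the labels of index $<i$ by $g_i$ and cancels against the index-$i$ factor for the labels of index $>i$. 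These are exactly the face maps of $\mathrm{N^R}G$, and the degeneracy $s_i$, corresponding to the codegeneracy that projects out the $i$-th coordinate, inserts a trivial edge and hence the identity $e$ in position $i$, matching $\mathrm{N^R}G(\sigma_i)$. As $\phi_n$ is a bijection compatible with every face and degeneracy, it is an isomorphism of cubical sets.

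The genuinely delicate point is the second step: translating the abstract iterated equalizer conditions into the prefix-multiplicativity relations, and keeping the conventions straight throughout. The renormalisation at the bottom vertex $e_i$ of the facet $\{t_i=1\}$ is exactly what produces conjugation by $g_i$ on the correct side, and this hinges on fixing the groupoid composition order; the opposite convention would yield conjugation by $g_i^{-1}$ and would fail to reproduce the face maps of $\mathrm{N^R}G$. Once the vertex-parametrisation and the $\epsilon=1$ face computation are pinned down, everything else is routine bookkeeping.
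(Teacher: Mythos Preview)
Your proof is correct and follows essentially the same approach as the paper: both identify an $n$-cube in $\mathrm{L}(\mathrm{N}^{\square}G)$ with its $n$ axis-values $g_i=F(\emptyset\to\{i\})$ (equivalently $f_{e_i}$ in your notation), show that the $\mathbf{L}$-equalizer conditions force the functor to be determined by these values, and then verify the face and degeneracy formulas. The only difference is packaging---the paper works directly with the edge values $F(A\to A\sqcup\{i\})$ in the subset-poset description of $\square_n$ and argues that these equal $F(\emptyset\to\{i\})$ when $i>\max(A)$, whereas you encode the same information via the vertex labelling $f_t=F(\mathbf{0}\to t)$ and the prefix-multiplicativity relation $f_{(c,s)}=f_{(c,\mathbf{0})}f_{(\mathbf{0},s)}$; these are two phrasings of the same reduction.
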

\begin{proof}
First, let us compute $\mathrm{L(N_{\square}}G)_n$ for all $n \in \mathbb{N}$. Let $n \in \mathbb{N}$ and $\mathrm{F} \in \mathrm{L(N_{\square}}G)_n$ be fixed. Let us prove that $\mathrm{F}$ is completely and uniquely determined by its image on the morphisms $\emptyset \to \{i\}$ for all $1 \leq i \leq n$. In other words let us prove that the map
\begin{align} \label{bijection from L(N(G,M)) to MGn}
\mathrm{F} \mapsto \big(\mathrm{F}(\emptyset \to \{1\}),\mathrm{F}(\emptyset \to \{2\}),\dots,\mathrm{F}(\emptyset \to \{n\})\big).
\end{align}
is a bijection from $\mathrm{L\big(N_{\square}G\big)}_n$ to $G^n$.
\vskip 0.2cm
By definition $\mathrm{F}$ is a functor from $\square_n$ to $G$, so
\begin{align*}
\mathrm{F}(A \to B) &= \mathrm{F}(A \to A \sqcup \{i_1\}) \cdots \mathrm{F}(A \sqcup \{i_1,\dots, i_{k-1}\} \to B)
\end{align*}
for all morphisms $A \to B$ in $\square_n$ with $B = A \sqcup \{i_1, \dots, i_k\}$ where $i_1 < \cdots < i_k$. Then $\mathrm{F}$ is determined by its images on morphisms of the form $A \to A \sqcup \{i\}$ with $i > \max(A)$. Moreover
\begin{align*}
A \to A \sqcup \{i\} = \delta_A(\delta_{2,0})^{n-i+2}(\emptyset \to \{1\}) \text{ and } \emptyset \to \{i\} = \delta_{\emptyset}(\delta_{2,0})^{n-i+2}(\emptyset \to \{1\}),
\end{align*} 
then
\begin{align*}
\mathrm{F}(A \to A \sqcup \{i\}) = (d_{2,0})^{n-i+2}d_A \mathrm{F}(\emptyset \to \{1\}) = (d_{2,0})^{n-i+2}d_\emptyset \mathrm{F}(\emptyset \to \{1\}) = \mathrm{F}(\emptyset \to \{i\}).
\end{align*}
Thus the map \eqref{bijection from L(N(G,M)) to MGn} is a bijection between $\mathrm{L(N_{\square}}G)_n$ and $G^n = \mathrm{N^R}G_n$.
\vskip 0.2cm
Now let us compute the maps $d_{i,\epsilon} : \mathrm{L(N_{\square}}G)_n \to \mathrm{L(N_{\square}}G)_{n-1}$ for all $1 \leq i \leq n$ and $\epsilon \in \{0,1\}$ under the bijection \eqref{bijection from L(N(G,M)) to MGn}. Let $\mathrm{F} \in \mathrm{L(N_{\square}}G)_n$ and $(g_1,\dots,g_n)$ be its corresponding element in $G^n$ under the bijection \eqref{bijection from L(N(G,M)) to MGn}. We have 
\begin{align*}
(d_{i,\epsilon}\mathrm{F})(\emptyset \to \{j\}) &=
\left\{
\begin{array}{ll}
\mathrm{F}(\emptyset \to \{j\}) = g_j& \text{ if } i > j , \epsilon = 0,\\
\mathrm{F}(\emptyset \to \{j+1\}) =g_{j+1} & \text{ if } i \leq j , \epsilon = 0,\\
\mathrm{F}(\{i\} \to \{j,i\}) = \mathrm{F}(\{i\} \to \emptyset \to \{j\} \to \{j,i\}) = g_{i}^{-1}g_jg_i & \text{ if } i > j , \epsilon = 1,\\
\mathrm{F}(\{i\} \to \{i,j+1\}) = \mathrm{F}(\emptyset \to \{j+1\}) = g_{j+1} & \text{ if } i \leq j , \epsilon = 1,
\end{array}
\right. 
\end{align*}

\vskip0.2cm

Finally let us compute the maps $s_i : \mathrm{L(N_{\square}}G)_n \to \mathrm{L(N_{\square}}G)_{n+1}$ for all $1 \leq i \leq n+1$ under the bijection \eqref{bijection from L(N(G,M)) to MGn}. Let $\mathrm{F} \in \mathrm{L(N_{\square}}G)_n$ and $(g_1,\dots,g_n)$ be its corresponding element in $G^n$ under the bijection \eqref{bijection from L(N(G,M)) to MGn}. We have
\begin{align*}
(s_i\mathrm{F})(\emptyset \to \{i\}) &= 
\left\{
\begin{array}{ll}
\mathrm{F}(\emptyset \to \{j\}) = g_j& \text{ if } i > j , \\
\mathrm{F}(\mathrm{id}_{\emptyset}) = e & \text{ if } i = j , \\
\mathrm{F}(\emptyset \to \{j-1\}) = g_{j-1} & \text{ if } i < j.\\
\end{array}
\right. 
\end{align*}

\vskip0.2cm

Thus \eqref{bijection from L(N(G,M)) to MGn} induces an isomorphism between the cubical sets $\mathrm{L\big(N^{\square}G\big)}$ and $\mathrm{N^R}G$ 
\end{proof}

A consequence of \ref{Theorem : Zinbiel up to homotopy coalgebra structure on Leibniz homology} is that the homology of the cubical set $\mathrm{N^R}(G)$ is provided with a coZinbiel coalgebra structure. This result is consistent with the conjecture of J.-L. Loday about the existence of a Leibniz homology defined for groups provided with a coZinbiel coalgebra structure.
\begin{align}
\mathrm{HL}_{\bullet}(-,\Bbbk) := \mathrm{H}_{\bullet} \circ \mathrm{C}_{\bullet}(-,\Bbbk) \circ \Bbbk.\mathrm{N^R} : \mathbf{Grp} \to \mathbf{Zinb}^c \label{HL for groups}
\end{align}
In the following part we prove that this $\mathbf{L}$-homology defined on the category of groups can be extended to a larger category called the \textit{category of racks}. On this category of racks the $\mathbf{L}$-homology functor becomes the usual rack homology functor.
% defined by R. Fenn, C. Rourke and B. Sanderson in \cite{FennRourkeSanderson}.
 
\subsection{Rack homology}%(\cite{FennRourkeSanderson}) 
Let $G$ be a group. Theorem \ref{Theorem : bijection from L(N(G,M)) to MGn} emphasizes that the maps $d_{i,\epsilon}$ and $s_i$ are defined using only the \textit{conjugation} in $G$ and not the product in $G$. Moreover we can check that the cubical relations are consequences of the following identities :
\begin{align}
e \lhd g &= e, \label{neutral element 1}\\
g \lhd e &= g, \label{neutral element 2}\\ 
(k \lhd h) \lhd g &= (k \lhd g) \lhd (h \lhd g), \label{rack identity}
\end{align}
where $h \lhd g = g^{-1}hg$. As a consequence this cubical set can be defined in general for any set $X$ provided with an operation $\lhd : X \times X \to X$ satisfying these identities. This remark leads to the notion of (pointed) rack.
\begin{definition}%[\cite{Joyce_A_classifying_invariant_of_knots}]
A \textnormal{rack} is a set $X$ provided with a product $\lhd : X \times X \to X$ satisfying the \textit{rack identity} \eqref{rack identity} for all $g,h,k \in X$, and such that the map $- \lhd g : X \to X$ is a bijection for all $g \in X$. A \textnormal{pointed rack} is a rack $(X,\lhd)$ provided with an element $e \in X$, called \textnormal{neutral element}, satisfying equations \eqref{neutral element 1} and \eqref{neutral element 2} for all $g \in X$.
% A \textnormal{(pointed) rack} is a (pointed) shelf where the maps $ - \lhd g : X \to X$ are bijections for all $g \in X$.
\end{definition} 
In the sequel all racks
% or shelves 
will be considered pointed. Given a rack $X$
%/shelf
there is a cubical set $\mathrm{N^R}X$, called the \textit{nerve of $X$}, and defined by the following formulas.
\begin{align*}
\mathrm{N^R}X_{n} &:= X^n ,\\
\mathrm{N^R}X(\delta_{i,\epsilon})(x_1, \dots,x_n) &:= 
\left\{\begin{array}{ll}
(x_1,\dots,x_{i-1},x_{i+1},\dots,x_n) & \text{ if } \epsilon = 0, \\
(x_1 \lhd x_i,\dots,x_{i-1} \lhd x_i,x_{i+1},\dots,x_n) & \text{ if } \epsilon = 1.
\end{array}\right. \\
\mathrm{N^R}X(\sigma_{i})(x_1, \dots,x_n) &:= (x_1, \dots, x_{i-1},e,x_i,\dots,x_n).
\end{align*} 
The chain complex associated to this cubical set is called the \textit{rack chain complex} of $X$ and is denoted by $\mathrm{CR_{\bullet}}(X,\Bbbk)$. Its homology is called the \textit{rack homology} of $X$ and is denoted by $\mathrm{HR}_{\bullet}(X,\Bbbk)$.
\vskip 0.2cm
The nerve of a rack $X$ is a $\mathbf{L}$-set, therefore the rack chain complex of $X$ is provided with a homotopy coZinbiel coalgebra structure. 
\begin{theorem} \label{Theorem : Zinbiel up to homotopy coalgebra structure on rack homology} Let $X$ be a rack. The rack homology of $X$ is provided with a coZinbiel coalgebra structure.
\begin{align*}
\mathrm{HR}_{\bullet}(-,\Bbbk) : \mathbf{Rack} \to \mathbf{Zinb}^c
\end{align*} 
\end{theorem}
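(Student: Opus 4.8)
The plan is to reduce the statement to Theorem \ref{Theorem : Zinbiel up to homotopy coalgebra structure on Leibniz homology} and its corollary, by observing that the rack nerve $\mathrm{N^R}X$ is itself an $\mathbf{L}$-set. Once this is known, the $\mathbf{L}$-homology of $\mathrm{N^R}X$ coincides with its ordinary cubical homology, which is by definition the rack homology $\mathrm{HR}_{\bullet}(X,\Bbbk)$, and the coZinbiel structure already constructed on $\mathbf{L}$-homology transports directly.

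First I would check that $\mathrm{N^R}X$ satisfies the two defining conditions of an $\mathbf{L}$-set. The condition $\mathrm{N^R}X_0 = \{\star\}$ is immediate, since $X^0$ is a one-point set. For the condition $d_{1,0} = d_{1,1}$ I would simply read off the defining formulas of $\mathrm{N^R}X(\delta_{i,\epsilon})$ at $i = 1$: both $\delta_{1,0}$ and $\delta_{1,1}$ send $(x_1,\dots,x_n)$ to $(x_2,\dots,x_n)$, because the conjugated entries $x_1 \lhd x_1,\dots,x_{i-1} \lhd x_i$ that distinguish the case $\epsilon = 1$ from $\epsilon = 0$ are vacuous when $i = 1$. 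Hence $d_{1,0} = d_{1,1}$ on every $\mathrm{N^R}X_n$, so $\mathrm{N^R}X$ is an $\mathbf{L}$-set.

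Next I would invoke the characterization that a cubical set is an $\mathbf{L}$-set if and only if it is fixed by $\mathrm{L}$, giving $\mathrm{L}(\mathrm{N^R}X) = \mathrm{N^R}X$. Consequently $\mathrm{QL}_{\bullet}(\mathrm{N^R}X) = \mathrm{Q}_{\bullet}(\mathrm{N^R}X)$ and, since $\mathrm{D}_{\bullet}(\mathrm{N^R}X) \subseteq \mathrm{Q}_{\bullet}(\mathrm{N^R}X)$, one gets $\mathrm{CL}_{\bullet}(\mathrm{N^R}X) = \mathrm{C}_{\bullet}(\mathrm{N^R}X) = \mathrm{CR}_{\bullet}(X,\Bbbk)$, and therefore $\mathrm{HL}_{\bullet}(\mathrm{N^R}X,\Bbbk) = \mathrm{HR}_{\bullet}(X,\Bbbk)$. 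Applying Theorem \ref{Theorem : Zinbiel up to homotopy coalgebra structure on Leibniz homology} to the cubical set $\mathrm{N^R}X$ equips $\mathrm{CR}_{\bullet}(X,\Bbbk)$ with the coproducts $\Delta_{\succ},\Delta_{\prec}$ of a homotopy coZinbiel coalgebra, and its corollary passes this structure to homology, endowing $\mathrm{HR}_{\bullet}(X,\Bbbk)$ with a connected coZinbiel coalgebra structure.

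Finally, for the functoriality claim I would note that a morphism of racks $f \colon X \to Y$ induces a morphism of cubical sets $\mathrm{N^R}f \colon \mathrm{N^R}X \to \mathrm{N^R}Y$, hence a chain map of rack complexes and a map $\mathrm{HR}_{\bullet}(f)$ on rack homology; since $\Delta_{\prec}$ was produced by the acyclic models method as a \emph{natural} transformation on $\mathbf{cSet}$, the map $\mathrm{HR}_{\bullet}(f)$ is automatically a morphism of coZinbiel coalgebras, so $\mathrm{HR}_{\bullet}(-,\Bbbk)$ is a functor $\mathbf{Rack} \to \mathbf{Zinb}^c$. I do not expect any genuine analytic obstacle here, because the hard work, namely constructing $\Delta_{\prec}$ and verifying the coZinbiel relations up to homotopy, was already done in Theorem \ref{Theorem : Zinbiel up to homotopy coalgebra structure on Leibniz homology}; the only real content is the verification that $\mathrm{N^R}X$ lies in $\mathbf{LSet}$. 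The one point deserving care is confirming that for an $\mathbf{L}$-set the $\mathbf{L}$-homology and the ordinary homology agree on the nose, as it is precisely this identification that legitimizes the transport of the coZinbiel structure to rack homology.
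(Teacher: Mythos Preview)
Your proposal is correct and follows exactly the paper's approach: the paper simply asserts that ``the nerve of a rack $X$ is a $\mathbf{L}$-set, therefore the rack chain complex of $X$ is provided with a homotopy coZinbiel coalgebra structure,'' and then states the theorem, which is precisely your reduction to Theorem~\ref{Theorem : Zinbiel up to homotopy coalgebra structure on Leibniz homology} and its corollary. If anything, you give more detail than the paper, spelling out the verification that $d_{1,0}=d_{1,1}$ on $\mathrm{N^R}X$ and the functoriality argument.
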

Given $n \in \mathbb{N}$ an explicit formula for $(\Delta_{\prec})_n : \mathrm{HR}_n(X,\Bbbk) \to \oplus_{p+q = n} \mathrm{HR}_p(X,\Bbbk) \otimes \mathrm{HR}_q(X,\Bbbk)$ is :
\begin{align*}
(\Delta_{\prec})_n[x_1,\dots,x_n] = \sum_{p+q=n}\sum_{\sigma \in \mathrm{Sh}_{p,q}^1} \epsilon(\sigma) \, [x_1,x_{\sigma(2)},\dots,x_{\sigma(p)}] \otimes [x_{p+1}^{\sigma},\dots,x_{p+q}^{\sigma}]
\end{align*}
where $x_i^{\sigma} := x_{\sigma(i)} \lhd x_{i_1} \lhd \cdots \lhd x_{i_k}$ with $i_{j} \in \{\sigma(1),\dots,\sigma(p)\}$ and $i_k > \cdots > i_1 > \sigma(i)$.
\vskip 0.2cm
The functor $\mathrm{Conj}:\mathbf{Grp} \to \mathbf{Rack}$ from the category of groups to the category of racks makes the following diagram commutative
$$
\xymatrix{
\mathbf{Rack} \ar[rr]^{ \mathrm{HR}_{\bullet}(-,\Bbbk)} & & \mathbf{Zinb}^c 
\\
\mathbf{Grp} \ar[u]^{\mathrm{Conj}} \ar[rru]_{\quad \mathrm{HL}_{\bullet}(-,\Bbbk)} & &
}
$$
where $\mathrm{HL}_{\bullet}(-,\Bbbk)$ is defined by \eqref{HL for groups}. This result is consistent with the conjecture of J.-L. Loday about the existence of mathematical objects (coquecigrues) whose groups naturally carry the structure and whose natural homology theory is provided with a coZinbiel coalgebra structure.
\vskip 0.2cm
The long exact sequence \eqref{long exact sequence linking HL and H} applied to the nerve of a group induces the following theorem relating rack homology and group homology.

\begin{theorem}\label{Theorem : long exact sequence linking HR and H}
Let $G$ be a group. There is a long exact sequence 
\begin{align*}
\cdots \to \mathrm{H}^{\mathrm{rel}}_{n+1}(G,\Bbbk) \to \mathrm{HR}_{n}(G,\Bbbk) \stackrel{\mathrm{S}_n}{\to} \mathrm{H}_{n}(G,\Bbbk) \to \mathrm{H}^{\mathrm{rel}}_{n}(G,\Bbbk) \to \mathrm{HR}_{n-1}(G,\Bbbk)  \stackrel{\mathrm{S}_{n-1}}{\to} \mathrm{H}_{n-1}(G,\Bbbk) \to \cdots
\end{align*}
\end{theorem}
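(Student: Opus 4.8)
The plan is to specialize the long exact sequence \eqref{long exact sequence linking HL and H}, valid for an arbitrary cubical set $\mathrm{X}$, to the cubical nerve $\mathrm{X} := \mathrm{N}^{\square}G$, and then to identify its three families of terms with the groups in the statement. Recall that \eqref{long exact sequence linking HL and H} is the connecting sequence of the short exact sequence of chain complexes $\mathrm{CL}_{\bullet}(\mathrm{X}) \hookrightarrow \mathrm{C}_{\bullet}(\mathrm{X}) \twoheadrightarrow \mathrm{C}^{\mathrm{rel}}_{\bullet}(\mathrm{X})$, and that it is natural in $\mathrm{X}$. Thus the entire task reduces to matching the $\mathbf{L}$-homology, the cubical homology, and the relative homology of $\mathrm{N}^{\square}G$ with $\mathrm{HR}_{\bullet}(G,\Bbbk)$, $\mathrm{H}_{\bullet}(G,\Bbbk)$ and $\mathrm{H}^{\mathrm{rel}}_{\bullet}(G,\Bbbk)$ respectively.

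First I would identify the $\mathbf{L}$-homology term. By construction $\mathrm{CL}_{\bullet}(\mathrm{N}^{\square}G)$ is the normalized chain complex of the $\mathbf{L}$-set $\mathrm{L}(\mathrm{N}^{\square}G)$. Proposition \ref{Theorem : bijection from L(N(G,M)) to MGn} provides an isomorphism of cubical sets $\mathrm{L}(\mathrm{N}^{\square}G) \simeq \mathrm{N^R}G$, where $\mathrm{N^R}G$ is precisely the nerve of the conjugation rack on $G$. Hence $\mathrm{CL}_{\bullet}(\mathrm{N}^{\square}G)$ coincides with the rack chain complex $\mathrm{CR}_{\bullet}(G,\Bbbk)$, and therefore $\mathrm{HL}_{n}(\mathrm{N}^{\square}G) = \mathrm{HR}_{n}(G,\Bbbk)$ for all $n$.

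Next I would identify the cubical homology term. The group $\mathrm{H}_{n}(\mathrm{N}^{\square}G) = \mathrm{H}^{\square}_{n}(G,\Bbbk)$ is identified with the Eilenberg-MacLane homology $\mathrm{H}_{n}(G,\Bbbk)$ through the comparison already established: Theorem \ref{equivalence between cubical and simplicial nerves} furnishes the quasi-isomorphism $\mathrm{S} : \mathrm{C}_{\bullet}(\mathrm{N}^{\square}G) \to \mathrm{C}_{\bullet}(\mathrm{N}^{\Delta}G)$, while the isomorphism $\mathrm{N}^{\Delta}G \simeq \mathrm{N}G$ identifies the target with the standard Eilenberg-MacLane complex, giving $\mathrm{H}^{\square}_{n}(G,\Bbbk) \simeq \mathrm{H}_{n}(G,\Bbbk)$. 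I would then define the map $\mathrm{S}_{n}$ of the statement to be the composite of the arrow $\mathrm{HL}_{n}(\mathrm{N}^{\square}G) \to \mathrm{H}_{n}(\mathrm{N}^{\square}G)$ induced by the inclusion $\mathrm{inc}$ with this last isomorphism. The relative terms $\mathrm{H}^{\mathrm{rel}}_{n}(\mathrm{N}^{\square}G)$ are, by definition, the groups $\mathrm{H}^{\mathrm{rel}}_{n}(G,\Bbbk)$, so substituting these three identifications into \eqref{long exact sequence linking HL and H} yields the displayed sequence.

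The one point requiring care, and the only place where an \emph{a priori} mismatch could occur, is confirming that the comparison arrow produced by the long exact sequence really is $\mathrm{S}_{n}$. At the chain level this arrow is the inclusion $\mathrm{CL}_{\bullet}(\mathrm{N}^{\square}G) \hookrightarrow \mathrm{C}_{\bullet}(\mathrm{N}^{\square}G)$; postcomposing with the quasi-isomorphism $\mathrm{S}$ to the simplicial complex is exactly what turns it into the map labelled $\mathrm{S}_{n}$. Since every identification above is either an isomorphism of chain complexes or a fixed homotopy class of quasi-isomorphisms, and since \eqref{long exact sequence linking HL and H} is natural, this compatibility is pure bookkeeping with no homological obstruction, and the exactness of the displayed sequence is inherited verbatim from \eqref{long exact sequence linking HL and H}.
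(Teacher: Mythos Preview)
Your proof is correct and follows exactly the approach indicated in the paper, which simply says ``The long exact sequence \eqref{long exact sequence linking HL and H} applied to the nerve of a group induces the following theorem.'' You have in fact been more thorough than the paper in spelling out the three identifications (via Proposition~\ref{Theorem : bijection from L(N(G,M)) to MGn} for the rack term, via Theorem~\ref{equivalence between cubical and simplicial nerves} for the Eilenberg--MacLane term, and by definition for the relative term) and in tracing the map $\mathrm{S}_n$ as the composite of $\mathrm{H}_{\bullet}(\mathrm{inc})$ with the quasi-isomorphism $\mathrm{S}$.
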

Given $n \in \mathbb{N}$ an explicit formula for $\mathrm{S}_n : \mathrm{HR}_n(G,\Bbbk) \to \mathrm{H}_{n}(G,\Bbbk)$ is:
\begin{align*}
\mathrm{S}_n(g_1,\dots,g_n) = \sum_{\sigma \in \mathbb{S}_n} \epsilon(\sigma) \, (g^{\sigma}_1,\dots,g^{\sigma}_n)
\end{align*}
where $g_i^{\sigma} := g_{\sigma(i)} \lhd g_{i_1} \lhd \cdots \lhd g_{i_k}$ with $i_j \in \{\sigma(1),\dots,\sigma(i-1)\}$ and $i_k > i_{k-1} > \cdots > i_1 > \sigma(i)$.
\vskip 0.2cm
This result is consistent with the conjecture of J.-L. Loday about the existence of a natural morphism of cocommutative coalgebras from Leibniz homology to the usual group homology.

\section{Rack homology of abelian groups} \label{section : rack homology of abelian groups}
In this section $G$ is an abelian group and $\mu : G \times G \to G$ is the commutative multiplication in $G$. Group homology of abelian groups with coefficients in a field of characteristic $0$ is well known. Using the Pontryagin product we can prove for all $n \in \mathbb{N}$ the following isomorphism (cf. \cite{Brown_Cohomology_of_groups} pp.121).
\begin{align}
\mathrm{H}_{n}(G,\Bbbk) \simeq \Lambda^n(G \otimes \Bbbk) \label{group homology of an abelian group}
\end{align}
There is a similar result for the rack homology of an abelian group. This is the content of the following theorem.
\begin{theorem} Let $G$ be an abelian group. For all $n \in \mathbb{N}$ there is an isomorphism
\begin{align}
\mathrm{HR}_{n}(G,\Bbbk) \simeq \mathrm{T}^n(\Bbbk[G \setminus \{0\}]). \label{rack homology of an abelian group}
\end{align}
If  $\Bbbk$ is a field of characteristic $0$, then under bijections \eqref{group homology of an abelian group} and \eqref{rack homology of an abelian group} the map $\mathrm{S}_n : \mathrm{HR}_n(G,\Bbbk) \to \mathrm{H}_n(G,\Bbbk)$ is the canonical projection $\mathrm{T}^n(\Bbbk[G \setminus \{0\}]) \twoheadrightarrow \Lambda^n(G \otimes \Bbbk)$ for all $n \in \mathbb{N}$.
\end{theorem}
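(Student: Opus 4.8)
The plan is to exploit the fact that an abelian group, viewed as a rack via conjugation, is the \emph{trivial rack}: since $h \lhd g = g^{-1}hg = h$ when $G$ is commutative, the operation $\lhd$ is the first projection. First I would substitute this into the description of the nerve $\mathrm{N^R}G$ given above. Both face maps then become the same coordinate-deletion map, $\mathrm{N^R}G(\delta_{i,0}) = \mathrm{N^R}G(\delta_{i,1})$, so that $d_{i,0} = d_{i,1}$ for every $i$ and the rack differential $d = \sum_{i=1}^{n} (-1)^{i+1}(d_{i,1} - d_{i,0})$ vanishes identically. Consequently the rack homology is computed by a chain complex with zero differential, whence $\mathrm{HR}_n(G,\Bbbk) \cong \mathrm{CR}_n(G,\Bbbk)$ for all $n$.

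It then remains to identify the normalized chain module. The degeneracies insert the neutral element $e = 0$ into a coordinate, so the degenerate submodule $\mathrm{DR}_n$ is spanned by the tuples $(x_1,\dots,x_n) \in G^n$ having at least one entry equal to $0$. The quotient $\mathrm{CR}_n(G,\Bbbk) = \Bbbk[G^n]/\mathrm{DR}_n$ therefore has as basis the tuples all of whose entries lie in $G \setminus \{0\}$, giving the identification $\mathrm{CR}_n(G,\Bbbk) \cong \Bbbk[(G\setminus\{0\})^n] \cong \mathrm{T}^n(\Bbbk[G\setminus\{0\}])$. Combined with the vanishing of the differential, this yields the isomorphism \eqref{rack homology of an abelian group}.

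For the statement about $\mathrm{S}_n$ I would specialize the explicit formula recorded after Theorem \ref{Theorem : long exact sequence linking HR and H}. On the trivial rack every twisting term $g_i^{\sigma} = g_{\sigma(i)} \lhd g_{i_1} \lhd \cdots \lhd g_{i_k}$ collapses to $g_{\sigma(i)}$, so the map reduces to the antisymmetrization $\mathrm{S}_n(g_1,\dots,g_n) = \sum_{\sigma \in \mathbb{S}_n} \epsilon(\sigma)\,(g_{\sigma(1)},\dots,g_{\sigma(n)})$ at the level of the Eilenberg-MacLane (bar) complex. The key observation is that this antisymmetrized cycle is exactly the shuffle product of the one-cycles $(g_1),\dots,(g_n)$, hence represents the iterated Pontryagin product $[g_1]\cdots[g_n]$ of the degree-one classes. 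Under the isomorphism \eqref{group homology of an abelian group}, which is realized through the Pontryagin product and sends $[g] \in \mathrm{H}_1(G,\Bbbk)$ to $g \otimes 1 \in G \otimes \Bbbk$, this iterated product corresponds to $(g_1 \otimes 1) \wedge \cdots \wedge (g_n \otimes 1)$. The latter is precisely the value at $(g_1,\dots,g_n)$ of the canonical map $\mathrm{T}^n(\Bbbk[G\setminus\{0\}]) \twoheadrightarrow \Lambda^n(G \otimes \Bbbk)$, which is the desired conclusion.

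The main obstacle is this last identification: matching the combinatorial antisymmetrization formula for $\mathrm{S}_n$ with the iterated Pontryagin product and then tracking it through the isomorphism \eqref{group homology of an abelian group}. The crux is to verify, via the Eilenberg-Zilber shuffle map defining the Pontryagin product, that the shuffle product of $n$ one-cycles is literally the signed sum $\sum_{\sigma} \epsilon(\sigma)\,(g_{\sigma(1)},\dots,g_{\sigma(n)})$ (note that this cycle represents a single product class, so no spurious factorial enters); once this is in hand, compatibility of the product with the wedge under the structure realizing \eqref{group homology of an abelian group} gives the result, the characteristic-zero hypothesis entering only to guarantee $\mathrm{H}_{\bullet}(G,\Bbbk) \cong \Lambda(G \otimes \Bbbk)$.
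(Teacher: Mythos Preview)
Your proposal is correct and follows essentially the same line as the paper: trivial conjugation $\Rightarrow$ zero differential $\Rightarrow$ $\mathrm{HR}_n \cong \mathrm{CR}_n$, and trivial conjugation collapses the explicit formula for $\mathrm{S}_n$ to the antisymmetrization, which is then identified with the canonical projection under \eqref{group homology of an abelian group}. If anything, your argument is more careful than the paper's in two places: you actually compute the \emph{normalized} chains $\mathrm{CR}_n(G,\Bbbk) \cong \Bbbk[(G\setminus\{0\})^n]$ (the paper's proof writes $\Bbbk[G^n]$, which is the unnormalized module), and you spell out why the antisymmetrized cycle is the iterated Pontryagin product of one-cycles, whereas the paper simply asserts that the antisymmetrization becomes the canonical projection under the two identifications.
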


\begin{proof} The group $G$ being abelian the conjugation is trivial. It implies that the differential of the chain complex $\mathrm{CR}_{\bullet}(G,\Bbbk)$ is equal to zero. Therefore the homology groups $\mathrm{HR}_{n}(G,\Bbbk)$ are isomorphic to $\mathrm{CR}_n(G,\Bbbk) = \Bbbk[G^n] \simeq \Bbbk[G]^{\otimes n}$ for all $n \in \mathbb{N}$.
\vskip 0.2cm
The conjugation in $G$ being trivial the map $\mathrm{S}_n:\mathrm{HR}_n(G,\Bbbk) \to \mathrm{H}_{n}(G,\Bbbk)$ is equal to $\mathrm{S}_n(g_1,\dots,g_n) = \sum_{\sigma \in \mathbb{S}_n} \epsilon(\sigma) \,  (g_{\sigma(1)},\dots,g_{\sigma(n)})$ for all $n \in \mathbb{N}$. Under bijections \eqref{group homology of an abelian group} and \eqref{rack homology of an abelian group} this map is the canonical projection.
\end{proof}
\subsection{A commutative Hopf algebra structure on the group homology of an abelian group} The group $G$ being abelian the multiplication $\mu$ is a group morphism. As a consequence the chain complex $\mathrm{C}_{\bullet}(G,\Bbbk)$ computing the group homology of $G$ is provided with a product $\star$, called \textit{Pontryagin product}, and defined by the formula
\begin{align*}
\star : \mathrm{C}_{\bullet}(G,\Bbbk) \otimes \mathrm{C}_{\bullet}(G,\Bbbk) \stackrel{\mathrm{EZ}}{\longrightarrow} \mathrm{C}_{\bullet}(G \times G,\Bbbk) \stackrel{\mathrm{C}_{\bullet}(\mu)}{\longrightarrow} \mathrm{C}_{\bullet}(G,\Bbbk). 
\end{align*}
where $\mathrm{EZ}$ is the \textit{Eilenberg-Zilber map}, an inverse to the Alexander-Whitney map. An explicit formula for $\star$ is given by
\begin{align*}
\mathrm{F} \star \mathrm{F}' := \sum_{\sigma \in \mathrm{Sh}_{p,q}} \epsilon(\sigma) \, \mu \circ (\mathrm{F} \times \mathrm{F}') \circ \sigma
\end{align*}
for all $\mathrm{F} \in \mathrm{C}_p(G,\Bbbk), \, \mathrm{F}' \in \mathrm{C}_q(G,\Bbbk)$. In this formula $\sigma$ is the functor from $\Delta_{p+q}$ to $\Delta_p \times \Delta_q$ defined by 
\begin{align*}
\sigma(j) &:= \left\{\begin{array}{ll} 
\big(\sigma^{-1}(j),j-\sigma^{-1}(j)\big) & \text{if } 1 \leq \sigma^{-1}(j) \leq p ,\\
(j-\sigma^{-1}(j)+p,\sigma^{-1}(j)-p) & \text{if } p+1 \leq \sigma^{-1}(j) \leq p+q. 
\end{array} \right.
\end{align*}
Under the bijection $\mathrm{C}_{n}(G,\Bbbk) \simeq \Bbbk.G^n$ the product $\star$ is equal to
\begin{align*}\displaystyle
(g_1,\dots,g_p) \star (g_{p+1}, \dots , g_{p+q}) = \sum_{\sigma \in \mathrm{Sh}_{p,q}} \epsilon(\sigma) \, (g_{\sigma^{-1}(1)},\dots,g_{\sigma^{-1}(p+q)}).
\end{align*}
This product is associative and commutative and thus provides $\mathrm{C}_{\bullet}(G,\Bbbk)$ with a commutative algebra structure. Previously we have seen that $\mathrm{C}_{\bullet}(G,\Bbbk)$ is provided with a cocommutative up to homotopy coalgebra structure (Theorem \ref{Theorem : commutative up to homotopy coalgebra structure on C(X)}). These two structures are compatible, \textit{i.e.} they satisy the \textit{Hopf relation}
\begin{align*}
\Delta \circ \star = \star_{\otimes} \circ (\Delta \otimes \Delta). 
\end{align*}
Therefore the group homology of an abelian group $G$ with coefficients in a field $\Bbbk$ is a commutative Hopf algebra. This Hopf algebra is connected so if $\Bbbk$ is a field of characteristic $0$, then the Hopf-Borel Theorem (\ref{Theorem : Hopf-Borel}) implies that the Hopf algebra $(\mathrm{H}_{\bullet}(G,\Bbbk),\Delta,\star)$ is free and cofree over its primitive part.

\subsection{A coZinbiel-associative bialgebra structure on the rack homology of an abelian group} The multiplication $\mu : G \times G \to G$ being a group morphism it is a rack morphism. As a consequence the chain complex $\mathrm{CR}_{\bullet}(G,\Bbbk)$ computing the rack homology of $G$ is provided with a product $\star$, still called the Pontryagin product, and defined by the same formula as before.
\begin{align*}
\star : \mathrm{CR}_{\bullet}(G,\Bbbk) \otimes \mathrm{CR}_{\bullet}(G,\Bbbk) \stackrel{\mathrm{EZ}}{\longrightarrow} \mathrm{CR}_{\bullet}(G \times G,\Bbbk) \stackrel{\mathrm{CR}_{\bullet}(\mu)}{\longrightarrow} \mathrm{CR}_{\bullet}(G,\Bbbk). 
\end{align*}
An explicit formula for $\star$ on $\mathrm{CR}_{\bullet}(G,\Bbbk)$ is given by :
\begin{align*}
\mathrm{F} \star \mathrm{F}' = \mu \circ (\mathrm{F} \times \mathrm{F}') \circ \mathrm{i}_{p,q}
\end{align*}
where $i_{p,q}$ is the functor from $\square_{p+q}$ to $\square_{p} \times \square_q$ defined by $\mathrm{i}_{p,q}(\epsilon_1,\dots,\epsilon_{p+q}) = \big((\epsilon_1,\dots,\epsilon_p),(\epsilon_{p+1},\dots,\epsilon_{p+q})\big)$. Under the bijection $\mathrm{CR}_{\bullet}(G,\Bbbk) \simeq \Bbbk.G^n$ the product $\star$ is equal to 
\begin{align*}
(g_1,\dots,g_p) \star (g_{p+1},\dots,g_{p+q}) = (g_1,\dots,g_{p+q}).
\end{align*}
This product is associative and thus provides $\mathrm{CR}_{\bullet}(G,\Bbbk)$ with an associative algebra structure. Previously we have seen that $\mathrm{CR}_{\bullet}(G,\Bbbk)$ is provided with a coZinbiel up to homotopy coalgebra structure (Theorem \ref{Theorem : Zinbiel up to homotopy coalgebra structure on Leibniz homology}). These two structures are compatible, \textit{i.e.} they satisfy the \textit{semi-Hopf relation}
\begin{align*}
\Delta_{\prec} \circ \star = \star_{\otimes} \circ (\Delta_{\prec} \otimes \Delta).
\end{align*}
Therefore the rack homology of an abelian group $G$ with coefficients in a field $\Bbbk$ is a coZinbiel-associative bialgebra. This bialgebra  is connected so Theorem \ref{Theorem : Structure theorem for coZinbiel-associative bialgebras} implies that the coZinbiel-associative bialgebra $(\mathrm{HR}_{\bullet}(G,\Bbbk),\Delta_{\prec},\star)$ is free and cofree over its primitive part.

\section{Rack homology of the linear group} Let $R$ be a ring with unit and $\mathrm{GL}_n(R)$ be the group of invertible $n \times n$ matrices. Let $\oplus$ be the associative product defined on the graded group $\{\mathrm{GL}_n(R)\}_{n \in \mathbb{N}}$ by the formula :
\begin{align*}
A \oplus B := \begin{bmatrix} A & 0 \\ 0 & B \end{bmatrix}.
\end{align*}
The \textit{linear group with coefficients in $R$} $\mathrm{GL}(R)$ is the inductive limit of the system $\{i_{m,n} : \mathrm{GL}_{m}(R) \hookrightarrow \mathrm{GL}_{n}(R)\}_{m,n \in \mathbb{N}}$ where $i_{m,n}(A) = A \oplus \mathrm{I}_{n-m}$ ($\mathrm{I}_p$ is the identity matrix on $R^p$). 
\vskip 0.2cm

For all $n \in \mathbb{N}^*$ let $\mu_n : \mathrm{GL}_n(R) \times \mathrm{GL}_n(R) \to \mathrm{GL}_{2n}(R)$ be the group morphism defined by 
\begin{align*}
\mu_n(A,B) := \left\{\begin{array}{ll}
0 & \text{if } i \neq j \, \mathrm{mod} \, 2,\\
a_{\frac{i+1}{2},\frac{j+1}{2}} & \text{if } i = j = 1 \, \mathrm{mod} \, 2,\\
b_{\frac{i}{2},\frac{j}{2}} & \text{if } i = j = 0 \, \mathrm{mod} \, 2.
\end{array}\right.
\end{align*}
The family of maps $\{\mu_n\}_{n \in \mathbb{N}^*}$ induces a group morphism $\mu : \mathrm{GL}(R) \times \mathrm{GL}(R) \to \mathrm{GL}(R)$. The class of the identity matrices in $\mathrm{GL}(R)$ will be denoted by $e$.
\vskip 0.2cm
A relation between $\mu_n$ and $\oplus$ is given by the following lemma. 
\begin{lemma}\label{Lemma : relation between mu and oplus}
For all $n \in \mathbb{N}^*$ there exists $C_n \in \mathrm{GL}_{2n}(R)$ such that for all $A,B \in \mathrm{GL}_{n}(R)$  
\begin{align*}
C_n^{-1}\mu_n(A,B)C_n = A \oplus B.
\end{align*}
\end{lemma}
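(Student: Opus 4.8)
The plan is to exhibit $C_n$ explicitly as a permutation matrix implementing the ``deinterleaving'' of coordinates, and then to verify the conjugation relation by a direct entrywise computation.

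First I would read off the combinatorial content of the definition of $\mu_n$: the matrix $\mu_n(A,B)$ places the entries of $A$ in the odd-indexed rows and columns and the entries of $B$ in the even-indexed rows and columns, with zeros whenever the row and column indices have different parities. Concretely $\mu_n(A,B)_{2k-1,2l-1} = a_{k,l}$, $\mu_n(A,B)_{2k,2l} = b_{k,l}$, and all other entries vanish. By contrast $A \oplus B$ groups the $A$-block in positions $1,\dots,n$ and the $B$-block in positions $n+1,\dots,2n$. Thus $\mu_n(A,B)$ and $A \oplus B$ differ only by a reindexing of the basis of $R^{2n}$, which suggests taking $C_n$ to be a permutation matrix.

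Then I would define the permutation $\pi$ of $\{1,\dots,2n\}$ by $\pi(k) = 2k-1$ for $1 \leq k \leq n$ and $\pi(n+k) = 2k$ for $1 \leq k \leq n$, so that $\pi$ carries the first half of the indices onto the odd positions and the second half onto the even positions. Let $C_n$ be the associated permutation matrix, whose $j$-th column is the standard basis vector $e_{\pi(j)}$. Since $\pi$ is a bijection, $C_n$ is a $0$--$1$ matrix with exactly one $1$ in each row and column, hence lies in $\mathrm{GL}_{2n}(R)$ over any unital ring $R$, its inverse being its transpose. Crucially $C_n$ depends only on $n$ and not on $A$ or $B$, which is exactly what the statement requires.

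Finally I would verify the relation by computing entries. Conjugation by a permutation matrix satisfies $(C_n^{-1} M C_n)_{ij} = M_{\pi(i),\pi(j)}$ for any $M \in \mathrm{GL}_{2n}(R)$, so it suffices to check $\mu_n(A,B)_{\pi(i),\pi(j)} = (A \oplus B)_{ij}$ in each of three cases: when $1 \leq i,j \leq n$ both $\pi(i),\pi(j)$ are odd and the entry is $a_{i,j} = (A \oplus B)_{ij}$; when $n+1 \leq i,j \leq 2n$ both images are even and the entry is $b_{i-n,j-n} = (A \oplus B)_{ij}$; and when $i,j$ lie in different halves the images $\pi(i),\pi(j)$ have opposite parities, so the entry is $0 = (A \oplus B)_{ij}$. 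This establishes $C_n^{-1}\mu_n(A,B)C_n = A \oplus B$. The computation is routine; the only genuine content is the combinatorial observation that $\mu_n$ is an interleaving of $\oplus$, so that a single coordinate permutation, independent of $A$ and $B$, untangles it. The main point to watch is the parity bookkeeping in the index map $\pi$ together with the sign-free verification that mixed-parity entries vanish.
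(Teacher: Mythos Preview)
Your proof is correct and follows essentially the same idea as the paper: both exhibit $C_n$ as a permutation matrix realizing the deinterleaving of coordinates (the paper simply writes $C_n = \prod_{1 \leq i \leq 2n-1} C_{i,i+1}$ as a product of adjacent column-exchange matrices, without further verification). Your version is more explicit about the underlying permutation $\pi$ and carries out the entrywise check that the paper omits.
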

\begin{proof}
Take $\displaystyle C_n = \prod_{1 \leq i \leq 2n-1} C_{i,i+1}$ where $C_{i,i+1}$ is the matrix exchanging columns $i$ and $i+1$.
\end{proof}
\begin{lemma}\label{Lemma : commutativity of oplus}
For all $m,n \in \mathbb{N}^*$ there exists $D_{m,n} \in \mathrm{GL}_{m+n}(R)$ such that for all $A\in \mathrm{GL}_{m}(R),B \in \mathrm{GL}_n(R)$  
\begin{align*}
D_{m,n}^{-1}(A \oplus B)D_{m,n} = B \oplus A.
\end{align*}
\end{lemma}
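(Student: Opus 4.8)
The plan is to realize $A \oplus B$ and $B \oplus A$ as the same block-diagonal matrix read in two different orderings of the basis, so that they are conjugate by a single permutation matrix that does \emph{not} depend on $A$ or $B$. Concretely, I would let $\tau$ be the permutation of $\{1,\dots,m+n\}$ given by $\tau(i) = i+m$ for $1 \le i \le n$ and $\tau(i) = i-n$ for $n+1 \le i \le m+n$; this is the order-preserving bijection interchanging the two blocks (it sends the positions $\{1,\dots,n\}$ to $\{m+1,\dots,m+n\}$ and $\{n+1,\dots,m+n\}$ to $\{1,\dots,m\}$). Taking $D_{m,n}$ to be the associated permutation matrix, it has entries in $\{0,1\} \subseteq R$ and is invertible with inverse its transpose, hence lies in $\mathrm{GL}_{m+n}(R)$. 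The desired identity is then an instance of the general fact that conjugating a matrix by a permutation matrix permutes its rows and columns simultaneously.

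The key step is the entrywise verification that $D_{m,n}^{-1}(A\oplus B)D_{m,n} = B\oplus A$. Writing $(A\oplus B)_{kl}$ explicitly (equal to $A_{kl}$ on the first block, to $B_{k-m,\,l-m}$ on the second block, and to $0$ off the diagonal blocks) and using the relation $(D_{m,n}^{-1} M D_{m,n})_{ij} = M_{\tau(i),\tau(j)}$, one checks that on the index range $1 \le i,j \le n$ the conjugate has entries $B_{ij}$, on the range $n+1 \le i,j \le m+n$ it has entries $A_{i-n,\,j-n}$, and it vanishes on the mixed ranges. This is exactly $B \oplus A$. The computation is routine bookkeeping with no content beyond tracking indices.

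To match the constructive style of Lemma \ref{Lemma : relation between mu and oplus}, I would alternatively exhibit $D_{m,n}$ as an explicit product of the adjacent column-exchange matrices $C_{i,i+1}$: sliding each of the columns of one block past the columns of the other realizes $\tau$ as a product of adjacent transpositions, producing $D_{m,n}$ without invoking permutation matrices abstractly. The only point requiring any attention is that the statement must hold over an arbitrary unital ring $R$ rather than a field; this causes no difficulty, since permutation matrices have entries $0$ and $1$ and are invertible over any unital ring. I do not expect a genuine obstacle here, as the entire content is simply that swapping two diagonal blocks is a similarity by a permutation matrix.
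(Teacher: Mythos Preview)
Your proposal is correct and is essentially the same as the paper's proof: the paper simply writes down the block permutation matrix $D_{m,n} = \begin{bmatrix} 0 & \mathrm{I}_n \\ \mathrm{I}_m & 0 \end{bmatrix}$, which is exactly the permutation matrix associated to your $\tau$. Your entrywise verification and the alternative description via adjacent column exchanges are fine but more elaborate than what the paper records.
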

\begin{proof}
Take $D_{m,n} := \begin{bmatrix} 0 & \mathrm{I}_n \\ \mathrm{I}_m & 0 \end{bmatrix}$. 
\end{proof}
%\begin{corollary}\label{Corollary : relation between mu and oplus}\
%\begin{enumerate}
%\item For all $n \in \mathbb{N}^*$, $A \in \mathrm{GL}_{2n}(R)$ and $B,C \in \mathrm{GL}_n(R)$
%\begin{align*}
%(\mathrm{I}_{2n} \oplus C_n)^{-1}C_{2n}^{-1}\mu_{2n}\big(A,\mu_n(B,C)\big)C_{2n}(\mathrm{I}_{2n} \oplus C_n) = A \oplus (B \oplus C)
%\end{align*}
%\item For all $n \in \mathbb{N}^*$, $A,B \in \mathrm{GL}_{n}(R)$ and $C \in \mathrm{GL}_{2n}(R)$
%\begin{align*}
%(C_n \oplus \mathrm{I}_{2n})^{-1}C_{2n}^{-1}\mu_{2n}\big(\mu_n(A,B),C\big)C_{2n}(C_n  \oplus \mathrm{I}_{2n}) = (A \oplus B) \oplus C
%\end{align*}
%\item For all $n \in \mathbb{N}^*$ and $A,B \in \mathrm{GL}_n(R)$
%\begin{align*}
%C_nD_n^{-1}C_n^{-1}\mu_n(A,B)C_nD_nC_n^{-1} = \mu_n(B,A)
%\end{align*} 
%\end{enumerate}
%\end{corollary}

\subsection{A commutative Hopf algebra structure on the group homology of the linear group}
In this section we recall how to define a graded Hopf algebra structure on the group homology of the linear group $\mathrm{GL}(R)$. Here we use a direct method but one can look at \cite{LodayCyclic} or \cite{Rosenberg_Algebraic_K_Theory} for a topological construction using the $H$-space structure of $\mathrm{BGL}(R)^+$ .
\vskip 0.2cm
The group morphism $\mu$ induces a product $\star$ on $\mathrm{C}_{\bullet}(\mathrm{GL}(R),\Bbbk)$, called the $\textit{Pontryagin product}$, and defined by the formula
\begin{align*}
\star : \mathrm{C}_{\bullet}(\mathrm{GL}(R),\Bbbk) \otimes \mathrm{C}_{\bullet}(\mathrm{GL}(R),\Bbbk) \stackrel{\mathrm{EZ}}{\longrightarrow} \mathrm{C}_{\bullet}(\mathrm{GL}(R) \times \mathrm{GL}(R),\Bbbk) \stackrel{\mathrm{C}_{\bullet}(\mu)}{\longrightarrow} \mathrm{C}_{\bullet}(\mathrm{GL}(R),\Bbbk).
\end{align*}
An explicit formula for $\star$ is given by
\begin{align*}
\mathrm{F} \star \mathrm{F}' := \sum_{\sigma \in \mathrm{Sh}_{p,q}} \epsilon(\sigma) \, \mu \circ (\mathrm{F} \times \mathrm{F}') \circ \sigma
\end{align*}
for all $\mathrm{F} \in \mathrm{C}_p(G,\Bbbk), \, \mathrm{F}' \in \mathrm{C}_q(G,\Bbbk)$. In this formula $\sigma$ is the functor from $\Delta_{p+q}$ to $\Delta_p \times \Delta_q$ defined as before by 
\begin{align*}
\sigma(j) &:= \left\{\begin{array}{ll} 
\big(\sigma^{-1}(j),j-\sigma^{-1}(j)\big) & \text{if } 1 \leq \sigma^{-1}(j) \leq p ,\\
(j-\sigma^{-1}(j)+p,\sigma^{-1}(j)-p) & \text{if } p+1 \leq \sigma^{-1}(j) \leq p+q. 
\end{array} \right.
\end{align*}
Under the bijection $\mathrm{C}_{n}(\mathrm{GL}(R),\Bbbk) \simeq \Bbbk.\mathrm{GL}(R)^n$ the product $\star$ is equal to
\begin{align*}\displaystyle
(g_1,\dots,g_p) \star (g_{p+1}, \dots , g_{p+q}) = \sum_{\sigma \in \mathrm{Sh}_{p,q}} \epsilon(\sigma) \, (g^{\mu}_{\sigma^{-1}(1)},\dots,g^{\mu}_{\sigma^{-1}(p+q)}),
\end{align*}
where $g_i^{\mu} = \left\{\begin{array}{ll} 
\mu(g_i,e) & \text{if } 1 \leq i \leq p\\
\mu(e,g_i) & \text{if } p+1 \leq i \leq p+q
\end{array}\right.$.
\begin{proposition} \label{Proposition : Pontryagin product is associative and commutative in group homology}
The product $\star$ on $\mathrm{H}_{\bullet}(\mathrm{GL}(R),\Bbbk)$ is associative and commutative.
\end{proposition}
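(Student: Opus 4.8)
The plan is to deduce both properties from two standard facts, thereby bypassing the circumstance that $\mu$ is itself neither associative nor commutative as a pairing on $\mathrm{GL}(R)$. The first fact is that the Eilenberg--Zilber map is associative and graded-commutative up to natural chain homotopy: writing $T : \mathrm{GL}(R)\times\mathrm{GL}(R) \to \mathrm{GL}(R)\times\mathrm{GL}(R)$ for the swap of the two factors and $\tau$ for the swap on $\mathrm{C}_\bullet \otimes \mathrm{C}_\bullet$, one has $\mathrm{EZ}\circ\tau \simeq \mathrm{C}_\bullet(T)\circ\mathrm{EZ}$ up to the Koszul sign, together with the analogous homotopy-associativity of $\mathrm{EZ}$. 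The second fact is classical (cf. \cite{Brown_Cohomology_of_groups}): for any group $H$ and any $g \in H$ the inner automorphism $c_g$ induces the identity on $\mathrm{H}_\bullet(H,\Bbbk)$.

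For commutativity I would argue as follows. Since $\star = \mathrm{H}_\bullet(\mu)\circ\mathrm{EZ}$, homotopy-commutativity of $\mathrm{EZ}$ reduces the claim to comparing $\mathrm{H}_\bullet(\mu)$ with $\mathrm{H}_\bullet(\mu\circ T) = \mathrm{H}_\bullet(\mu)\circ\mathrm{H}_\bullet(T)$. Using Lemma \ref{Lemma : relation between mu and oplus} to write $\mu_n(A,B) = C_n(A\oplus B)C_n^{-1}$ and Lemma \ref{Lemma : commutativity of oplus} to write $A\oplus B = D_{n,n}(B\oplus A)D_{n,n}^{-1}$, I obtain $\mu_n(B,A) = g_n^{-1}\mu_n(A,B)g_n$ with $g_n := C_n D_{n,n} C_n^{-1}$ \emph{independent of} $A,B$. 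Hence $\mu\circ T = c_g\circ\mu$ for a single element $g\in\mathrm{GL}(R)$, and the second fact gives $\mathrm{H}_\bullet(\mu\circ T) = \mathrm{H}_\bullet(\mu)$. Combining the two ingredients yields $F\star F' = (-1)^{pq}F'\star F$ on homology.

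Associativity is entirely parallel. Both $\mu\circ(\mu\times\mathrm{id})$ and $\mu\circ(\mathrm{id}\times\mu)$, viewed as group morphisms $\mathrm{GL}(R)^{\times 3}\to\mathrm{GL}(R)$, are conjugate --- by fixed elements assembled from products of the matrices $C_n$ and suitable column-permutation matrices --- to a common threefold interleaving, hence to one another. They therefore agree on homology, and homotopy-associativity of $\mathrm{EZ}$ propagates this equality to $\star$.

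The main obstacle I anticipate is purely the bookkeeping of the conjugating elements: one must verify that each comparison is realized by an inner automorphism by a \emph{single} element that is independent of the arguments, and that these elements are well defined in the colimit $\mathrm{GL}(R)$, i.e. compatible with the stabilization maps $i_{m,n}$. The conceptual content --- homotopy (co)associativity and (co)commutativity of $\mathrm{EZ}$ and the triviality of inner automorphisms on group homology --- is standard; the genuine work lies in assembling Lemmas \ref{Lemma : relation between mu and oplus} and \ref{Lemma : commutativity of oplus} into such fixed conjugators.
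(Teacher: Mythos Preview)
Your approach is essentially the paper's own, and the conceptual core is correct: both arguments rest on Lemmas \ref{Lemma : relation between mu and oplus} and \ref{Lemma : commutativity of oplus} together with the triviality of inner automorphisms on group homology. Where you invoke the homotopy-commutativity and homotopy-associativity of $\mathrm{EZ}$, the paper unwinds this at the chain level via the explicit shuffle bijections $\mathrm{Sh}_{p,q} \stackrel{\iota}{\simeq} \mathrm{Sh}_{q,p}$ and $\mathrm{Sh}_{p+q,r} \times \mathrm{Sh}_{p,q} \simeq \mathrm{Sh}_{p,q,r} \simeq \mathrm{Sh}_{p,q+r} \times \mathrm{Sh}_{q,r}$; the content is the same.

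There is, however, a genuine (if small) gap in what you flag as ``purely the bookkeeping''. You hope that the level-wise conjugators $g_n = C_n D_{n,n} C_n^{-1}$ assemble into a single $g \in \mathrm{GL}(R)$ compatible with the stabilization maps. They do \emph{not}: $g_n \in \mathrm{GL}_{2n}(R)$ is the permutation matrix swapping positions $(1,2),(3,4),\ldots,(2n-1,2n)$, so $g_{n+1} = g_n \oplus \left(\begin{smallmatrix}0&1\\1&0\end{smallmatrix}\right)$ rather than $g_n \oplus \mathrm{I}_2$, and one checks directly that no fixed $g\in\mathrm{GL}(R)$ can satisfy $\mu\circ T = c_g\circ\mu$ (any representative of $g$ lies in some $\mathrm{GL}_N(R)$ and acts trivially beyond position $N$, while the interleaving $\mu_n$ is non-trivial in arbitrarily high positions). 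The paper sidesteps this by working one chain at a time: given $(g_1,\ldots,g_{p+q})$, choose representatives $A_i$ all of a common size $n$; then the two products live in $\mathrm{GL}_{2n}(R)^{p+q}$ and differ by conjugation by fixed matrices $X, Y \in \mathrm{GL}_{2n}(R) \subset \mathrm{GL}(R)$. Equivalently, one compares $\mathrm{H}_\bullet(\mu)$ and $\mathrm{H}_\bullet(\mu\circ T)$ on the image of each $\mathrm{H}_\bullet\big(\mathrm{GL}_n(R) \times \mathrm{GL}_n(R)\big)$ separately, using that group homology commutes with the filtered colimit $\mathrm{GL}(R) = \varinjlim \mathrm{GL}_n(R)$. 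Your argument is easily repaired along these lines, but the fix is conceptual rather than clerical.
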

One topological way to prove this proposition is to use the topological space $\mathrm{BGL}(R)^+$. The product $\mu$ endows the space $\mathrm{BGL}(R)^+$ with an associative and commutative $H$-space structure, and, using the isomorphism between the singular homology of $\mathrm{BGL}(R)^+$ and the group homology of $\mathrm{GL}(R)$, we deduce the associativity and commutativity of $\star$ in homology (cf. \cite{Rosenberg_Algebraic_K_Theory} pp.274-275 or \cite{LodayCyclic} pp.350-351). Here we give a direct proof (it means without the space $\mathrm{BGL}(R)^+$) which is essentially equivalent.
\begin{proof} The associativity and commutativity of $\star$ is a consequence of Lemma \ref{Lemma : relation between mu and oplus} and Lemma \ref{Lemma : commutativity of oplus} and of the invariance by conjugation of the group homology : Indeed, let $p,q,r \in \mathbb{N}^*$ and $(g_1,\dots,g_{p+q+r}) \in \mathrm{GL}(R)^{p+q+r}$. For all $1 \leq i \leq p+q+r$ let $A_{i}$ be an element in the class $g_{i}$. By adding $1's$ on the diagonal we can suppose that the $A_i's$ are all of the same size $n$. A representative in $\mathrm{GL}_{4n}^{p+q+r}(R)$ of $(g_1,\dots,g_p) \star \big((g_{p+1},\dots,g_{p+q}) \star (g_{p+q+1},\dots,g_{p+q+r})\big)$ is given by 
\begin{align*} \displaystyle
\sum_{\sigma \in \mathrm{Sh}_{p,q+r}} \sum_{\gamma \in \mathrm{Sh}_{q,r}} \epsilon(\sigma)\epsilon(\gamma) \, (a_{\beta(\sigma,\gamma)^{-1}(1)},\dots,a_{\beta(\sigma,\gamma)^{-1}(p+q+r)})
\end{align*}
where $a_i$ is equal to
\begin{itemize}
\item $\mu_{2n}(A_i \oplus \mathrm{I}_n,\mathrm{I}_n \oplus \mathrm{I}_n)$ for all $1 \leq i \leq p$,
\item $\mu_{2n}\big(\mathrm{I}_n \oplus \mathrm{I}_n,\mu_n(A_i,\mathrm{I}_n)\big)$ for all $p+1 \leq i \leq p+q$,
\item $\mu_{2n}\big(\mathrm{I}_n \oplus \mathrm{I}_n,\mu_n(\mathrm{I}_n,A_i)\big)$ for all $p+q+1 \leq i \leq p+q+r$.
\end{itemize} 
A representative in $\mathrm{GL}_{4n}^{p+q+r}(R)$ of $\big((g_1,\dots,g_p) \star (g_{p+1},\dots,g_{p+q})\big) \star (g_{p+q+1},\dots,g_{p+q+r})\big)$ is given by
\begin{align*}\displaystyle
\sum_{\sigma \in \mathrm{Sh}_{p+q,r}} \sum_{\gamma \in \mathrm{Sh}_{p,q}} \epsilon(\sigma)\epsilon(\gamma) \, (b_{\alpha(\sigma,\gamma)^{-1}(1)},\dots,b_{\alpha(\sigma,\gamma)^{-1}(p+q+r)})
\end{align*}
where $b_i$ is equal to
\begin{itemize}
\item $\mu_{2n}\big(\mu_n(A_i \oplus \mathrm{I}_n),\mathrm{I}_n \oplus \mathrm{I}_n\big)$ for all $1 \leq i \leq p$,
\item $\mu_{2n}\big(\mu_n(\mathrm{I}_n,A_i)\mathrm{I}_n \oplus \mathrm{I}_n\big)$ for all $p+1 \leq i \leq p+q$,
\item $\mu_{2n}\big(\mathrm{I}_n \oplus \mathrm{I}_n,A_i \oplus \mathrm{I}_n)$ for all $p+q+1 \leq i \leq p+q+r$.
\end{itemize} 
Lemma \ref{Lemma : relation between mu and oplus} and Lemma \ref{Lemma : commutativity of oplus}, imply that for all $n \in \mathbb{N}^*$ there are  matrices $X,Y \in \mathrm{GL}_{4n}(R)$ such that for all $A \in \mathrm{GL}_n(R)$ 
\begin{itemize}
\item $X^{-1}\mu_{2n}(A \oplus \mathrm{I}_n,\mathrm{I}_n \oplus \mathrm{I}_n)X = A \oplus \mathrm{I}_n \oplus \mathrm{I}_n \oplus \mathrm{I}_n = Y^{-1}\mu_{2n}(\mu_n(A,\mathrm{I}_n),\mathrm{I}_n \oplus \mathrm{I}_n)Y$,
\item $X^{-1}\mu_{2n}\big(\mathrm{I}_n \oplus \mathrm{I}_n,\mu_n(A,\mathrm{I}_n)\big)X = \mathrm{I}_n \oplus \mathrm{I}_n \oplus A \oplus \mathrm{I}_n = Y^{-1}\mu_{2n}(\mu_n(\mathrm{I}_n,A),\mathrm{I}_n \oplus \mathrm{I}_n)Y $,
\item $X^{-1}\mu_{2n}\big(\mathrm{I}_n \oplus \mathrm{I}_n,\mu_n(\mathrm{I}_n,A)\big)X = \mathrm{I}_n \oplus \mathrm{I}_n \oplus \mathrm{I}_n \oplus A =  Y^{-1}\mu_{2n}(\mathrm{I}_n \oplus \mathrm{I}_n,A \oplus \mathrm{I}_n)Y$.
\end{itemize} 
Then, using the change of variables $\mathrm{Sh}_{p,q+r} \times \mathrm{Sh}_{q,r} \stackrel{\alpha}{\simeq} \mathrm{Sh}_{p,q,r} \stackrel{\beta}{\simeq} \mathrm{Sh}_{p,q+r} \times \mathrm{Sh}_{p,q}$, the invariance by conjugation of the group homology imply the associativity.

\vskip 0.2cm

A representative in $\mathrm{GL}_{2n}^{p+q}(R)$ of $(g_1,\dots,g_p) \star (g_{p+1},\dots,g_{p+q})$ is given by
\begin{align*} \displaystyle
\sum_{\sigma \in \mathrm{Sh}_{p,q}} \epsilon(\sigma) (a_{\sigma^{-1}(1)},\dots,a_{\sigma^{-1}(p+q)})
\end{align*}
where $a_i$ is equal to
\begin{itemize}
\item $\mu_{2n}(A_i,\mathrm{I}_n)$ if $1 \leq i \leq p$,
\item $\mu_{2n}(\mathrm{I}_n,A_i)$ if $p+1 \leq i \leq p+q$.
\end{itemize}
A representative in $\mathrm{GL}_{2n}(R)$ of $(g_{p+1},\dots,g_{p+q}) \star (g_1,\dots,g_p)$ is given by
\begin{align} \displaystyle
\sum_{\gamma \in \mathrm{Sh}_{q,p}} \epsilon(\gamma) (b_{\gamma^{-1}(1)},\dots,b_{\gamma^{-1}(p+q)})
\end{align}
where $b_i$ is equal to
\begin{itemize}
\item $\mu_{2n}(\mathrm{I}_n,A_i)$ if $1 \leq i \leq p$,
\item $\mu_{2n}(A_i,\mathrm{I}_n)$ if $p+1 \leq i \leq p+q$.
\end{itemize}
Lemma \ref{Lemma : relation between mu and oplus} and Lemma \ref{Lemma : commutativity of oplus}, imply that for all $n \in \mathbb{N}^*$ there are  matrices $X,Y \in \mathrm{GL}_{2n}(R)$ such that for all $A \in \mathrm{GL}_n(R)$ 
\begin{align*}
X^{-1}\mu_{2n}(A, \mathrm{I}_n)X = A \oplus \mathrm{I}_n = Y^{-1} \mu_{2n}(\mathrm{I_n},A)Y,\\
X^{-1}\mu_{2n}(\mathrm{I}_n,A)X = \mathrm{I}_n \oplus A= Y^{-1} \mu_{2n}(A,\mathrm{I_n})Y.
\end{align*}
Then, using the change of variables $\mathrm{Sh}_{p,q} \stackrel{\iota}{\simeq} \mathrm{Sh}_{q,p}$, the invariance by conjugation of the group homology implies the commutativity.
\end{proof}
With this product $\mathrm{H}_{\bullet}(\mathrm{GL}(R),\Bbbk)$ is an associative and commutative algebra. Moreover group homology is naturally endowed with a cocommutative coalgebra (Theorem \ref{Theorem : commutative up to homotopy coalgebra structure on C(X)}). These algebra and coalgebra structures on $\mathrm{H}_{\bullet}(\mathrm{GL}(R),\Bbbk)$ are compatible, \textit{i.e.} they satisfy the \textit{Hopf relation}
\begin{align*}
\Delta \circ \star = \star_{\otimes} \circ (\Delta \otimes \Delta)
\end{align*} 
Therefore the group homology of $\mathrm{GL}(R)$ with trivial coefficients in a field $\Bbbk$ is a commutative Hopf algebra. This Hopf algebra is connected so if $\Bbbk$ is a field of characteristic $0$ the Hopf-Borel Theorem \ref{Theorem : Hopf-Borel} implies that the Hopf algebra $(\mathrm{H}_{\bullet}(\mathrm{GL}(R),\Bbbk),\Delta,\star)$ is free and cofree over its primitive part.

\subsection{A coZinbiel-associative bialgebra structure on the rack homology of the general linear group} The multiplication $\mu : \mathrm{GL}(R) \times \mathrm{GL}(R) \to \mathrm{GL}(R)$ being a group morphism, it is rack morphism. As a consequence the chain complex $\mathrm{CR}_{\bullet}(\mathrm{GL}(R),\Bbbk)$ computing the rack homology of $\mathrm{GL}(R)$ is provided with a product $\star$, still called the Pontryagin product, and defined by the same formula as before.
\begin{align*}
\star : \mathrm{CR}_{\bullet}(\mathrm{GL}(R),\Bbbk) \otimes \mathrm{CR}_{\bullet}(\mathrm{GL}(R),\Bbbk) \stackrel{\mathrm{EZ}}{\longrightarrow} \mathrm{CR}_{\bullet}(\mathrm{GL}(R) \times \mathrm{GL}(R),\Bbbk) \stackrel{\mathrm{CR}_{\bullet}(\mu)}{\longrightarrow} \mathrm{CR}_{\bullet}(\mathrm{GL}(R),\Bbbk). 
\end{align*}
An explicit formula for $\star$ on $\mathrm{CR}_{\bullet}(\mathrm{GL}(R),\Bbbk)$ is given by :
\begin{align*}
\mathrm{F} \star \mathrm{F}' = \mu \circ (\mathrm{F} \times \mathrm{F}') \circ \mathrm{i}_{p,q}
\end{align*}
where $i_{p,q}$ is the functor from $\square_{p+q}$ to $\square_{p} \times \square_q$ defined by $\mathrm{i}_{p,q}(\epsilon_1,\dots,\epsilon_{p+q}) = \big((\epsilon_1,\dots,\epsilon_p),(\epsilon_{p+1},\dots,\epsilon_{p+q})\big)$. Under the bijection $\mathrm{CR}_{\bullet}(\mathrm{GL}(R),\Bbbk) \simeq \Bbbk.\mathrm{GL}(R)^n$ the product $\star$ is equal to 
\begin{align*}
(g_1,\dots,g_p) \star (g_{p+1},\dots,g_{p+q}) = (g_1^{\mu},\dots,g_{p+q}^{\mu}).
\end{align*}
%\begin{proposition} FAUX
%In homology the product $\star$ is given by
%\begin{align}
%(g_1,\dots,g_p) \star (g_{p+1},\dots,g_{p+q}) = (g_1^{\oplus},\dots,g_{p+q}^{\oplus}).
%\end{align}
%where $g_i^{\oplus} = \left\{\begin{array}{ll} 
%A_i \oplus \mathrm{I} = g_i & \text{if } 1 \leq i \leq p\\
%\overline{\mathrm{I}_p \oplus A_i} & \text{if } p+1 \leq i \leq p+q
%\end{array}\right.$.
%\end{proposition}
%\begin{proof}
%Same as Proposition \ref{Proposition : produit star en homologie}. ATTENTION rack homologie invariant
%\end{proof}
\begin{proposition}
The product $\star$ on $\mathrm{HR}_{\bullet}(\mathrm{GL}(R),\Bbbk)$ is associative.
\end{proposition}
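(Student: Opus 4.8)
The plan is to follow the proof of Proposition~\ref{Proposition : Pontryagin product is associative and commutative in group homology} almost verbatim, the only structural simplification being that the cubical product $\mathrm{F}\star\mathrm{F}'=\mu\circ(\mathrm{F}\times\mathrm{F}')\circ\mathrm{i}_{p,q}$ carries no Eilenberg--Zilber shuffle sum, and the only genuinely new ingredient being the invariance of rack homology under conjugation in place of the classical invariance of group homology. Fix $p,q,r\in\mathbb{N}^*$ and classes $g_1,\dots,g_{p+q+r}\in\mathrm{GL}(R)$; choosing representatives $A_i\in\mathrm{GL}_n(R)$ and adding $1$'s on the diagonal, I may assume that all $A_i$ have the same size $n$.

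First I would write down single chain-level representatives (with no outer sum) of the two bracketings. For $\mathrm{F}\star(\mathrm{F}'\star\mathrm{F}'')$ this is the tuple $(a_1,\dots,a_{p+q+r})$ with $a_i=\mu_{2n}(A_i\oplus\mathrm{I}_n,\mathrm{I}_n\oplus\mathrm{I}_n)$ for $1\le i\le p$, with $a_i=\mu_{2n}(\mathrm{I}_n\oplus\mathrm{I}_n,\mu_n(A_i,\mathrm{I}_n))$ for $p+1\le i\le p+q$, and with $a_i=\mu_{2n}(\mathrm{I}_n\oplus\mathrm{I}_n,\mu_n(\mathrm{I}_n,A_i))$ for $p+q+1\le i\le p+q+r$; for $(\mathrm{F}\star\mathrm{F}')\star\mathrm{F}''$ it is the analogous tuple $(b_1,\dots,b_{p+q+r})$ with the three blocks of $\mu$'s arranged on the opposite side. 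These are exactly the summands already appearing in the group-homology computation, with the outer sums over $\mathrm{Sh}_{p,q+r}\times\mathrm{Sh}_{q,r}$ and $\mathrm{Sh}_{p+q,r}\times\mathrm{Sh}_{p,q}$ deleted.

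Next, Lemma~\ref{Lemma : relation between mu and oplus} and Lemma~\ref{Lemma : commutativity of oplus} furnish matrices $X,Y\in\mathrm{GL}_{4n}(R)$, depending only on $n$, such that $X^{-1}a_iX=Y^{-1}b_iY=:c_i$ for every $i$, where each $c_i$ is a block-diagonal matrix (a suitable ordering of $A_i$ and three copies of $\mathrm{I}_n$) that is independent of the way the triple product was bracketed. The essential point is that a \emph{single} matrix $X$ (respectively $Y$) simultaneously normalizes all entries of the tuple, so that passing from $(a_i)_i$ to $(c_i)_i$ is precisely the application of the rack automorphism $-\lhd X$ to every tensor factor, and likewise $-\lhd Y$ carries $(b_i)_i$ to $(c_i)_i$.

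Finally I would invoke the invariance of rack homology under conjugation: for a fixed $g\in\mathrm{GL}(R)$ the map $-\lhd g$ induces the identity on $\mathrm{HR}_\bullet(\mathrm{GL}(R),\Bbbk)$. Then $(a_i)_i$ and $(b_i)_i$ are each homologous to $(c_i)_i$, hence to one another, which is exactly associativity of $\star$ in homology. The main obstacle is this last input, which, unlike its group-homology counterpart, is not classical: I would have to produce an explicit chain homotopy $h$ on $\mathrm{CR}_\bullet(\mathrm{GL}(R),\Bbbk)$ with $d\,h+h\,d=(-\lhd g)-\mathrm{id}$, naturally built by inserting the element $g$ into each of the available positions of an $n$-cube of $\mathrm{N^R}(\mathrm{GL}(R))$, and then verify using the cubical identities that it has the asserted boundary. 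Granting this homotopy, the remaining steps are the purely formal bookkeeping already carried out in the group case.
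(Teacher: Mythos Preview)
Your approach is exactly the paper's: reduce to the argument of Proposition~\ref{Proposition : Pontryagin product is associative and commutative in group homology}, replacing the invariance of group homology under inner automorphisms by the corresponding invariance of rack homology, which you correctly isolate as the one new ingredient. The only point on which the paper is sharper is the homotopy itself: rather than inserting $g$ ``into each of the available positions'' and checking cubical identities, the paper uses the single-insertion map $h_a(x_1,\dots,x_n)=(a,x_1,\dots,x_n)$ and observes directly that this is a chain homotopy between $\mathrm{id}$ and $\mathrm{CR}_\bullet(c_a)$; your proposed construction would also work but is more laborious than necessary.
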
 
\begin{proof} The proof is the same proof as in Proposition \ref{Proposition : Pontryagin product is associative and commutative in group homology}. The only thing we have to prove is the invariance by conjugation of the rack homology.
\begin{lemma}\label{Lemma : invariance by conjugation of rack homology}
Let $X$ be a rack and $a \in X$. The conjugation map $c_a = - \lhd a$ induces the identity in homology.
\end{lemma}
Let $h_a : \mathrm{CR}_{\bullet}(X,\Bbbk) \to \mathrm{CR}_{\bullet}(X,\Bbbk)[1]$ be the map defined by 
%\begin{align}
$h_a(x_1,\dots,x_n) := (a,x_1,\dots,x_n)$.
%\end{align}
This is a chain homotopy between the identity chain map of $\mathrm{CR}_{\bullet}(X,\Bbbk)$ and the chain map $\mathrm{CR}_{\bullet}(c_a)$. Therefore the rack homology of $X$ is invariant by conjugation by $a$. 
\end{proof}
In conclusion $\mathrm{HR}_{\bullet}(\mathrm{GL}(R),\Bbbk)$ is provided with an associative algebra structure and a coZinbiel coalgebra structure (Theorem \ref{Theorem : Zinbiel up to homotopy coalgebra structure on rack homology}). These structures being compatible, \textit{i.e.} they satisfy the \textit{semi-Hopf relation}
%\begin{align}
$\Delta_{\prec} \circ \star = \star_{\otimes} \circ (\Delta_{\prec} \otimes \Delta)$,
%\end{align}
we proved the following theorem.
\begin{theorem}\label{Theorem : coZinbiel-associative bialgebra structure on the rack homology of the linear group}
$(\mathrm{HR}_{\bullet}(\mathrm{GL}(R),\Bbbk),\Delta_{\prec},\star)$ is a connected coZinbiel-associative bialgebra. As a consequence this bialgebra is free and cofree over its primitive part $\mathcal{P}$.
\begin{align*}
\mathrm{HR}_{\bullet}(\mathrm{GL}(R),\Bbbk) \simeq \mathrm{T}(\mathcal{P})
\end{align*}
\end{theorem}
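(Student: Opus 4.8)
The plan is to assemble the algebra and coalgebra structures already established and then invoke the structure theorem. First I would record that the Pontryagin product $\star$ induced by $\mu$ makes $\mathrm{HR}_{\bullet}(\mathrm{GL}(R),\Bbbk)$ an associative algebra; associativity follows exactly as in Proposition \ref{Proposition : Pontryagin product is associative and commutative in group homology}, the only new ingredient being the invariance of rack homology under conjugation, which is supplied by Lemma \ref{Lemma : invariance by conjugation of rack homology} through the explicit chain homotopy $h_a(x_1,\dots,x_n) = (a,x_1,\dots,x_n)$. Second, by Theorem \ref{Theorem : Zinbiel up to homotopy coalgebra structure on rack homology} the rack homology carries a coZinbiel coproduct $\Delta_{\prec}$, whose symmetrization $\Delta = \Delta_{\prec} + \tau \circ \Delta_{\prec}$ is the cocommutative coproduct coming from the cubical set structure of $\mathrm{N^R}(\mathrm{GL}(R))$.

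The crux is the compatibility of these two structures, the semi-Hopf relation $\Delta_{\prec} \circ \star = \star_{\otimes} \circ (\Delta_{\prec} \otimes \Delta)$. I would verify this at the level of the normalized chain complex $\mathrm{CR}_{\bullet}(\mathrm{GL}(R),\Bbbk)$ using the explicit formulas: under the bijection $\mathrm{CR}_n \simeq \Bbbk.\mathrm{GL}(R)^n$ the product is the map $(g_1,\dots,g_p)\star(g_{p+1},\dots,g_{p+q}) = (g_1^\mu,\dots,g_{p+q}^\mu)$ and $\Delta_{\prec}$ is the half-shuffle coproduct \eqref{Deltaprec}. Matching the two sides reduces to a shuffle identity together with the bijections $\mathrm{Sh}_{p+q,r}\times\mathrm{Sh}_{p,q} \simeq \mathrm{Sh}_{p,q,r} \simeq \mathrm{Sh}_{p,q+r}\times\mathrm{Sh}_{q,r}$ recorded in the preliminary section. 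Since $\Delta_{\prec}$ and $\Delta$ are canonical only up to homotopy, I would phrase the verification as an equality of induced maps on homology, invoking the acyclic models uniqueness (Theorem \ref{Theorem : acyclic models theorem 2}) to pass from a chain-level check in low degrees to all degrees. This compatibility is the step I expect to require the most care.

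Finally I would observe that the bialgebra is connected: one has $\mathrm{HR}_0(\mathrm{GL}(R),\Bbbk) = \Bbbk$, and the reduced half-shuffle coproduct sends a class of degree $n$ into a sum of tensors of bidegrees $(p,q)$ with $p+q=n$ and $p,q \geq 1$, so it strictly lowers degree in each factor on the augmentation ideal; hence the conilpotency filtration $F_r$ exhausts the coalgebra. With connectedness in hand, Theorem \ref{Theorem : Structure theorem for coZinbiel-associative bialgebras} applies verbatim over a field of any characteristic: the bialgebra is free and cofree over its primitive part $\mathcal{P} = \mathrm{Prim}(\mathrm{HR}_{\bullet})$, and is therefore isomorphic as a bialgebra to the tensor bialgebra $(\mathrm{T}(\mathcal{P}),\cdot,\Delta_{\prec})$ of Example \ref{Example : Bialgebra Zinb-As T(V)}, yielding the stated isomorphism $\mathrm{HR}_{\bullet}(\mathrm{GL}(R),\Bbbk) \simeq \mathrm{T}(\mathcal{P})$.
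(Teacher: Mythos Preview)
Your proposal is correct and follows the same approach as the paper, which is in fact terser than you on both the semi-Hopf compatibility and the connectedness, simply asserting them before invoking Theorem \ref{Theorem : Structure theorem for coZinbiel-associative bialgebras}. One small correction: the shuffle bijection relevant to the semi-Hopf relation is not the $(p,q,r)$-shuffle associativity bijection you cite but rather the decomposition of a half-shuffle on the concatenation $(g_1^{\mu},\dots,g_{p+q}^{\mu})$ into a half-shuffle on one factor and a full shuffle on the other.
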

This theorem is consistent with the conjecture of J.-L. Loday about the existence of a coZinbiel-associative bialgebra structure on the conjectural Leibniz homology of the linear group.

%
%\subsection{Algebraic $K$-theory and Conjectural Leibniz $K$-theory}  

\appendix

\section{Acyclic models} This section is a summary of the Eilenberg-MacLane article \cite{EilenbergMacLane} on acyclic models theory. This theory provides a conceptual way to prove existence and unicity (up to homotopy) of chain morphisms between chain complexes. 

\subsection{Representable functor} \label{Appendix : Representable functor} Let $\mathbf{A}$ be a category, $\mathcal{M}$ be a set of \textit{models} (\textit{i.e.} a subset of the class of objects in $\mathbf{A}$), and $\mathrm{T}$ be a functor from $\mathbf{A}$ to $\Bbbk\mathbf{Mod}$. Let us denote by $\widetilde{\mathrm{T}}$ the functor from $\mathbf{A}$ to $\Bbbk\mathbf{Mod}$ defined by $\widetilde{\mathrm{T}}(A) = \Bbbk.\{(\phi,m) \, | \phi : M \to A \, , \, m \in M \, , \, \forall M \in \mathcal{M}\}$ on objects, and $\widetilde{\mathrm{T}}(f) = (f \circ \phi, m)$ on morphisms. There is a natural transformation $\Phi$ from $\widetilde{\mathrm{T}}$ to $\mathrm{T}$ defined by $\Phi_A(\phi,m) = \mathrm{T}(\phi)(m)$. The functor $\mathrm{T}$ is said \textit{representable} if there is a natural transformation $\Psi$ from $\mathrm{T}$ to $\widetilde{\mathrm{T}}$ satisfying $\Phi \circ \Psi = \mathrm{id}$.

\begin{lemma} \label{the quotient of a representable is representable}
Let $\mathbf{A}$ be a category, $\mathrm{T}$ and $\mathrm{T}_1$ be functors from $\mathbf{A}$ to $\mathbf{Mod}$, and $\xi : \mathrm{T} \to \mathrm{T}_1$ and $\eta : \mathrm{T}_1 \to \mathrm{T}$ be natural transformations such that $\xi \circ \eta = \mathrm{id}$. If $\mathrm{T}$ is representable then so is $\mathrm{T_1}$.
\end{lemma}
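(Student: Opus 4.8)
The plan is to upgrade the assignment $\mathrm{T} \mapsto \widetilde{\mathrm{T}}$ to a construction that is functorial in the functor variable, and then to exploit the naturality of $\Phi$ in that same variable. First I would observe that a natural transformation $\alpha : \mathrm{T} \to \mathrm{T}'$ induces a natural transformation $\widetilde{\alpha} : \widetilde{\mathrm{T}} \to \widetilde{\mathrm{T}'}$ by the formula $\widetilde{\alpha}_A(\phi,m) = (\phi, \alpha_M(m))$ on a generator $(\phi,m)$ with $\phi : M \to A$ and $m \in \mathrm{T}(M)$, extended $\Bbbk$-linearly. Since $\alpha_M(m)$ is a single element of $\mathrm{T}'(M)$, this sends generators to generators and so is well defined; naturality in $A$ is immediate from $\widetilde{\mathrm{T}}(f)(\phi,m) = (f \circ \phi, m)$.

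The crucial point is that $\Phi$ is natural in the functor variable, meaning $\alpha \circ \Phi = \Phi' \circ \widetilde{\alpha}$, where $\Phi'$ denotes the transformation associated to $\mathrm{T}'$. This follows directly from the naturality of $\alpha$: on a generator $(\phi,m)$ one computes $(\alpha \circ \Phi)_A(\phi,m) = \alpha_A\big(\mathrm{T}(\phi)(m)\big)$ and $(\Phi' \circ \widetilde{\alpha})_A(\phi,m) = \mathrm{T}'(\phi)\big(\alpha_M(m)\big)$, and these agree because $\alpha_A \circ \mathrm{T}(\phi) = \mathrm{T}'(\phi) \circ \alpha_M$.

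With these two observations in hand, I would define
\[
\Psi_1 := \widetilde{\xi} \circ \Psi \circ \eta : \mathrm{T}_1 \to \mathrm{T} \to \widetilde{\mathrm{T}} \to \widetilde{\mathrm{T}_1},
\]
which is a natural transformation as a composite of such. Writing $\Phi_1$ for the transformation attached to $\mathrm{T}_1$ and applying the naturality of $\Phi$ with $\alpha = \xi$ yields $\Phi_1 \circ \widetilde{\xi} = \xi \circ \Phi$, whence
\[
\Phi_1 \circ \Psi_1 = \Phi_1 \circ \widetilde{\xi} \circ \Psi \circ \eta = \xi \circ \Phi \circ \Psi \circ \eta = \xi \circ \eta = \mathrm{id}_{\mathrm{T}_1},
\]
using the representability hypothesis $\Phi \circ \Psi = \mathrm{id}$ and the assumption $\xi \circ \eta = \mathrm{id}$. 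This exhibits $\Psi_1$ as a section of $\Phi_1$, proving $\mathrm{T}_1$ representable.

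The verification itself is essentially a one-line diagram chase; the only genuine content is spotting the correct candidate $\Psi_1 = \widetilde{\xi} \circ \Psi \circ \eta$ and establishing the functoriality of $\widetilde{(-)}$ together with the naturality of $\Phi$ in the functor argument. I expect the sole (minor) obstacle to be the bookkeeping needed to confirm that $\widetilde{\alpha}$ is well defined on the free modules $\widetilde{\mathrm{T}}(A)$ and natural in $A$, but these are routine checks on generators.
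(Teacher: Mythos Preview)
The paper does not supply a proof of this lemma; it is merely quoted from Eilenberg--MacLane's original article on acyclic models, and the appendix moves on immediately to the next subsection. Your argument is correct and is precisely the standard one: the key observation is that $\widetilde{(-)}$ is functorial in the functor variable and that $\Phi$ is natural there, after which $\Psi_1 := \widetilde{\xi}\circ\Psi\circ\eta$ is the obvious candidate and the verification $\Phi_1\circ\Psi_1 = \xi\circ\Phi\circ\Psi\circ\eta = \xi\circ\eta = \mathrm{id}$ is immediate. There is nothing to compare against, and nothing to correct.
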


\subsection{Representability of the evaluation functor}Let $\mathbf{A}$ be a category and $A$ be an object of $\mathbf{A}$. The \textit{evaluation functor at $A$} is the functor $\mathrm{ev}_{A}$ from $[\mathbf{C}^{op},\Bbbk\mathbf{Mod}]$ to $\Bbbk\mathbf{Mod}$ defined on objects by $\mathrm{ev}_{A}(\mathrm{X}) = \mathrm{X}(A)$ and on morphisms by $\mathrm{ev}_{A}(\tau) = \tau_A$. 

\begin{proposition}
For all object $A \in \mathbf{A}$ the evaluation functor $\mathrm{ev}_A$ is representable (with set of models the set with one element $\{\Bbbk.\mathrm{Hom}_{\mathbf{A}}(-,A)\}$).
\end{proposition}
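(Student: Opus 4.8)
The plan is to imitate the proof of the representability of $\mathrm{Q}_n$ in Lemma~\ref{Proposition : CLn representable}, the single model being played here by $M := \Bbbk.\mathrm{Hom}_{\mathbf{C}}(-,A)$ (so $A$ is an object of $\mathbf{C}$ and the evaluation is taken on presheaves $\mathrm{X} \in [\mathbf{C}^{\mathrm{op}},\Bbbk\mathbf{Mod}]$). Writing $\mathrm{T} := \mathrm{ev}_A$, the evaluation $\mathrm{T}(M) = \mathrm{ev}_A(M) = \Bbbk.\mathrm{Hom}_{\mathbf{C}}(A,A)$ contains the basis element $[\mathrm{id}_A]$ attached to the identity of $A$; this is the analogue of $\overline{\mathrm{id}_{\square_n}}$ in the earlier lemma. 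I would define a natural transformation $\Psi$ from $\mathrm{ev}_A$ to $\widetilde{\mathrm{ev}_A}$ by the formula
\begin{align*}
\Psi_{\mathrm{X}} : \mathrm{X}(A) \to \widetilde{\mathrm{ev}_A}(\mathrm{X}) \, ; \, \Psi_{\mathrm{X}}(x) = (\phi_x,[\mathrm{id}_A]),
\end{align*}
where $\phi_x : M \to \mathrm{X}$ is the unique morphism in $[\mathbf{C}^{\mathrm{op}},\Bbbk\mathbf{Mod}]$ sending $[\mathrm{id}_A]$ to $x$, produced by Yoneda's Lemma. Representability then amounts to checking that $\Phi \circ \Psi = \mathrm{id}$ and that $\Psi$ is natural.

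The relation $\Phi \circ \Psi = \mathrm{id}$ is immediate from the definition of $\Phi$, since
\begin{align*}
\Phi_{\mathrm{X}}(\phi_x,[\mathrm{id}_A]) = \mathrm{ev}_A(\phi_x)([\mathrm{id}_A]) = (\phi_x)_A([\mathrm{id}_A]) = x.
\end{align*}
For naturality, given a morphism $f : \mathrm{X} \to \mathrm{Y}$ I would compare $\widetilde{\mathrm{ev}_A}(f)\,\Psi_{\mathrm{X}}(x) = (f \circ \phi_x,[\mathrm{id}_A])$ with $\Psi_{\mathrm{Y}}(f_A(x)) = (\phi_{f_A(x)},[\mathrm{id}_A])$, so that it suffices to prove $f \circ \phi_x = \phi_{f_A(x)}$. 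Both sides are morphisms $M \to \mathrm{Y}$, and $(f \circ \phi_x)_A([\mathrm{id}_A]) = f_A(x)$, whence they coincide by the uniqueness clause of Yoneda's Lemma.

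The only point requiring genuine care — the analogue of the Yoneda step invoked in Lemma~\ref{Proposition : CLn representable} — is the existence and uniqueness of $\phi_x$, i.e. the linearized Yoneda bijection
\begin{align*}
\mathrm{Hom}_{[\mathbf{C}^{\mathrm{op}},\Bbbk\mathbf{Mod}]}\big(\Bbbk.\mathrm{Hom}_{\mathbf{C}}(-,A),\mathrm{X}\big) \simeq \mathrm{X}(A).
\end{align*}
I would deduce this from freeness of the linearization: a natural transformation $\alpha$ out of $\Bbbk.\mathrm{Hom}_{\mathbf{C}}(-,A)$ is, at each object $c$, a $\Bbbk$-linear map determined by its values on the basis $\mathrm{Hom}_{\mathbf{C}}(c,A)$, and naturality forces $\alpha_c(g) = \mathrm{X}(g)\big(\alpha_A([\mathrm{id}_A])\big)$ for every $g : c \to A$. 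Hence $\alpha$ is determined by $\alpha_A([\mathrm{id}_A]) \in \mathrm{X}(A)$, and conversely every $x \in \mathrm{X}(A)$ defines such an $\alpha$. I expect this freeness-plus-naturality verification to be the only substantive step, everything else being formal bookkeeping parallel to the $\mathrm{Q}_n$ case.
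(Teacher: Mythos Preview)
Your proof is correct and follows essentially the same approach as the paper: define $\Psi_{\mathrm{X}}(x) = (\phi_x,\mathrm{id}_A)$ via Yoneda and check $\Phi \circ \Psi = \mathrm{id}$. You are simply more explicit than the paper, which omits the naturality check and the linearized Yoneda verification you spell out.
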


\begin{proof}
Take as set of models the set with one element $\mathcal{M} := \{\Bbbk.\mathrm{Hom}_{\mathbf{A}}(-,A)\}$. Let us define a natural transformation $\Psi$ from $\mathrm{ev}_A$ to $\widetilde{\mathrm{ev}}_{\mathrm{A}}$ by $\Psi_X(x) = (\phi_x,\mathrm{id}_A)$ where $\phi_x$ is the unique natural transformation from $\mathrm{Hom}_{\mathbf{A}}(-,A)$ to $\mathrm{X}$ satisfying $\phi_x(\mathrm{id}_A) = x$ (\textit{Yoneda's lemma}). By definition we have $\Phi \circ \Psi$ equal to the identity, so the functor $\mathrm{ev}_A$ is representable.
\end{proof}

\subsection{Map and homotopy} Let $\mathbf{A}$ be a category, $\mathbf{Ch^+}$ be the category of chain complexes of $\Bbbk$-modules and chain maps, and $\mathrm{K}$ be a functor from $\mathbf{A}$ to $\mathbf{Ch^+}$. For each object $\mathrm{A} \in \mathbf{A}$, the functor $\mathrm{K}$ determines a complex $\mathrm{K_{\bullet}(A)}$ composed of modules $\mathrm{K}_q(\mathrm{A})$ and differentials $d^q : \mathrm{K}_q(\mathrm{A}) \to \mathrm{K}_{q-1}(\mathrm{A})$ with $d^{q-1}d^q = 0$. The modules $\mathrm{K}_q(\mathrm{A})$ yield a functor $\mathrm{K}_q$ from $\mathbf{A}$ to $\Bbbk\mathbf{Mod}$ and the differentials yield natural transformations $d^q : \mathrm{K}_q \to \mathrm{K}_{q-1}$ with $d^{q-1}d^q = 0$.

\vskip 0.2cm

Let $\mathrm{K}$ and $\mathrm{L}$ be two functors from $\mathbf{A}$ to $\mathbf{Ch^+}$. A \textit{map} $\mathrm{f : K \to L}$ is a family of a natural transformations $\mathrm{f}_q : \mathrm{K}_q \to \mathrm{L}_q$ such that $d_q\mathrm{f}_q = \mathrm{f}_{q-1}d_q$. If $\mathrm{f}_q$ is defined and satisfies this equation only for $q \leq n$, we say that $\mathrm{f}$ is a \textit{map in dimensions $\leq n$}.

\vskip 0.2cm

Let $\mathrm{f,g : K \to L}$ be two maps. A \textit{homotopy} $\mathrm{D}$ from $\mathrm{f}$ to $\mathrm{g}$ is a sequence of natural transformations $\mathrm{D}_q : \mathrm{K}_q \to \mathrm{L}_{q+1}$ satisfying :
\begin{align*}
d^{q+1}\mathrm{D}_q + \mathrm{D}_{q-1}d^q = \mathrm{g}_q - \mathrm{f}_q.
\end{align*}
If the maps $\mathrm{D}_q$ are defined and satisfy this equality only for $q \leq n$, we say that $\mathrm{D}$ is a homotopy in dimensions $\leq n$.

\subsection{Acyclic models theorems} The two fundamental acyclic models theorems are the following. The first theorem concerns extension of morphisms whereas the second theorem concerns extension of homotopies between morphisms.

\begin{theorem}\label{Theorem : acyclic models theorem 1}
Let $\mathrm{K}$ and $\mathrm{L}$ be functors from a category $\mathbf{A}$ to the category $\mathbf{Ch}^+$, and let $\mathrm{f} : \mathrm{K} \to \mathrm{L}$ be a map in dimensions $< q$. If $\mathrm{K}_q$ is representable and if $\mathrm{H}_{q-1}\big(\mathrm{L}(M)\big) = 0$ for each model $M \in \mathcal{M}$, then $\mathrm{f}$ admits an extension to a map $\mathrm{K} \to \mathrm{L}$ in dimension $\leq q$. 
\end{theorem}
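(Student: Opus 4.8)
The plan is to extend $\mathrm{f}$ by constructing the single natural transformation $\mathrm{f}_q : \mathrm{K}_q \to \mathrm{L}_q$ satisfying $d^q \mathrm{f}_q = \mathrm{f}_{q-1} d^q$. Since $\mathrm{K}_q$ is representable there is a natural transformation $\Psi : \mathrm{K}_q \to \widetilde{\mathrm{K}}_q$ with $\Phi \circ \Psi = \mathrm{id}$, so it suffices to produce a natural transformation $\widetilde{\mathrm{f}}_q : \widetilde{\mathrm{K}}_q \to \mathrm{L}_q$ and then set $\mathrm{f}_q := \widetilde{\mathrm{f}}_q \circ \Psi$. Because $\widetilde{\mathrm{K}}_q(\mathrm{A})$ is free on the pairs $(\phi,m)$ with $\phi : M \to \mathrm{A}$ and $m \in \mathrm{K}_q(M)$, and since $\widetilde{\mathrm{K}}_q(\phi)(\mathrm{id}_M,m) = (\phi,m)$, naturality shows that a transformation out of $\widetilde{\mathrm{K}}_q$ amounts to a choice, for every model $M$ and every $m \in \mathrm{K}_q(M)$, of an element $\mathrm{f}_q(m) := \widetilde{\mathrm{f}}_q(\mathrm{id}_M,m) \in \mathrm{L}_q(M)$; the value on a general generator is then forced to be $\widetilde{\mathrm{f}}_q(\phi,m) = \mathrm{L}_q(\phi)\big(\mathrm{f}_q(m)\big)$.

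First I would make the choice on models. Fix $M \in \mathcal{M}$ and $m \in \mathrm{K}_q(M)$. The element $\mathrm{f}_{q-1}(d^q m) \in \mathrm{L}_{q-1}(M)$ is a cycle: using that $\mathrm{f}$ is a chain map in dimensions $< q$ together with $d^{q-1}d^q = 0$ one gets $d^{q-1}\mathrm{f}_{q-1}(d^q m) = \mathrm{f}_{q-2}(d^{q-1}d^q m) = 0$ (for $q=1$ this holds automatically since $\mathrm{L}$ is non-negatively graded). By hypothesis $\mathrm{H}_{q-1}\big(\mathrm{L}(M)\big) = 0$, so this cycle is a boundary and I may choose $\mathrm{f}_q(m) \in \mathrm{L}_q(M)$ with $d^q \mathrm{f}_q(m) = \mathrm{f}_{q-1}(d^q m)$. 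Making such a choice for every $M$ and every $m$ defines $\widetilde{\mathrm{f}}_q$ by the formula above; its naturality is immediate from $\widetilde{\mathrm{K}}_q(\psi)(\phi,m) = (\psi\phi,m)$ and the functoriality of $\mathrm{L}_q$.

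It remains to verify that $\mathrm{f}_q := \widetilde{\mathrm{f}}_q \circ \Psi$ satisfies $d^q \mathrm{f}_q = \mathrm{f}_{q-1} d^q$, and this is the step I expect to be the main obstacle, precisely because $\Psi$ is only a section of $\Phi$ rather than an isomorphism. The device is to check the identity one level up, on $\widetilde{\mathrm{K}}_q$. Evaluating on a generator $(\phi,m)$ and using naturality of the differential of $\mathrm{L}$ together with the defining property of $\mathrm{f}_q(m)$ gives $d^q \widetilde{\mathrm{f}}_q(\phi,m) = \mathrm{L}_{q-1}(\phi)\big(\mathrm{f}_{q-1}(d^q m)\big)$; on the other hand, using $\Phi(\phi,m) = \mathrm{K}_q(\phi)(m)$ with naturality of the differential of $\mathrm{K}$ and of $\mathrm{f}_{q-1}$, one finds $\mathrm{f}_{q-1} d^q \Phi$ applied to $(\phi,m)$ equals $\mathrm{L}_{q-1}(\phi)\big(\mathrm{f}_{q-1}(d^q m)\big)$ as well. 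Hence $d^q \widetilde{\mathrm{f}}_q = \mathrm{f}_{q-1} d^q \Phi$ as natural transformations $\widetilde{\mathrm{K}}_q \to \mathrm{L}_{q-1}$. Precomposing with $\Psi$ and invoking $\Phi \circ \Psi = \mathrm{id}$ yields $d^q \mathrm{f}_q = d^q \widetilde{\mathrm{f}}_q \Psi = \mathrm{f}_{q-1} d^q \Phi \Psi = \mathrm{f}_{q-1} d^q$, which is exactly the required chain map relation in dimension $q$, completing the extension.
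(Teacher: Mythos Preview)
Your proof is correct and is essentially the classical Eilenberg--MacLane argument. Note, however, that the paper does not give its own proof of this theorem: Appendix~A is explicitly a summary of \cite{EilenbergMacLane}, and Theorem~\ref{Theorem : acyclic models theorem 1} is stated there without proof and used as a black box throughout. So there is no ``paper's own proof'' to compare against; your write-up is precisely the standard proof one would find in the cited source, and every step (the use of representability to reduce to models, the acyclicity hypothesis to lift the cycle $\mathrm{f}_{q-1}(d^q m)$, and the verification $d^q\widetilde{\mathrm{f}}_q = \mathrm{f}_{q-1}d^q\Phi$ followed by precomposition with $\Psi$) is carried out correctly.
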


\begin{theorem}\label{Theorem : acyclic models theorem 2}
 Let $\mathrm{K}$ and $\mathrm{L}$ be functors from a category $\mathbf{A}$ to the category $\mathbf{Ch}^+$, let $\mathrm{f,g} : \mathrm{K} \to \mathrm{L}$ be maps, and let $\mathrm{D : f \simeq g}$ be a homotopy in dimensions $< q$. If $\mathrm{K}_q$ is representable and if $\mathrm{H}_{q}\big(\mathrm{L}(M)\big) = 0$ for each model $M \in \mathcal{M}$, then $\mathrm{D}$ admits an extension to a homotopy $\mathrm{f \simeq g}$  in dimension $\leq q$. 
\end{theorem}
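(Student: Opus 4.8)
The plan is to construct the missing natural transformation $\mathrm{D}_q : \mathrm{K}_q \to \mathrm{L}_{q+1}$ directly, in close parallel to the proof of Theorem \ref{Theorem : acyclic models theorem 1}, but trading the hypothesis $\mathrm{H}_{q-1}(\mathrm{L}(M)) = 0$ for $\mathrm{H}_{q}(\mathrm{L}(M)) = 0$ because we are now solving a lifting problem one degree higher. First I would introduce the natural transformation
\begin{align*}
\mathrm{E}_q := (\mathrm{g}_q - \mathrm{f}_q) - \mathrm{D}_{q-1}d^q : \mathrm{K}_q \to \mathrm{L}_q,
\end{align*}
so that producing the desired extension amounts to finding a natural $\mathrm{D}_q$ solving $d^{q+1}\mathrm{D}_q = \mathrm{E}_q$; this is exactly the homotopy identity $d^{q+1}\mathrm{D}_q + \mathrm{D}_{q-1}d^q = \mathrm{g}_q - \mathrm{f}_q$ in dimension $q$.

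The first key step is to check that $\mathrm{E}_q$ takes values in $q$-cycles, that is $d^q\mathrm{E}_q = 0$. Using that $\mathrm{f}$ and $\mathrm{g}$ are maps, so $d^q\mathrm{g}_q = \mathrm{g}_{q-1}d^q$ and likewise for $\mathrm{f}$, together with the homotopy relation in dimension $q-1$ (valid since $q-1 < q$), which rearranges to
\begin{align*}
d^q\mathrm{D}_{q-1} = (\mathrm{g}_{q-1} - \mathrm{f}_{q-1}) - \mathrm{D}_{q-2}d^{q-1},
\end{align*}
one obtains, after applying $d^{q-1}d^q = 0$,
\begin{align*}
d^q\mathrm{E}_q = (\mathrm{g}_{q-1} - \mathrm{f}_{q-1})d^q - (\mathrm{g}_{q-1} - \mathrm{f}_{q-1})d^q = 0.
\end{align*}
This is a pure identity of natural transformations and uses no acyclicity.

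Next I would exploit the two hypotheses. For each model $M \in \mathcal{M}$ and each generator $(\phi,m)$ of $\widetilde{\mathrm{K}}_q$, the element $\mathrm{E}_M(m) \in \mathrm{L}_q(M)$ is a $q$-cycle by the previous step, so the hypothesis $\mathrm{H}_q(\mathrm{L}(M)) = 0$ furnishes some $w_m \in \mathrm{L}_{q+1}(M)$ with $d^{q+1}w_m = \mathrm{E}_M(m)$. I would bundle these choices into a natural transformation $\overline{\mathrm{D}}_q : \widetilde{\mathrm{K}}_q \to \mathrm{L}_{q+1}$ defined on generators by $\overline{\mathrm{D}}_q(\phi,m) = \mathrm{L}_{q+1}(\phi)(w_m)$, naturality being immediate from functoriality of $\mathrm{L}_{q+1}$, and then set $\mathrm{D}_q := \overline{\mathrm{D}}_q \circ \Psi$, where $\Psi : \mathrm{K}_q \to \widetilde{\mathrm{K}}_q$ is the section witnessing representability of $\mathrm{K}_q$. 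To verify $d^{q+1}\mathrm{D}_q = \mathrm{E}_q$ I would first establish the identity $d^{q+1}\overline{\mathrm{D}}_q = \mathrm{E}_q \circ \Phi$ of natural transformations $\widetilde{\mathrm{K}}_q \to \mathrm{L}_q$ by evaluating on a generator $(\phi,m)$: the left-hand side gives $\mathrm{L}_q(\phi)(d^{q+1}w_m) = \mathrm{L}_q(\phi)(\mathrm{E}_M(m))$, while the right-hand side gives $\mathrm{E}_q(\mathrm{K}_q(\phi)(m)) = \mathrm{L}_q(\phi)(\mathrm{E}_M(m))$ by naturality of $\mathrm{E}_q$. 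Composing with $\Psi$ and invoking $\Phi \circ \Psi = \mathrm{id}$ then yields $d^{q+1}\mathrm{D}_q = \mathrm{E}_q$, which is precisely the homotopy relation in dimension $q$.

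The main obstacle is conceptual rather than computational: the lift $w_m$ exists only on the models, where acyclicity is assumed, and there is no reason a single coherent lift exists over all objects of $\mathbf{A}$. The role of representability is exactly to promote the pointwise model-level choices to a genuine natural transformation on the whole functor $\mathrm{K}_q$, and the delicate point is that this promotion still satisfies $d^{q+1}\mathrm{D}_q = \mathrm{E}_q$ \emph{everywhere}, not merely on models; that hinges entirely on the splitting $\Phi \circ \Psi = \mathrm{id}$. The cycle computation $d^q\mathrm{E}_q = 0$ is the only place the chain-map property of $\mathrm{f},\mathrm{g}$ and the lower-dimensional homotopy enter, and it is routine once the differentials are pushed through; the care is entirely in keeping every map natural, so that agreement on free generators forces agreement everywhere.
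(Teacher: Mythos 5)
Your proof is correct and follows essentially the same route as the source: the paper states Theorem \ref{Theorem : acyclic models theorem 2} without proof, since Appendix A is an announced summary of \cite{EilenbergMacLane}, and your argument --- reducing the extension problem to solving $d^{q+1}\mathrm{D}_q = \mathrm{E}_q$ for $\mathrm{E}_q := (\mathrm{g}_q - \mathrm{f}_q) - \mathrm{D}_{q-1}d^q$, verifying $d^q\mathrm{E}_q = 0$ via the chain-map property of $\mathrm{f},\mathrm{g}$ and the homotopy relation in dimension $q-1$, choosing lifts $w_m$ on the models using $\mathrm{H}_q\big(\mathrm{L}(M)\big) = 0$, and transporting them to all of $\mathrm{K}_q$ through $\mathrm{D}_q := \overline{\mathrm{D}}_q \circ \Psi$ with $\Phi \circ \Psi = \mathrm{id}$ --- is precisely the standard Eilenberg--MacLane proof. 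There are no gaps; in particular you correctly invoke the given homotopy only in dimensions $< q$ and handle naturality at every stage, which is exactly where representability is needed.
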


\section{Simplicial and Cubical sets} \label{Appendix : Simplicial and cubical sets} This appendix is a reminder of basics of simplicial and cubical sets theory. It is based on Appendix B of \cite{LodayCyclic} for the simplicial set theory. 

\subsection{The category $\Delta$} Let $\Delta$ be the small category with set of objects $\{\Delta_n := \{0,\dots,n\}\}_{n \in \mathbb{N}}$, and set of morphisms the set of non decreasing maps. Morphisms in $\Delta$ are generated by the two families of maps $\{\delta_i : \Delta_{n-1} \to \Delta_n\}_{0 \leq i \leq n}$ and $\{\sigma_i : \Delta_{n} \to \Delta_{n-1}\}_{0 \leq i \leq n}$ defined by
\begin{align*}
\delta_i(j) := \left\{
\begin{array}{ll}
j & \text{if } j < i, \\
j+1 & \text{if } j \geq i.
\end{array}
\right.
\, \text{ and } \,
\sigma_i(j) := \left\{
\begin{array}{ll}
j & \text{if } j \leq i, \\
j-1 & \text{if } j > i.
\end{array}
\right.
\end{align*}
These two families of maps satisfy relations called \textit{cosimplicial identities}:
\begin{align*}
\delta_i \delta_j &= \delta_{j+1} \delta_i \quad \forall i \leq j,\\
\sigma_i\sigma_j &= \sigma_{j-1}\sigma_i \quad \forall i < j,\\
\sigma_i\delta_j &= \left\{
\begin{array}{ll}
\delta_{j-1}\sigma_i & \text{if } i < j ,\\
\mathrm{id} & \text{if } i = j , \\
\delta_j,\sigma_{i-1} 	& \text{if } i > j.
\end{array}
\right.
\end{align*}

\subsection{Simplicial sets} A \textit{simplicial set} is a functor $\mathrm{X}$ from $\Delta^{\mathrm{op}}$ to $\mathbf{Set}$. Equivalently, a simplicial set is a family of sets $\{\mathrm{X}_n\}_{n \in \mathbb{N}}$ with two families of maps $\{d_{i} : \mathrm{X}_n \to \mathrm{X}_{n-1}\}_{0 \leq i \leq n}$ and $\{s_i : \mathrm{X}_{n-1} \to \mathrm{X}_n\}_{0 \leq i \leq n}$ satisfying relations called \textit{simplicial identities} :
\begin{align*}
 d_{j} d_{i} &= d_{i} d_{j+1} \quad \forall i \leq j,
 %\label{cubical identities 1}
 \\
 s_j s_i &= s_{i-1} s_{j} \quad \forall i < j,
 %\label{cocubical identities 2}
 \\
 d_{j} s_i&=
\left\{\begin{array}{ll}
 s_i d_{j-1}& \text{if } i < j,\\
\mathrm{id} & \text{if } i = j,\\
 s_{i-1} d_{j}& \text{if } i > j.
\end{array}\right. 
%\label{cocubical identities 3}
\end{align*}
\par
Given two simplicial sets $\mathrm{X}$ and $\mathrm{Y}$, a \textit{morphism of simplicial sets} from $\mathrm{X}$ to $\mathrm{Y}$ is a natural transformation $\mathrm{t} : \mathrm{X} \to \mathrm{Y}$. This is equivalent to a family of set theoretical maps $\{\mathrm{t}_n : \mathrm{X}_n \to \mathrm{Y}_n\}_{n \in \mathbb{N}}$ satisfying the following commutativity relations :
\begin{align*}
\mathrm{t}_{n-1}d^{\mathrm{X}}_{i} &= d^{\mathrm{Y}}_{i}\mathrm{t_n},\\ 
\mathrm{t}_{n}s^{\mathrm{X}}_i &= s^{\mathrm{Y}}_{i}\mathrm{t_{n-1}}.
\end{align*} 
The category of simplicial sets is denoted by $\mathbf{sSet}$. 
\vskip 0.2cm
By duality a \textit{cosimplicial set} is a functor $\mathrm{X}$ from $\Delta$ to $\mathbf{Set}$. This is equivalent to the data of a family of sets $\{X_n\}_{n \in \mathbb{N}}$ with two families of maps $\{d^{i} : \mathrm{X}_{n-1} \to \mathrm{X}_{n}\}_{0 \leq i \leq n}$ and $\{s^i : \mathrm{X}_{n} \to \mathrm{X}_{n-1}\}_{0 \leq i \leq n}$ satisfying the \textit{cosimplicial identities}.

\begin{example} $\Delta^n := \mathrm{Hom}_{\Delta}(-,\Delta_n) : \Delta^{\mathrm{op}} \to \mathbf{Set}$ for all $n \in \mathbb{N}$ is a simplicial set.
\end{example}

\begin{example} Let us denote by $|\Delta^n|$ the "classical" $n$-simplex in $\mathbb{R}^{n+1}$, \textit{i.e.} the convex hull of the points $(0,\dots,1,\dots,0) \in \mathbb{R}^{n+1}$. We define a cosimplicial topological space $|\Delta| : \Delta \to \mathbf{Top}$ by $|\Delta|_n := |\Delta^n|$, $d^i(t_1,\dots,t_{n}) = (t_1,\dots,t_{i-1},0,t_i,\dots,t_n)$ and $s^i(t_1,\dots,t_n) = (t_1,\dots,\widehat{t_1},\dots,t_n)$.
\end{example}

\begin{example} \label{Example : Singular functor} Let $X$ be a topological space. The simplicial set $\mathrm{Sing}(X)$ is defined by the composition $\mathrm{Sing}(\mathrm{X}) := \mathrm{Hom}_{\mathbf{Top}}(-,\mathrm{X}) \circ |\Delta|$. This construction defines a functor $\mathrm{Sing} : \mathbf{Top} \to \mathbf{sSet}$.
\end{example}

\subsection{Simplicial geometric realization} The \textit{geometric realization $|\mathrm{X}|$} of a simplicial set $\mathrm{X}$ is the coend of the functor $\mathrm{X} \times |\Delta| : \Delta^{\mathrm{op}} \times \Delta \to \mathbf{Top}$ defined on objects by $(\mathrm{X} \times |\Delta|)(\Delta_m,\Delta_n) := \mathrm{X}_m \times |\Delta^n|$.
\begin{align*}
|\mathrm{X}| := \int^n \mathrm{X}_n \times |\Delta^n|
\end{align*}
The topological space $|\mathrm{X}|$ is the quotient of the topological space $\coprod_{n \in \mathbb{N}} \mathrm{X}_n \times |\Delta^n|$ ($\mathrm{X}_n$ provided with the discrete topology) by the equivalence relation 
\begin{align*}
\big(\mathrm{X}(f)x,\epsilon\big) \simeq \big(x,|\Delta|(f)(\epsilon)\big).
\end{align*}
This construction induces a functor $|-| : \mathbf{sSet} \to \mathbf{Top}$ left adjoint to the functor $\mathrm{Sing}: \mathbf{Top} \to \mathbf{sSet}$ defined in Example \ref{Example : Singular functor}.
\subsection{Homology of simplicial sets} Let $\mathrm{X}$ be a simplicial set. For all $n \in \mathbb{N}$ let us denote by $\mathrm{Q}_{n}(\mathrm{X})$ the free module over $\Bbbk$ generated by $\mathrm{X}_n$, and by $\mathrm{C}_{n}(\mathrm{X})$ the quotient of $\mathrm{Q}_{n}(\mathrm{X})$ by the subspace generated by the images of the degeneracies $\mathrm{D}_n(\mathrm{X}) := \Bbbk.\{\mathrm{Im}(s_i) \, | \, i = 0,\dots,n\}$. These constructions are natural in $\mathrm{X}$ and so induce functors
\begin{align*}
\mathrm{Q}_{n} : \mathbf{sSet} \to \mathbf{\Bbbk Mod} \, \text{ and } \, \mathrm{C}_{n} : \mathbf{sSet} \to \mathbf{\Bbbk Mod}. 
\end{align*} 

\begin{lemma} \label{lemma : Cndelta is representable} 
For all $n \in \mathbb{N}$ the functor $\mathrm{Q}_n : \mathbf{sSet} \to \Bbbk \mathbf{Mod}$ is representable (by the set of models $\{\Bbbk.\Delta^n\}$).
\end{lemma}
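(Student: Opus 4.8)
The plan is to reproduce, in the simplicial setting, the Yoneda argument already carried out for cubical $\mathbf{L}$-sets in the first part of Lemma \ref{Proposition : CLn representable}; the essential input is Yoneda's lemma for the representable simplicial set $\Delta^n = \mathrm{Hom}_{\Delta}(-,\Delta_n)$. First I would fix $n \in \mathbb{N}$ and take as set of models the singleton $\mathcal{M} = \{\Bbbk.\Delta^n\}$. Observing that $\mathrm{Q}_n$ factors as the composite $\Bbbk. \circ \mathrm{ev}_{\Delta_n}$ of the evaluation functor at $\Delta_n$ on $\mathbf{sSet} = [\Delta^{\mathrm{op}},\mathbf{Set}]$ with the free-module functor $\Bbbk. : \mathbf{Set} \to \Bbbk\mathbf{Mod}$, the whole proof becomes a Yoneda argument entirely parallel to the representability of the evaluation functor recalled in \ref{Appendix : Representable functor}.

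Following the definition of representability, I must exhibit a natural transformation $\Psi : \mathrm{Q}_n \to \widetilde{\mathrm{Q}}_n$ splitting the canonical $\Phi : \widetilde{\mathrm{Q}}_n \to \mathrm{Q}_n$. I would define it on a generator $x \in \mathrm{X}_n$ by $\Psi_{\mathrm{X}}(x) := (\phi_x,\mathrm{id}_{\Delta_n})$, where $\phi_x : \Delta^n \to \mathrm{X}$ is the unique morphism of simplicial sets with $(\phi_x)_n(\mathrm{id}_{\Delta_n}) = x$; its existence and uniqueness are precisely Yoneda's lemma. The relation $\Phi \circ \Psi = \mathrm{id}$ is then immediate by unwinding the definition $\Phi_{\mathrm{X}}(\phi,m) = \mathrm{Q}_n(\phi)(m)$:
\begin{align*}
\Phi_{\mathrm{X}}\big(\Psi_{\mathrm{X}}(x)\big) = \mathrm{Q}_n(\phi_x)(\mathrm{id}_{\Delta_n}) = (\phi_x)_n(\mathrm{id}_{\Delta_n}) = x.
\end{align*}

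The remaining point is the naturality of $\Psi$, which I expect to follow from the naturality of the Yoneda bijection $\mathrm{X}_n \simeq \mathrm{Hom}_{\mathbf{sSet}}(\Delta^n,\mathrm{X})$ in $\mathrm{X}$: for a morphism $\mathrm{t} : \mathrm{X} \to \mathrm{Y}$ one has $\phi_{\mathrm{t}_n(x)} = \mathrm{t} \circ \phi_x$, whence $\widetilde{\mathrm{Q}}_n(\mathrm{t}) \circ \Psi_{\mathrm{X}} = \Psi_{\mathrm{Y}} \circ \mathrm{Q}_n(\mathrm{t})$. I do not anticipate any genuine obstacle: the statement is a formal consequence of Yoneda and is the verbatim simplicial analogue of the cubical computation. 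The only steps requiring a little care are the bookkeeping identification of the distinguished model element as $\mathrm{id}_{\Delta_n} \in (\Delta^n)_n$ and the compatibility of the free-module functor $\Bbbk.(-)$ with the auxiliary construction $\widetilde{(-)}$.
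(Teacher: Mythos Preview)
Your proposal is correct and follows essentially the same approach as the paper: fix $n$, take $\mathcal{M}=\{\Bbbk.\Delta^n\}$, write $\mathrm{Q}_n = \Bbbk.\circ\mathrm{ev}_{\Delta_n}$, and define $\Psi_{\mathrm{X}}(x)=(\phi_x,\mathrm{id}_{\Delta_n})$ via Yoneda so that $\Phi\circ\Psi=\mathrm{id}$. Your write-up is in fact slightly more careful than the paper's, since you explicitly verify $\Phi\circ\Psi=\mathrm{id}$ and check naturality of $\Psi$.
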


\begin{proof}
Let $n \in \mathbb{N}$ be fixed and take as set of models $\mathcal{M}$ the set with one element $\{\Bbbk.\Delta^n\}$. By definition the functor $\mathrm{Q}_n$ is the composition of the functor $\Bbbk. : \mathbf{Set} \to \Bbbk\mathbf{Mod}$ and the evaluation functor $\mathrm{ev}_{\Delta_n} : \mathbf{cSet} \to \mathbf{Set}$
\begin{align*}
\mathrm{Q}_n := \Bbbk. \circ \mathrm{ev}_{\square_n}. 
\end{align*}
Let us define a natural transformation $\Psi$ from $\mathrm{Q}_n$ to $\mathrm{\widetilde{Q}}_n$ by the formula
\begin{align*}
\Psi_{\mathrm{X}} : \mathrm{Q}_n(\mathrm{X}) \to \mathrm{\widetilde{Q}}_n(\mathrm{X}) \, ; \, \Psi_{\mathrm{X}}(x) = (\phi_x,\mathrm{id}_{\Delta_n}),
\end{align*}
where $\phi_x$ is the unique natural transformation from $\square^n$ to $\mathrm{X}$ such that $(\phi_x)_n(\mathrm{id}_{\Delta_n}) = x$ (Yoneda's Lemma). We have $\Phi\circ \Psi = \mathrm{id}$ so $\mathrm{Q}_n$ is representable.
\end{proof}

Thanks to the simplicial identities the degree $-1$ graded map $ d := \sum_{i=0}^n (-1)^{i} d_{i}$ satisfies the equation $d^2 = 0$. This constructions is natural in $\mathrm{X}$ and so induces a functor
\begin{align*}
\mathrm{Q}_{\bullet} : \mathbf{cSet} \to \mathbf{Ch}^+ 
\end{align*} 
\par
The \textit{(simplicial) homology of} $\mathrm{X}$, denoted $\mathrm{H_{\bullet}}(\mathrm{X})$, is the homology of the chain complex $\mathrm{Q}_{\bullet}(\mathrm{X})$.

\begin{lemma} \label{lemma : deltan is contractible}
For all $n \in \mathbb{N}$, $\mathrm{H}_{p}(\Delta^n,\Bbbk) =
\left\{\begin{array}{ll}
\Bbbk & \text{if } p = 0,\\
0 & \text{if } p >0 .
\end{array}\right. 
$.
\end{lemma}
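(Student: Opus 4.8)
The plan is to exhibit an explicit contracting homotopy on the (unnormalized) chain complex $\mathrm{Q}_\bullet(\Delta^n)$, showing that it is acyclic except in degree $0$, where its homology is $\Bbbk$. Conceptually this reflects the fact that $\Delta^n = \mathrm{Hom}_\Delta(-,\Delta_n)$ is the nerve of the totally ordered set $\{0 < 1 < \cdots < n\}$, a category with an initial object, so that its realization is contractible; but I would prefer to make the contraction combinatorially explicit so the argument stays internal to the complex $\mathrm{Q}_\bullet$ defined above.

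Concretely, recall that an element of $\Delta^n_m = \mathrm{Hom}_\Delta(\Delta_m,\Delta_n)$ is a nondecreasing map $f : \{0,\dots,m\} \to \{0,\dots,n\}$, and that the face operator $d_i$ is precomposition with $\delta_i$, i.e. $d_i(f) = f \circ \delta_i$. First I would define a degree $+1$ map $h : \mathrm{Q}_m(\Delta^n) \to \mathrm{Q}_{m+1}(\Delta^n)$ on generators by ``prepending the value $0$'':
\[
h(f)(k) :=
\begin{cases}
0 & k = 0, \\
f(k-1) & 1 \le k \le m+1.
\end{cases}
\]
Since $f$ is nondecreasing with values $\geq 0$, the map $h(f)$ is again nondecreasing, so $h$ is well defined and extends linearly.

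Next I would verify the face identities, which follow by directly unwinding the definitions of $\delta_i$ and of $h$:
\[
d_0\, h = \mathrm{id}, \qquad d_i\, h = h\, d_{i-1} \quad (1 \le i \le m+1).
\]
Granting these, a telescoping computation with the differential $d = \sum_i (-1)^i d_i$ gives, in each degree $m \geq 1$,
\[
d\,h + h\,d = d_0 h + \sum_{i \geq 1}(-1)^i h\, d_{i-1} + h\,d = \mathrm{id} - h\,d + h\,d = \mathrm{id},
\]
whence $\mathrm{H}_p(\Delta^n,\Bbbk) = 0$ for all $p > 0$. In degree $0$ there is no $d_0$ leaving $\mathrm{Q}_0$, and the identity $d_1 h = (\text{constant map at } 0)$ takes the place of $d_i h = h d_{i-1}$; this exhibits $h$ as a contraction of the augmented complex $\cdots \to \mathrm{Q}_1 \to \mathrm{Q}_0 \to \Bbbk \to 0$, forcing $\mathrm{H}_0(\Delta^n,\Bbbk) = \Bbbk$, generated by the class of any single vertex.

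The only delicate points are the sign bookkeeping in the telescoping sum and the separate treatment of degree $0$, where one must work with the augmented complex rather than apply the homotopy formula blindly; both are routine. As an alternative entirely in the spirit of Lemma \ref{Computation of H(Ln)}, one could instead observe that the geometric realization $|\Delta^n|$ is homeomorphic to the convex standard topological $n$-simplex, hence contractible, and invoke the isomorphism between the simplicial homology of $\Delta^n$ and the singular homology of $|\Delta^n|$.
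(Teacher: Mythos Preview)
Your proof is correct. The explicit contracting homotopy $h$ (prepending the vertex $0$) is the standard cone construction; the face identities $d_0 h = \mathrm{id}$ and $d_i h = h\, d_{i-1}$ for $i \geq 1$ are verified correctly, the telescoping computation is fine, and your treatment of degree $0$ via the augmented complex is sound.

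The paper, however, takes precisely the alternative you mention in your final sentence: it simply invokes the isomorphism between the simplicial homology of $\Delta^n$ and the singular homology of its geometric realization $|\Delta^n|$, and notes that the latter is the contractible topological $n$-simplex. So your main argument is genuinely different from the paper's. Your combinatorial route has the advantage of being entirely internal to the chain complex $\mathrm{Q}_\bullet$, avoiding any appeal to the comparison between simplicial and singular homology; the paper's route is a one-line argument but imports that comparison as a black box. Both are standard, and in the context of the paper (which freely uses geometric realization elsewhere, e.g.\ in Lemma~\ref{Computation of H(Ln)} and Lemma~\ref{lemma : squaren is contractible}) the topological shortcut is consistent with the surrounding style.
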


\begin{proof}
Let $n \in \mathbb{N}$ be fixed. The simplicial homology of $\Delta^n$ is isomorphic to the singular homology of its geometric realization $|\Delta^n|$. The topological space $|\Delta^n|$ is contractible so $\mathrm{H}_{\bullet}(\Delta^n,\Bbbk) =  0$.
\end{proof}

\subsection{The category $\square$} Let $\square$ be the small category with set of objects the sets $\{\square_n := \{0,1\}^{n}\}_{n \in \mathbb{N}}$, and set of morphisms the set of maps generated by the two families $\{\delta_{i,\epsilon} : \square_{n-1} \to \square_n\}_{\epsilon \in \{0,1\},1 \leq i \leq n}$ and $\{\sigma_i : \square_n \to \square_{n-1}\}_{1 \leq i \leq n}$ where
\begin{align*}
\delta_{i,\epsilon}(\epsilon_1,\dots,\epsilon_{n-1}) &:= (\epsilon_1,\dots,\epsilon_{i-1},\epsilon,\epsilon_i,\dots,\epsilon_{n-1}),\\
\sigma_i(\epsilon_1,\dots,\epsilon_n) &:= (\epsilon_1,\dots,\epsilon_{i-1},\epsilon_{i+1},\dots,\epsilon_n). 
\end{align*}
These two families of maps satisfy relations called \textit{cocubical identities} :
\begin{align*}
 \delta_{i,\epsilon} \delta_{j,\omega} &= \delta_{j+1,\omega} \delta_{i,\epsilon} \quad \forall i \leq j,
 %\label{cocubical identities 1}
 \\
 \sigma_i \sigma_j &= \sigma_{j-1} \sigma_{i} \quad \forall i < j,
 %\label{cocubical identities 2}
 \\
 \sigma_i \delta_{j,\epsilon} &=
\left\{\begin{array}{ll}
\delta_{j-1,\epsilon} \sigma_i & \text{if } i < j,\\
\mathrm{id} & \text{if } i = j,\\
\delta_{j,\epsilon} \sigma_{i-1} & \text{if } i > j.
\end{array}\right. 
%\label{cocubical identities 3}
\end{align*}
Therefore each map $f \in \mathrm{Hom}_{\square}(\square_m,\square_n)$ can be rewritten in a unique way :
\begin{align*}
f = \sigma_{j_1}\cdots\sigma_{j_q}\delta_{i_p,\epsilon_p}\cdots\delta_{i_1,\epsilon_1},
\end{align*}
with $p+q = n-m, \, i_1 < \dots < i_p$ and $j_1 < \dots < j_q$.

\subsection{Cubical sets} \label{Definition : cubical sets} A \textit{cubical set} is a functor $\mathrm{X}$ from $\square^{\mathrm{op}}$ to $\mathbf{Set}$. Equivalently, a cubical set is a family of sets $\{\mathrm{X}_n\}_{n \in \mathbb{N}}$ with two families of maps $\{d_{i,\epsilon} : \mathrm{X}_n \to \mathrm{X}_{n-1}\}_{\epsilon \in \{0,1\}, 1 \leq i \leq n}$ and $\{s_i : \mathrm{X}_{n-1} \to \mathrm{X}_n\}_{1 \leq i \leq n}$ satisfying relations called \textit{cubical identities} :
\begin{align*}
 d_{j,\omega} d_{i,\epsilon} &= d_{i,\epsilon} d_{j+1,\omega} \quad \forall i \leq j,
 %\label{cubical identities 1}
 \\
 s_j s_i &= s_{i-1} s_{j} \quad \forall i < j,
 %\label{cocubical identities 2}
 \\
 d_{j,\epsilon} s_i&=
\left\{\begin{array}{ll}
 s_i d_{j-1,\epsilon}& \text{if } i < j,\\
\mathrm{id} & \text{if } i = j,\\
 s_{i-1} d_{j,\epsilon}& \text{if } i > j.
\end{array}\right. 
%\label{cocubical identities 3}
\end{align*}
\par
Given two cubical sets $\mathrm{X}$ and $\mathrm{Y}$, a \textit{morphism of cubical sets} from $\mathrm{X}$ to $\mathrm{Y}$ is a natural transformation $\mathrm{t} : \mathrm{X} \to \mathrm{Y}$. This is equivalent to a family of set theoretical maps $\{\mathrm{t}_n : \mathrm{X}_n \to \mathrm{Y}_n\}_{n \in \mathbb{N}}$ satisfying the following commutativity relations :
\begin{align*}
\mathrm{t}_{n-1}d^{\mathrm{X}}_{i,\epsilon} &= d^{\mathrm{Y}}_{i,\epsilon}\mathrm{t_n},\\ 
\mathrm{t}_{n}s^{\mathrm{X}}_i &= s^{\mathrm{Y}}_{i}\mathrm{t_{n-1}}.
\end{align*} 
The category of cubical sets is denoted by $\mathbf{cSet}$.
\vskip 0.2cm
By duality a \textit{cocubical set} is a functor $\mathrm{X}$ from $\square$ to $\mathbf{Set}$. This is equivalent to the data of a family of sets $\{X_n\}_{n \in \mathbb{N}}$ with two families of maps $\{d^{i,\epsilon} : \mathrm{X}_{n-1} \to \mathrm{X}_{n}\}_{\epsilon \in \{0,1\},1 \leq i \leq n}$ and $\{s^i : \mathrm{X}_{n} \to \mathrm{X}_{n-1}\}_{1 \leq i \leq n}$ satisfying the \textit{cocubical identities}.
%\par
%Given two morphisms of cubical sets $\mathrm{f}_0$ and $\mathrm{f}_1$ from $\mathrm{X}$ to $\mathrm{Y}$, an \textit{homotopy} from $\mathrm{f}_0$ to $\mathrm{f}_1$ is a natural transformation $\mathrm{h}$ from $\square^1 \times \mathrm{X}$ to $\mathrm{Y}$ such that the following diagram commutes
%\begin{align*}
%\xymatrix{
%\mathrm{X} \ar[r]^{\mathrm{i}_0 \quad } \ar[rd]_{\mathrm{f}_0} & \square^1 \times \mathrm{X} \ar[d]^{\mathrm{h}} & \mathrm{X} \ar[l]_{\quad \mathrm{i}_1} \ar[ld]^{\mathrm{f}_1}
%\\
%& \mathrm{Y} &
%}
%\end{align*} 
%where $\square^1 := \mathrm{Hom}_{\square}(-,\square_1)$ and $(\mathrm{i_\epsilon})_n(x) = (,x)$ for $\epsilon \in \{0,1\}$.
\begin{example} $\square^n := \mathrm{Hom}_{\square}(-,\square_n) : \square^{\mathrm{op}} \to \mathbf{Set}$ for all $n \in \mathbb{N}$.
\end{example}

\begin{example} Let us denote by $|\square^n|$ the "classical" $n$-cube in $\mathbb{R}^{n}$, \textit{i.e.} the topological space $[0,1]^n$. We define a cocubical topological space $|\square| : \square \to \mathbf{Top}$ by $|\square|_n := |\square^n|$, $d^{i,\epsilon}(t_1,\dots,t_{n}) = (t_1,\dots,t_{i-1},\epsilon,t_i,\dots,t_n)$ and $s^i(t_1,\dots,t_n) = (t_1,\dots,\widehat{t_1},\dots,t_n)$.
\end{example}

\begin{example} \label{Example : Cubical singular functor} Let $X$ be a topological space. The cubical set $\mathrm{Cub}(X)$ is defined by the composition $\mathrm{Cub}(X)_n := \mathrm{Hom}_{\mathbf{Top}}(-,X) \circ |\square|$. This construction defines a functor $\mathrm{Cub} : \mathbf{Top} \to \mathbf{cSet}$.
\end{example}

\subsection{Cubical geometric realization} The \textit{geometric realization $|\mathrm{X}|$} of a cubical set $\mathrm{X}$ is the coend of the functor $\mathrm{X} \times |\square| : \square^{\mathrm{op}} \times \square \to \mathbf{Top}$ defined on objects by $(\mathrm{X} \times |\square|)(\square_m,\square_n) := \mathrm{X}_m \times |\square^n|$.
\begin{align*}
|\mathrm{X}| := \int^n \mathrm{X}_n \times |\square^n|
\end{align*}
The topological space $|\mathrm{X}|$ is the quotient of the topological space $\coprod_{n \in \mathbb{N}} \mathrm{X}_n \times |\square^n|$ ($\mathrm{X}_n$ provided with the discrete topology) by the equivalence relation 
\begin{align*}
\big(\mathrm{X}(f)x,\epsilon\big) \simeq \big(x,|\square|(f)(\epsilon)\big).
\end{align*}
This construction induces a functor $|-| : \mathbf{cSet} \to \mathbf{Top}$ left adjoint to the functor $\mathrm{Cub}: \mathbf{Top} \to \mathbf{cSet}$ defined in Example \ref{Example : Cubical singular functor}.

\subsection{Homology of cubical sets} \label{Appendix : Homology of cubical sets} Let $\mathrm{X}$ be a cubical set. For all $n \in \mathbb{N}$ let us denote by $\mathrm{Q}_{n}(\mathrm{X})$ the free module over $\Bbbk$ generated by $\mathrm{X}_n$, and by $\mathrm{C}_{n}(\mathrm{X})$ the quotient of $\mathrm{Q}_{n}(\mathrm{X})$ by the subspace generated by the images of the degeneracies $\mathrm{D}_n(\mathrm{X}) := \Bbbk.\{\mathrm{Im}(s_i) \, | \, i = 1,\dots,n\}$. These constructions are natural in $\mathrm{X}$ and so induce functors
\begin{align*}
\mathrm{Q}_{n} : \mathbf{cSet} \to \mathbf{\Bbbk Mod} \, \text{ and } \, \mathrm{C}_{n} : \mathbf{cSet} \to \mathbf{\Bbbk Mod}. 
\end{align*} 
\begin{proposition} \label{Proposition : Cn representable}
The functors $\mathrm{Q}_n$ and $\mathrm{C}_n$ are representable by $\Bbbk.\square^n$.
\end{proposition}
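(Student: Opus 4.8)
The plan is to mirror, over the whole category $\mathbf{cSet}$, the argument already carried out for $\mathbf{L}$-sets in the proof of Lemma~\ref{Proposition : CLn representable}. Fix $n \in \mathbb{N}$ and take as set of models the singleton $\mathcal{M} := \{\Bbbk.\square^n\}$. The two functors are handled separately: $\mathrm{Q}_n$ directly via Yoneda, and $\mathrm{C}_n$ as a retract of $\mathrm{Q}_n$ using Lemma~\ref{the quotient of a representable is representable}.

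First I would treat $\mathrm{Q}_n$. By definition $\mathrm{Q}_n = \Bbbk.\circ\,\mathrm{ev}_{\square_n}$, the composite of the linearization functor with the evaluation functor at $\square_n$. I define a natural transformation $\Psi$ from $\mathrm{Q}_n$ to $\widetilde{\mathrm{Q}}_n$ by the formula $\Psi_{\mathrm{X}}(x) = (\phi_x,\mathrm{id}_{\square_n})$, where $\phi_x : \square^n \to \mathrm{X}$ is the unique morphism of cubical sets satisfying $(\phi_x)_n(\mathrm{id}_{\square_n}) = x$ provided by Yoneda's lemma. Unwinding the definition of $\Phi$ then gives $\Phi_{\mathrm{X}}\big(\Psi_{\mathrm{X}}(x)\big) = \mathrm{X}(\mathrm{id}_{\square_n})(x) = x$, so $\Phi\circ\Psi = \mathrm{id}$ and $\mathrm{Q}_n$ is representable by $\Bbbk.\square^n$.

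For $\mathrm{C}_n$ I would exhibit it as a retract of the representable functor $\mathrm{Q}_n$ and conclude by Lemma~\ref{the quotient of a representable is representable}. Let $\xi : \mathrm{Q}_n \to \mathrm{C}_n$ be the natural quotient projection, and define $\eta : \mathrm{C}_n \to \mathrm{Q}_n$ by
\[
\eta_{\mathrm{X}} := (\mathrm{id} - s_1 d_{1,0}) \cdots (\mathrm{id} - s_n d_{n,0}).
\]
Using the cubical identity $d_{i,0}s_i = \mathrm{id}$ one checks that each generator $s_i(y)$ of the degenerate subspace $\mathrm{D}_n(\mathrm{X})$ is annihilated by the factor $(\mathrm{id} - s_i d_{i,0})$, hence by the whole product, so $\eta_{\mathrm{X}}$ sends $\mathrm{D}_n(\mathrm{X})$ to $0$ and descends to $\mathrm{C}_n(\mathrm{X})$. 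Expanding the product, every term other than $\mathrm{id}$ contains at least one degeneracy and so lands in $\mathrm{D}_n(\mathrm{X})$; therefore $\xi\circ\eta = \mathrm{id}$, and Lemma~\ref{the quotient of a representable is representable} yields the representability of $\mathrm{C}_n$ by $\Bbbk.\square^n$.

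The routine but slightly delicate point is the bookkeeping with the cubical identities establishing simultaneously that $\eta$ kills $\mathrm{D}_n(\mathrm{X})$ and that $\xi\circ\eta = \mathrm{id}$; this is where essentially all the work sits. It proceeds exactly as in the $\mathbf{L}$-set case, since the normalization operator $\eta$ and the relevant degeneracy relations are identical, the present statement being simply the unrestricted counterpart of Lemma~\ref{Proposition : CLn representable}.
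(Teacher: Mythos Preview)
Your proof is correct and follows the paper's argument essentially verbatim: the same singleton model set $\{\Bbbk.\square^n\}$, the same Yoneda-based $\Psi$ for $\mathrm{Q}_n$, and the same normalization operator $\eta_{\mathrm{X}} = (\mathrm{id}-s_1d_{1,0})\cdots(\mathrm{id}-s_nd_{n,0})$ together with Lemma~\ref{the quotient of a representable is representable} for $\mathrm{C}_n$. Your added justification that $\eta$ kills degenerate elements is slightly glib (one factor killing $s_i(y)$ does not immediately imply the whole ordered product does; one must check via the cubical identities that the intermediate factors keep the element in the image of $s_i$), but the paper is equally terse on this point.
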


\begin{proof}
Let $n \in \mathbb{N}$ be fixed and take as set of models $\mathcal{M}$ the set with one element $\{\Bbbk.\square^n\}$. By definition the functor $\mathrm{Q}_n$ is the composition of the functor $\Bbbk. : \mathbf{Set} \to \Bbbk\mathbf{Mod}$ and the evaluation functor $\mathrm{ev}_{\square_n} : \mathbf{cSet} \to \mathbf{Set}$
\begin{align*}
\mathrm{Q}_n := \Bbbk. \circ \mathrm{ev}_{\square_n}. 
\end{align*}
Let us define a natural transformation $\Psi$ from $\mathrm{Q}_n$ to $\mathrm{\widetilde{Q}}_n$ by the formula
\begin{align*}
\Psi_{\mathrm{X}} : \mathrm{Q}_n(\mathrm{X}) \to \mathrm{\widetilde{Q}}_n(\mathrm{X}) \, ; \, \Psi_{\mathrm{X}}(x) = (\phi_x,\mathrm{id}_{\square_n}),
\end{align*}
where $\phi_x$ is the unique natural transformation from $\square^n$ to $\mathrm{X}$ such that $(\phi_x)_n(\mathrm{id}_{\square_n}) = x$ (Yoneda's Lemma). We have $\Phi\circ \Psi = \mathrm{id}$ so $\mathrm{Q}_n$ is representable.
\vskip 0.2cm
The representability of $\mathrm{C}_n$ is a consequence of Lemma \ref{the quotient of a representable is representable}. Indeed, let $\xi : \mathrm{Q}_n \to \mathrm{C}_n$ be the natural transformation defined by passing to the quotient, and let $\eta : \mathrm{C}_n \to \mathrm{Q}_n$ be the natural transformation defined by 
\begin{align*}
\eta_{\mathrm{X}} := (\mathrm{id} - s_1 d_{1,0}) \cdots  (\mathrm{id} - s_n d_{n,0}). 
\end{align*} 
Thanks to the cubical identities this map sends $\mathrm{D}_n(\mathrm{X})$ to $0$ and so is well defined. Moreover $\xi \circ \eta = \mathrm{id}$ then by Lemma \ref{the quotient of a representable is representable} the functor $\mathrm{C}_n$ is representable. 
\end{proof}

Thanks to the cubical identities the degree $-1$ graded map $ d := \sum_{i=1}^n (-1)^{i+1} (d_{i,1}-d_{i,0})$ satisfies the equation $d^2 = 0$ and sends $\mathrm{D}_n(\mathrm{X})$ into $\mathrm{D}_{n-1}(\mathrm{X})$. These constructions are natural in $\mathrm{X}$ and so induce functors
\begin{align*}
\mathrm{Q}_{\bullet} : \mathbf{cSet} \to \mathbf{Ch}^+ \, \text{ and } \, \mathrm{C}_{\bullet} : \mathbf{cSet} \to \mathbf{Ch}^+. 
\end{align*} 
\par
The \textit{unormalized (cubical) homology of} $\mathrm{X}$, denoted $\mathrm{H^{u}_{\bullet}}(\mathrm{X})$, is the homology of the chain complex $\mathrm{Q}_{\bullet}(\mathrm{X})$. The \textit{(cubical) homology of} $\mathrm{X}$, denoted $\mathrm{H_{\bullet}}(\mathrm{X})$, is the homology of the chain complex $\mathrm{C}_{\bullet}(\mathrm{X})$.  

\begin{lemma} \label{lemma : squaren is contractible}
For all $n \in \mathbb{N}$, $\mathrm{H}_{p}(\square^n,\Bbbk) =
\left\{\begin{array}{ll}
\Bbbk & \text{if } p = 0,\\
0 & \text{if } p >0 .
\end{array}\right. 
$.
\end{lemma}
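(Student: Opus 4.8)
The plan is to follow the proof of Lemma \ref{lemma : deltan is contractible} in spirit: compare the cubical homology of $\square^n$ with the singular homology of its geometric realization, and then observe that this realization is a contractible cube.

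First I would invoke the comparison already used in the proof of Lemma \ref{Computation of H(Ln)}, namely that for any cubical set $\mathrm{X}$ the (normalized) cubical homology $\mathrm{H}_\bullet(\mathrm{X},\Bbbk)$ is isomorphic to the singular homology of the geometric realization $|\mathrm{X}|$. It is crucial here that we work with the normalized complex $\mathrm{C}_\bullet$ rather than the unnormalized complex $\mathrm{Q}_\bullet$: on a point every face map is the identity, so the unnormalized differential $d = \sum_{i}(-1)^{i+1}(d_{i,1}-d_{i,0})$ vanishes and $\mathrm{H}^u_\bullet$ would be $\Bbbk$ in every degree, whereas quotienting by the degeneracies $\mathrm{D}_\bullet$ kills the higher generators and restores the expected homotopy type.

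Next I would compute $|\square^n|$. Since $\square^n = \mathrm{Hom}_\square(-,\square_n)$ is representable, the defining coend $|\square^n| = \int^m \mathrm{Hom}_\square(\square_m,\square_n) \times |\square^m|$ collapses by the co-Yoneda lemma to the value of the cocubical space $|\square|$ at $\square_n$, that is $|\square^n| = [0,1]^n$, the classical $n$-cube.

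Finally, $[0,1]^n$ is contractible, so $\mathrm{H}_p(\square^n,\Bbbk) \cong \mathrm{H}_p^{\mathrm{sing}}([0,1]^n,\Bbbk)$ equals $\Bbbk$ for $p=0$ and $0$ for $p>0$, as claimed. The only non-formal step is the first one, the identification of normalized cubical homology with the singular homology of the realization, so that is where the genuine content lies; the co-Yoneda collapse and the contractibility of a cube are then immediate. One could instead give a purely algebraic argument by exhibiting an explicit contracting homotopy on $\mathrm{C}_\bullet(\square^n,\Bbbk)$, but the topological route is shorter and matches the treatment of the simplicial case.
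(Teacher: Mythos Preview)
Your proof is correct and follows exactly the same route as the paper: identify the cubical homology of $\square^n$ with the singular homology of its geometric realization $[0,1]^n$, and conclude by contractibility of the cube. The paper's proof is just the terse two-line version of what you wrote; your extra remarks on co-Yoneda and on the necessity of normalization are accurate elaborations but not additional steps.
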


\begin{proof}
Let $n \in \mathbb{N}$ be fixed. The cubical homology of $\square^n$ is isomorphic to the singular homology of its geometric realization $[0,1]^n$. The topological space $[0,1]^n$ is contractible so $\mathrm{H}_{\bullet}(\square^n,\Bbbk) = 0$.
\end{proof}

\subsection{A differential graded coassociative coalgebra structure on the chain complex of a cubical set} Let $\mathrm{X}$ be a cubical set. Let us define a degree $0$ map $\Delta(\mathrm{X})$ from $\mathrm{C}_{\bullet}(\mathrm{X})$ to $\mathrm{C}_{\bullet}(\mathrm{X}) \otimes \mathrm{C}_{\bullet}(\mathrm{X})$ by the following formula :
\begin{align*}\displaystyle
\Delta(\mathrm{X})_n(x) := \bigoplus_{p+q = n} \sum_{\sigma \in \mathrm{Sh}_p,q} \epsilon(\sigma) \, (d_{\sigma(p+1),0}\cdots d_{\sigma(p+q),0})(x) \otimes (d_{\sigma(1),1}\cdots d_{\sigma(p),1})(x)
\end{align*}
for all $x \in \mathrm{X}_n, \, n \geq 1$ and $\Delta(\mathrm{X})_{0}(x) = x \otimes x$ for $x \in \mathrm{X}_0$. With enough stamina it is possible to check that this formula defines a chain complex morphism. A more conceptual way to prove that such a chain morphism exists and is unique up to homotopy is to use the method of acyclic models : Indeed, define $\Delta$ in dimension $0$ by the previous formula. It induces a map between homology groups $\Delta_0 : \mathrm{H}_{0}(X,\Bbbk) \to \mathrm{H}_0(X,\Bbbk)$. By induction we can extend this map to a unique up to homotopy morphism of chain complexes using the theorem of acyclic models. Indeed, by Proposition \ref{Proposition : Cn representable} the functor $\mathrm{C}_{n}$ is representable by $\square^{n}$ for all $n \in \mathbb{N}$, and by Lemma \ref{lemma : squaren is contractible} the homology groups $\mathrm{H}_{p}\big(\mathrm{C}_{\bullet}(\square^{n}) \otimes \mathrm{C}_{\bullet}(\square^{n})\big)$ vanish for all $p,n \in \mathbb{N}^*$. Then by  the theorem of acyclic models \ref{Theorem : acyclic models theorem 1} and \ref{Theorem : acyclic models theorem 2}, there exists a homotopy unique chain map $\Delta$ extending $\Delta_0$. 
  
\begin{theorem}\label{Theorem : commutative up to homotopy coalgebra structure on C(X)}
Let $X$ be a cubical set. Then $(\mathrm{C_{\bullet}(X,\Bbbk)},\Delta)$ is an homotopy coassociative and homotopy cocommutative coalgebra (in the category of chain complexes).
\end{theorem}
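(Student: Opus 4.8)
The plan is to reuse the acyclic models machinery exactly as in the proof of Theorem~\ref{Theorem : Zinbiel up to homotopy coalgebra structure on Leibniz homology}. The coproduct $\Delta$ has already been produced above as a homotopy-unique natural chain map extending its degree~$0$ value $\Delta_0(x) = x \otimes x$, so to establish homotopy coassociativity and homotopy cocommutativity it suffices, for each of the two relevant pairs of natural chain maps, to check that they coincide in low dimensions and then to invoke the acyclic models Theorem~\ref{Theorem : acyclic models theorem 2} to upgrade this coincidence to a homotopy. The two structural inputs I would lean on throughout are the representability of $\mathrm{C}_n$ by $\square^n$ (Proposition~\ref{Proposition : Cn representable}) and the acyclicity of $\mathrm{C}_{\bullet}(\square^n)$ (Lemma~\ref{lemma : squaren is contractible}).

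For coassociativity I would compare the two natural chain maps $(\Delta \otimes \mathrm{id}) \circ \Delta$ and $(\mathrm{id} \otimes \Delta) \circ \Delta$ from $\mathrm{C}_{\bullet}$ to $\mathrm{C}_{\bullet} \otimes \mathrm{C}_{\bullet} \otimes \mathrm{C}_{\bullet}$; both are chain maps since they are composites of the chain map $\Delta$. In dimension $0$ each of them sends $x$ to $x \otimes x \otimes x$, so they agree there and the zero homotopy serves as a homotopy in dimensions $< 1$. I would then apply Theorem~\ref{Theorem : acyclic models theorem 2} inductively, using that the source $\mathrm{C}_n$ is representable and that the target functor is acyclic on the models, to conclude that the two composites are homotopic.

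For cocommutativity I would compare $\Delta$ with $\tau \circ \Delta$, where $\tau$ is the Koszul symmetry isomorphism of chain complexes (itself a natural chain map, so $\tau \circ \Delta$ is again a chain map). Since $\tau(x \otimes x) = x \otimes x$ in dimension $0$, the two maps coincide in dimension $0$, and the identical acyclic models argument produces a homotopy $\Delta \simeq \tau \circ \Delta$. Together with the previous step this yields that $(\mathrm{C}_{\bullet}(\mathrm{X},\Bbbk),\Delta)$ is a homotopy coassociative and homotopy cocommutative coalgebra.

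The only genuinely non-formal point, which I would isolate as a short remark, is the acyclicity of the target functors on the models: I need $\mathrm{H}_p\big(\mathrm{C}_{\bullet}(\square^n)^{\otimes k}\big) = 0$ for all $p > 0$, for $k \in \{2,3\}$ and all $n \in \mathbb{N}$. This is where the argument could go wrong, but it follows from Lemma~\ref{lemma : squaren is contractible} together with the Künneth theorem, which carries no $\mathrm{Tor}$ correction terms because $\Bbbk$ is a field; hence the tensor powers have homology concentrated in degree $0$. Everything else is a formal consequence of Theorem~\ref{Theorem : acyclic models theorem 2} and of the fact that composites of natural chain maps and the symmetry $\tau$ are themselves natural chain maps.
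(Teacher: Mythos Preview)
Your proof is correct and follows essentially the same approach as the paper: both establish homotopy coassociativity and homotopy cocommutativity by observing that the relevant pairs of natural chain maps agree in dimension~$0$ and then invoking the acyclic models Theorem~\ref{Theorem : acyclic models theorem 2}, with the representability and acyclicity hypotheses supplied by Proposition~\ref{Proposition : Cn representable} and Lemma~\ref{lemma : squaren is contractible}. Your explicit K\"unneth remark justifying the acyclicity of the tensor powers on the models is a useful elaboration that the paper leaves implicit.
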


\begin{proof} The maps $(\mathrm{id} \otimes \Delta) \circ \Delta$ and $(\Delta \otimes \mathrm{id}) \circ \Delta$ are two maps from $\mathrm{C_{\bullet}}$ to $\mathrm{C_{\bullet} \otimes C_{\bullet} \otimes C_{\bullet}}$ which coincide in dimension $0$. Therefore by the theorem of acyclic models \ref{Theorem : acyclic models theorem 2} these two maps are homotopic. 
\vskip 0.2cm
The maps $\Delta$ and $\tau \circ  \Delta$ are two maps from $\mathrm{C_{\bullet}}$ to $\mathrm{C_{\bullet} \otimes C_{\bullet}}$ which coincide in dimension $0$. Therefore by the theorem of acyclic models \ref{Theorem : acyclic models theorem 2} these two maps are homotopic. 
\end{proof}

\begin{corollary} Let $\mathrm{X}$ be a cubical set and $\Bbbk$ be a field. Then $(\mathrm{H}_{\bullet}(\mathrm{X},\Bbbk),\Delta)$ is a coassociative and cocommutative coalgebra (in the category of graded vector space). 
\end{corollary}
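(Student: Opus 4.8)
The plan is to obtain the corollary by pushing the chain-level structure furnished by Theorem~\ref{Theorem : commutative up to homotopy coalgebra structure on C(X)} through the homology functor, using crucially that $\Bbbk$ is a field so that the homology of a tensor product splits.

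First I would note that, by construction, the coproduct $\Delta$ is a morphism of chain complexes $\mathrm{C}_{\bullet}(\mathrm{X}) \to \mathrm{C}_{\bullet}(\mathrm{X}) \otimes \mathrm{C}_{\bullet}(\mathrm{X})$, so applying the homology functor yields a graded linear map $\mathrm{H}_{\bullet}(\Delta) : \mathrm{H}_{\bullet}(\mathrm{X},\Bbbk) \to \mathrm{H}_{\bullet}\big(\mathrm{C}_{\bullet}(\mathrm{X}) \otimes \mathrm{C}_{\bullet}(\mathrm{X})\big)$. Since $\Bbbk$ is a field, each $\mathrm{C}_n(\mathrm{X})$ is a free $\Bbbk$-module and the Künneth theorem provides a natural isomorphism $\mathrm{H}_{\bullet}\big(\mathrm{C}_{\bullet}(\mathrm{X}) \otimes \mathrm{C}_{\bullet}(\mathrm{X})\big) \simeq \mathrm{H}_{\bullet}(\mathrm{X},\Bbbk) \otimes \mathrm{H}_{\bullet}(\mathrm{X},\Bbbk)$, with no Tor correction terms. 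Composing these two maps defines the coproduct on homology
\begin{align*}
\Delta : \mathrm{H}_{\bullet}(\mathrm{X},\Bbbk) \to \mathrm{H}_{\bullet}(\mathrm{X},\Bbbk) \otimes \mathrm{H}_{\bullet}(\mathrm{X},\Bbbk).
\end{align*}

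Next I would invoke the elementary fact that homotopic chain maps induce equal maps in homology. Applying the same Künneth argument to the threefold tensor product identifies $\mathrm{H}_{\bullet}\big(\mathrm{C}_{\bullet}(\mathrm{X}) \otimes \mathrm{C}_{\bullet}(\mathrm{X}) \otimes \mathrm{C}_{\bullet}(\mathrm{X})\big)$ with $\mathrm{H}_{\bullet}(\mathrm{X},\Bbbk)^{\otimes 3}$, and under this identification the homology classes of the chain maps $(\mathrm{id} \otimes \Delta) \circ \Delta$ and $(\Delta \otimes \mathrm{id}) \circ \Delta$ correspond to $(\mathrm{id} \otimes \Delta) \circ \Delta$ and $(\Delta \otimes \mathrm{id}) \circ \Delta$ on $\mathrm{H}_{\bullet}(\mathrm{X},\Bbbk)$. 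By Theorem~\ref{Theorem : commutative up to homotopy coalgebra structure on C(X)} these two chain maps are homotopic, hence agree in homology, which is precisely coassociativity. Likewise the theorem gives $\Delta \simeq \tau \circ \Delta$ at the chain level, and since the symmetry $\tau$ is compatible with the Künneth isomorphism, passing to homology yields $\Delta = \tau \circ \Delta$, that is, cocommutativity.

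The verifications that remain are the naturality of the Künneth isomorphism and its compatibility with both the associativity constraint and the symmetry $\tau$; these are standard. The only point deserving genuine care is the sign bookkeeping: one must check that the Koszul sign appearing in the graded symmetry $\tau$ at the chain level matches the sign built into the Künneth isomorphism, so that chain-level cocommutativity up to homotopy descends to strict cocommutativity on $\mathrm{H}_{\bullet}(\mathrm{X},\Bbbk)$. Once this is in place the corollary follows immediately.
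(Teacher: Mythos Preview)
Your argument is correct and is precisely the standard passage from a homotopy coalgebra structure on chains to a strict coalgebra structure on homology. The paper itself offers no proof of this corollary; it is stated immediately after Theorem~\ref{Theorem : commutative up to homotopy coalgebra structure on C(X)} and left as an evident consequence, so your write-up simply spells out the implicit K\"unneth-plus-homotopy-invariance reasoning the author had in mind.
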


\bibliographystyle{alpha}
\bibliography{bibliographie}

\begin{thebibliography}{Dan11}

\bibitem[Bro94]{Brown_Cohomology_of_groups}
Kenneth~S. Brown.
\newblock {\em Cohomology of groups}, volume~87 of {\em Graduate Texts in
  Mathematics}.
\newblock Springer-Verlag, New York, 1994.
\newblock Corrected reprint of the 1982 original.

\bibitem[Bur10]{Burgunder_graph_complex_and_Leibniz_homology}
Emily Burgunder.
\newblock A symmetric version of {K}ontsevich graph complex and {L}eibniz
  homology.
\newblock {\em J. Lie Theory}, 20(1):127--165, 2010.

\bibitem[CE56]{CartanEilenberg}
Henri Cartan and Samuel Eilenberg.
\newblock {\em Homological algebra}.
\newblock Princeton University Press, Princeton, NJ, 1956.

\bibitem[Cov13]{CovezIntegration}
Simon Covez.
\newblock The local integration of {L}eibniz algebras.
\newblock {\em Ann. Inst. Fourier (Grenoble)}, 63(1):1--35, 2013.

\bibitem[Dan11]{Dancso}
Zsuzsanna Dancso.
\newblock {\em On a universal finite type invariant of knotted trivalent
  graphs}.
\newblock ProQuest LLC, Ann Arbor, MI, 2011.
\newblock Thesis (Ph.D.)--University of Toronto (Canada).

\bibitem[EM53]{EilenbergMacLane}
Samuel Eilenberg and Saunders MacLane.
\newblock Acyclic models.
\newblock {\em Amer. J. Math.}, 75:189--199, 1953.

\bibitem[Joy82]{Joyce_A_classifying_invariant_of_knots}
David Joyce.
\newblock A classifying invariant of knots, the knot quandle.
\newblock {\em J. Pure Appl. Algebra}, 23(1):37--65, 1982.

\bibitem[Kin07]{Kinyon}
Michael~K. Kinyon.
\newblock Leibniz algebras, {L}ie racks, and digroups.
\newblock {\em J. Lie Theory}, 17(1):99--114, 2007.

\bibitem[Lod93]{LodayEns}
Jean-Louis Loday.
\newblock Une version non commutative des alg\`ebres de {L}ie: les alg\`ebres
  de {L}eibniz.
\newblock In {\em R.{C}.{P}.\ 25, {V}ol.\ 44 ({F}rench) ({S}trasbourg, 1992)},
  volume 1993/41 of {\em Pr\'epubl. Inst. Rech. Math. Av.}, pages 127--151.
  Univ. Louis Pasteur, Strasbourg, 1993.

\bibitem[Lod98]{LodayCyclic}
Jean-Louis Loday.
\newblock {\em Cyclic homology}, volume 301 of {\em Grundlehren der
  Mathematischen Wissenschaften [Fundamental Principles of Mathematical
  Sciences]}.
\newblock Springer-Verlag, Berlin, second edition, 1998.
\newblock Appendix E by Mar{\'{\i}}a O. Ronco, Chapter 13 by the author in
  collaboration with Teimuraz Pirashvili.

\bibitem[Lod03]{LodayConjectural}
Jean-Louis Loday.
\newblock Algebraic {$K$}-theory and the conjectural {L}eibniz {$K$}-theory.
\newblock {\em $K$-Theory}, 30(2):105--127, 2003.
\newblock Special issue in honor of Hyman Bass on his seventieth birthday. Part
  II.

\bibitem[LV12]{LodayVallette}
Jean-Louis Loday and Bruno Vallette.
\newblock {\em Algebraic operads}, volume 346 of {\em Grundlehren der
  Mathematischen Wissenschaften [Fundamental Principles of Mathematical
  Sciences]}.
\newblock Springer, Heidelberg, 2012.

\bibitem[Ros94]{Rosenberg_Algebraic_K_Theory}
Jonathan Rosenberg.
\newblock {\em Algebraic {$K$}-theory and its applications}, volume 147 of {\em
  Graduate Texts in Mathematics}.
\newblock Springer-Verlag, New York, 1994.

\end{thebibliography}

\end{document}